\let\tilde=\widetilde
\newcommand{\field}[1]{\mathbb{#1}}
\newcommand{\R}{\field{R}}
\newcommand{\PP}{\field{P}}
\def\der^#1_#2{\frac{\partial^{#1}}{\partial {#2}^{#1}}}
\theoremstyle{plain}
\newtheorem{theorem}{Theorem}
\newtheorem{lemma}{Lemma}
\newtheorem{prop}{Proposition}
\newtheorem{assumption}{Assumption}{\bf}{\rm}
\theoremstyle{remark}
\newtheorem{remark}{Remark}
{\smallskip}%
\newcommand{\E}{\field{E}}
\begin{document}


\title{Statistical estimation of a growth-fragmentation model observed on a genealogical tree} 


\author{Marie Doumic\thanks{INRIA Rocquencourt, projet BANG, Domaine de Voluceau, BP 105, 781153 Rocquencourt, France and Laboratoire J.L. Lions, CNRS-UMR 7598, 4 place Jussieu 75005 Paris, France {\tt email}: marie.doumic-jauffret@inria.fr}
\and Marc Hoffmann\thanks{Universit\'e Paris-Dauphine, CNRS-UMR 7534,
Place du mar\'echal De Lattre de Tassigny
75775 Paris Cedex 16, France. Corresponding author. {\tt email}: hoffmann@ceremade.dauphine.fr} 
 \and Nathalie Krell\thanks{Universit\'e de Rennes 1, CNRS-UMR 6625, Campus de Beaulieu, 35042 Rennes Cedex, France. {\tt email}: nathalie.krell@univ-rennes1.fr}
\and Lydia Robert\thanks{INRA, UMR1319, Micalis, F-78350 Jouy-en-Josas, France.}\;\,\thanks{AgroParisTech, UMR Micalis, F-78350 Jouy-en-Josas, France.
  {\tt email}: lydia.robert@upmc.fr}}
\date{}
\maketitle
\begin{abstract}
We raise the issue of estimating the division rate for a growing and dividing population 
modelled  by a piecewise deterministic Markov branching tree. Such models have broad applications, ranging from TCP/IP window size protocol to bacterial growth. Here, the individuals split into two offsprings at a division rate $B(x)$ that depends on their size $x$, whereas their size  grow exponentially in time, at a rate that exhibits variability. 
The mean empirical measure of the model satisfies a growth-fragmentation type equation, and we bridge the deterministic and probabilistic viewpoints. 
We then construct a nonparametric estimator of the division rate $B(x)$ based on the observation of the population over different sampling schemes of size $n$ on the genealogical tree. Our estimator nearly achieves the rate $n^{-s/(2s+1)}$ in squared-loss error asymptotically,  generalizing and improving on the rate $n^{-s/(2s+3)}$ obtained in \cite{DHRR, DPZ}  through indirect observation schemes.  Our method is consistently tested numerically and implemented on {\it Escherichia coli} data, which demonstrates its major interest for practical applications. 
\end{abstract}




{\small 
\noindent {\it Keywords:} Growth-fragmentation, cell division equation, Nonparametric estimation,  Markov chain on a tree.
\noindent {\it Mathematical Subject Classification:} 35A05, 35B40, 45C05, 45K05, 82D60, 92D25, 62G05, 62G20.\\}
\section{Introduction}
\subsection{Size-structured models and their inference} \label{size-strutured models}
Growth-fragmentation models and structured population equations  describe the temporal evolution of a population characterised by state variables such as age, size, growth, maturity, protein content and so on - see \cite{Metz,Pe} and the references therein. 
This field continues to grow in interest as its applications  appear to be wider and wider, ranging from the internet TCP/IP window size protocol \cite{Baccelli} to neuronal activity \cite{PPS3}, protein polymerization \cite{Engler}, cell division cycle \cite{Banks1}, phase transitions \cite{NP4}, parasite proliferation \cite{Bansaye} etc. 

In order to  quantitatively fit experimental observations and thus validate the relevance of the models, developing new and well-adapted statistical methods appears to be one of the major challenge for the coming years.
A paradigmatic example, which can serve both as a toy model and a proof of concept for the methodology we develop here,  is given by the growth-fragmentation or size-structured cell division equation \cite{PR}. When applied to the evolution of a bacterial population it reads
\begin{equation} \label{transport-fragmentation eq}
\left\{
\begin{array}{l}
\partial_t n(t,x) + \tau \partial_x\big(x\, n(t,x)\big) + B(x)n(t,x) = 4B(2x)n(t,2x) \\ \\
n(0,x)= n^{(0)}(x), x \geq 0,
\end{array}
\right.
\end{equation}
and it quantifies the concentration $n(t,x)$ of individuals (cells) having size $x$ (the state variable) at time $t$.
A common stochastic mechanism for every single  cell is attached to Equation~\eqref{transport-fragmentation eq}\-:
\begin{enumerate}
\item The size $x = x(t)$ of a cell at time $t$ evolves exponentially according to the deterministic evolution $dx(t)=\tau x(t)dt$, where $\tau >0$ is
the growth rate of each cell, that quantifies their ability to ingest a common nutrient.
\item Each cell splits into two offsprings according to a division rate $B(x)$ that depends on its current size $x$.
\item At division, a cell of size $x$ gives birth to two offsprings of size $x/2$ each, what is called \emph{binary fission}.
\end{enumerate}
Model \eqref{transport-fragmentation eq} is thus entirely determined by the parameters $\big(\tau, B(x), x \in [0,\infty)\big)$. Typically, the growth rate $\tau$ is assumed to be known or guessed \cite{DMZ}, and thus inference about \eqref{transport-fragmentation eq} mainly concerns the estimation of the division rate $B(x)$ that has to be taken from a nonparametric perspective.\\ 

By use of the general relative entropy principle, Michel, Mischler and Perthame showed that the approximation $n(t,x) e^{-\lambda_0 t} \approx N(x)$ is valid \cite{MMP}, with $\lambda_0 >0$, and where $(\lambda_0,N)$ is the dominant eigenpair related to the corresponding eigenvalue problem, see  \cite{M1, Pe, DG,LP,CCM, BCG}.
The ``stationary" density $N(x)$ of typical cells after some time has elapsed enables to recover $(B(x), x \in {\mathcal D}\big)$ for a compact ${\mathcal D} \subset (0,\infty)$ by means of the regularisation of an inverse problem of ill-posedness degree 1. From a deterministic perspective, this is carried out in \cite{PZ,DPZ, DT}.
From a statistical inference perspective, if an $n$-sample of the distribution $N(x)$ is observed and if $B(x)$ has smoothness $s>0$ in a Sobolev sense, it is proved in \cite{DHRR} that $B(x)$ can be recovered in squared-error loss over compact sets with a rate of convergence $n^{-s/(2s+3)}$.
Both deterministic and stochastic methodology of \cite{DPZ} and \cite{DHRR} are motivated by experimental designs and data such as in \cite{Kubitschek, DMZ}. However, they do not take into account the following two important aspects:
\begin{itemize}
\item Bacterial growth exhibits variations in the individual growth rate $\tau$ as demonstrated for instance in \cite{Sturm}. One would like to incorporate variability in the growth rate within the system at the level of a single cell. This requires to modify Model~\eqref{transport-fragmentation eq}.
\item Recent evolution of experimental technology enables to track the whole genealogy of cell populations (along prescribed lines of descendants for instance), affording the observation of other state variables such as size at division, lifetime of a single individual and so on \cite{Robert}. 
Making the best possible use of such measures is of great potential impact, and needs a complementary approach. 
\end{itemize}
The availability of observation schemes at the level of cell individuals suggests an enhancement of the statistical inference of $\big(B(x), x \in {\mathcal D}\big)$,  enabling to improve on the rates of convergence obtained by indirect measurements such as in \cite{DHRR, DPZ}. 
This is the purpose of the present paper. We focus on bacterial growth, for which we  apply our method on experimental observations. This serves as a proof of concept for the relevance of our modelling and statistical methodology, which could adapt to other application fields and growth-fragmentation types.

\subsection{Results of the paper}
\subsubsection*{Statistical setting}
Let
$${\mathcal U} = \bigcup_{k=0}^\infty \{0,1\}^k$$ 
denote the binary genealogical tree (with $\{0,1\}^0:=\{\emptyset\}$). We identify each node $u \in {\mathcal U}$ with a cell that has a size at birth $\xi_u$ and a lifetime $\zeta_u$. 
In the paper, we consider the problem of estimating $\big(B(x), x \in [0,\infty)\big)$ over compact sets of $(0,\infty)$. Our inference procedure is based on the observation of 
\begin{equation} \label{micro dataset}
\big((\xi_u, \zeta_u), u \in {\mathcal U}_n\big).
\end{equation}
where ${\mathcal U}_n \subset {\mathcal U}$ denotes a connected subset of size $n$ containing the root $u  = \emptyset$.
Asymptotics are taken as $n\rightarrow \infty$. Two important observation schemes are considered: the sparse tree case, when we follow the system along a given branch with $n$ individuals, and the full tree case, where we follow the evolution of the whole binary tree up to the $N_n$-th generation, with $N_n \approx \log_2 n$.
In this setting, we are able to generalise Model \eqref{transport-fragmentation eq} and allow  the growth rate $\tau$ to vary with each cell $u \in {\mathcal U}$.
We assume that a given cell $u$ has a random growth rate $\tau_u=v \in {\mathcal E} \subset (0,\infty)$ (later constrained to live on a compact set). Moreover, this value $v$ is inherited from the growth rate $v'$ of its parent according to a distribution $\rho(v',dv)$.   Since a cell splits into two offsprings of the same size, letting $u^-$ denote the parent of $u$, we have the fundamental relationship 
\begin{equation} \label{fundamental}
2\,\xi_u = \xi_{u^-}\exp\big(\tau_{u^-} \zeta_{u^-}\big)
\end{equation}
that enables to recover the growth rate $\tau_u$ of each individual in ${\mathcal U}_n$ since ${\mathcal U}_n$ is connected by assumption, possibly leaving out the last generation of observed individuals, but this has asymptotically no effect on a large sample size approach.

\subsubsection*{Variability in growth rate}

In the case where the growth rate can vary for each cell, the density $n(t,x)$ of cells of size $x$ at time $t$ does not follow Eq.~\eqref{transport-fragmentation eq} anymore and an extended framework needs to be considered. To that end, we structure the system with an additional variable $\tau_u=v,$ which represents the growth rate and depends on  each individual cell $u$. We construct in Section \ref{microscopic model} a branching Markov chain $\big((\xi_u, \tau_u), u \in {\mathcal U}\big)$ that incorporates variability for the growth rate in the mechanism described in Section \ref{size-strutured models}. 
Equivalently to the genealogical tree, the system may be described in continuous time by a piecewise deterministic Markov process
$$\big(X(t),V(t)\big) = \Big(\big(X_1(t),V_1(t)\big),\big(X_2(t),V_2(t)\big),\ldots\Big),$$
which models the process of sizes and growth rates of the living particles in the system at time $t$, with value in $\bigcup_{k=0}^\infty {\mathcal S}^k$, where ${\mathcal S} = [0,\infty)\times {\mathcal E}$ is the state space of size times growth rate.  Stochastic systems of this kind that correspond to branching Markov chains are fairly well known, both from a theoretical angle and in applications; a selected list of contributions is \cite{BDMV, Cloez, MT} and the references therein.

By fragmentation techniques inspired by Bertoin \cite{bertoin}, see also Haas \cite{Haas}, we relate the process $(X,V)$ to a growth-fragmentation equation as follows. Define
$$\langle n(t,\cdot), \varphi\rangle = \E\Big[\sum_{i = 1}^\infty \varphi\big(X_i(t), V_i(t)\big)\Big]$$
as the expectation of the empirical measure of the process $(X,V)$ over smooth test functions defined on ${\mathcal S}$. We prove in Theorem \ref{sol transport general} that, under appropriate regularity conditions, the measure $n(t,\cdot)$ that we identify with the temporal evolution of the density $n(t,x,v)$ of cells having size $x$ and growth rate $v$ at time $t$ is governed (in a weak sense\footnote{
For every $t\geq 0$, we actually have a Radon measure $n(t,dx ,dv)$ on ${\mathcal S}=[0,\infty)\times {\mathcal E}$:
If $\varphi(x,v)$ is a function defined on ${\mathcal S}$, we define $\langle n(t,\cdot), \varphi\rangle = \int_{{\mathcal S}}\varphi(x,v)n(t,dx, dv)$ whenever the integral is meaningful. 
Thus \eqref{transport variabilite} has the following sense: for every sufficiently smooth test function $\varphi$ with compact support in ${\mathcal E}$, we have
\begin{align*}
&\int_{{\mathcal S}}\partial_t n(t,dx,dv)\varphi(x,v)-\,vx n(t,dx, dv)\partial_x\varphi(x,v)+B(x)n(t,dx,dv)\varphi(x,v) \\
 = & \;4\int_{{\mathcal S}}\big(B(2x)\int_{{\mathcal E}} \rho(v',dv)n(t,2dx,dv')\varphi(x,v)\big).
 \end{align*}
 }) by 
\begin{equation} \label{transport variabilite}
\left\{
\begin{array}{rl}
& \partial_t n(t,x,v) + v\, \partial_x\big(x\, n(t,x, v)\big)+B(x)n(t,x,v) \\ \\
 & =\; 4B(2x)\int_{{\mathcal E}} \rho(v',v)n(t,2x,dv'), \\ \\
 & n(0,x,v)= n^{(0)}(x,v), x \geq 0.
\end{array}
\right.
\end{equation}
If we assume a constant growth rate $\tau>0$, we then take $\rho(v', dv) = \delta_{\tau}(dv)$ (where $\delta$ denotes the Dirac mass) and we retrieve the standard growth-fragmentation equation \eqref{transport-fragmentation eq}. The proof of Theorem \ref{sol transport general} is obtained via a so-called many-to-one formula, established in Proposition \ref{many-to-one} in Section \ref{a many-to-one formula via a tagged branch}. Indeed, thanks to the branching property of the system, it is possible to relate the behaviour of additive functionals like the mean empirical measure to the behaviour of a so-called tagged cell (like a tagged fragment in fragmentation process), that consists in following the behaviour of a single line of descendants along a branch where each node is picked at random, according to a uniform distribution. This approach, inspired by fragmentation techniques, is quite specific to our model and enables to obtain a relatively direct proof of Theorem \ref{transport variabilite}. 
\subsubsection*{Nonparametric estimation of the growth rate}
In Section \ref{statistical analysis} we take over the problem of estimating $(B(x), x \in {\mathcal D})$ for some compact ${\mathcal D} \subset (0,\infty)$. We assume we have data of the form \eqref{micro dataset}, and that the mean evolution of the system is governed by \eqref{transport variabilite}. The growth rate kernel $\rho$ is unknown and treated as a nuisance parameter.
A fundamental object is the transition kernel
$$
{\mathcal P}_{B}(\boldsymbol{x},d\boldsymbol{x'}) = \PP\big((\xi_u,\tau_u)\in d\boldsymbol{x'}\big|\,(\xi_{u^-},\tau_{u^-})=\boldsymbol{x}\big)
$$
of the size and growth rate distribution $(\xi_u,\tau_u)$ at the birth of a descendant $u \in {\mathcal U}$, given the size of birth and growth rate of its parent $(\xi_{u^-},\tau_{u^-})$. We define in Section \ref{upper rates} a class of division rates and growth rate kernels such that if $(B,\rho)$ belongs to this class,
then the transition ${\mathcal P}_{B}$ is geometrically ergodic and has a unique invariant measure $\nu_B(d\boldsymbol{x}) = \nu_B(x,dv)dx$. 
From the invariant measure equation
$$\nu_B{\mathcal P}_B=\nu_B$$
we obtain in Proposition \ref{rep B via inv} the explicit representation
\begin{equation} \label{rep B anticipate}
B(x)=\frac{x}{2}\frac{\nu_B(x/2)}{\E_{\nu_B}\Big[\frac{1}{\tau_{u^-}}{\bf 1}_{\{\xi_{u^-} \leq x,\;\xi_u \geq x/2\}}\Big]}.
\end{equation}
where $\nu_B(x) = \int_{{\mathcal E}}\nu_B(x,dv)$ denotes the first marginal of the invariant distribution $\nu_B$. A strategy for constructing and estimator $B$ consists in replacing the right-hand size of \eqref{rep B anticipate} by its empirical counterpart, the numerator being estimated via a kernel estimator of the first maginal of $\nu_B(d\boldsymbol{x})$. Under local H\" older smoothness assumption on $B$ of order $s>0$, we prove in Theorem \ref{upper bound} that for a suitable choice of bandwidth in the estimation of the invariant density, our estimator achieves the rate $n^{-s/(2s+1)}$ in squared-error loss over appropriate compact sets ${\mathcal D} \subset (0,\infty)$, up to an inessential logarithmic term when the full tree observation scheme is considered. We see in particular that we improve on the rate obtained in \cite{DHRR}. Our result quantifies the improvement obtained when estimating $B(x)$ from data $\big((\xi_u, \zeta_u), u \in {\mathcal U}_n\big)$, as opposed to overall measurements of the system after some time has elapsed as in \cite{DHRR}. We provide a quantitative argument based on the analysis of a PDE that explains the reduction of ill-posedness achieved by our method over \cite{DHRR} in Section \ref{num}.
\\

In order to obtain the upper bound of Theorem \ref{upper bound}, a major technical difficulty is that we need to establish uniform rates of convergence of the empirical counterparts to their limits in the numerator and denominator of \eqref{rep B anticipate} when the data are spread along a binary tree. This can be done via covariance inequalities that exploit the fact that the transition ${\mathcal P}_{B}$ is geometrically ergodic (Proposition \ref{prop transition}) using standard Markov techniques, see \cite{MT, B}. The associated chain is however not reversible, and this yields an extraneous difficulty: the decay of the correlations between $\varphi(\xi_u, \tau_u)$ and $\varphi(\xi_v, \tau_v)$ for two nodes $u,v\in {\mathcal U}_n$ are expressed in terms of the sup-norm of $\varphi$, whenever $|\varphi({\boldsymbol{x}})| \leq {\mathbb V}(\boldsymbol{x})$ is dominated by a certain Lyapunov function 
${\mathbb V}$ for the transition ${\mathcal P}_{B}$. However, the typical functions $\varphi$ we use are kernels that depend on $n$ and that are not uniformly bounded in sup-norm as $n \rightarrow \infty$. This partly explains the relative length of the technical Sections \ref{covariance inequalities} and \ref{rate of convergence for the empirical measure}.

\subsection{Organisation of the paper}
In Section \ref{microscopic model}, we construct the model $\big((\xi_u, \tau_u), u \in {\mathcal U}\big)$ of sizes and growth rates 
of the cells as a Markov chain along the genealogical tree. The discrete model can be embedded into a continuous time piecewise deterministic Markov process $(X,V)$
of sizes and growth rates of the cells present at any time within the system. In Theorem \ref{sol transport general} we explicit the relation between the mean empirical measure of $(X,V)$ and the growth-fragmentation type equation \ref{transport variabilite}. In Section \ref{statistical analysis}, we explicitly construct an estimator $\widehat B_n$ of $B$ by means of the representation given by \eqref{rep B anticipate} in Section \ref{estimation of the division rate}. Two observation schemes are considered and discussed in Section \ref{two observation schemes}, whether we consider data along a single branch (the sparse tree case) or along the whole genealogy (the full tree case). The specific assumptions and the class of admissible division rates $B$ and growth rate kernels $\rho$ are discussed in Section \ref{upper rates}, and an upper bound for $\widehat B_n$ in squared-error loss is given in our main Theorem \ref{upper bound}. Section \ref{numerical implementation} shows and discusses the numerical implementation of our method on simulated data. In particular, ignoring the variability in the reconstruction dramatically deterioriates the accuracy of estimation of $B$. We also explain from a deterministic point perspective the rate improvement of our method compared with \cite{DHRR} by means of a PDE analysis argument in Section \ref{num}. The parameters are inspired from real data experiments on {\it Escherichia coli} cell cultures. Section \ref{proofs} is devoted to the proofs. 
\section{A Markov model on a tree} \label{microscopic model}
\subsection{The genealogical construction}
Recall that 
${\mathcal U} := \bigcup_{n=0}^\infty \{0,1\}^n$ 
(with $\{0,1\}^0:=\{\emptyset\}$) denotes the infinite binary genealogical tree. Each node $u \in {\mathcal U}$ is identified with a cell of the population and has a mark 
$$(\xi_u, b_u, \zeta_u, \tau_u),$$
where $\xi_u$ is the size at birth, $\tau_u$ the growth rate, $b_u$ the birthtime and $\zeta_u$ the lifetime of $u$. The evolution $\big(\xi_t^{u}, t \in [b_u,b_u+\zeta_u)\big)$ of the size of $u$ during its lifetime is governed by 
\begin{equation} \label{piecewise deterministic}
\xi_t^u=\xi_u \exp\big(\tau_u (t-b_u)\big)\;\;\text{for}\;\;t\in [b_u,b_u+\zeta_u).
\end{equation}
Each cell splits into two offsprings of the same size according to a division rate $B(x)$ for $x\in (0,\infty)$. Equivalently 
\begin{equation} \label{def div rate}
\PP\big(\zeta_{u}\in [t,t+dt]\,\big|\,\zeta_{u}\geq t,\xi_{u}=x, \tau_u=v\big) = B\big(x\exp(v t)\big)dt.
\end{equation}
At division, a cell splits into two offsprings of the same size. If $u^-$ denotes the parent of $u$, we thus have 
\begin{equation} \label{fundamental}
2\,\xi_u = \xi_{u^-}\exp\big(\tau_{u^-} \zeta_{u^-}\big)
\end{equation}
Finally, the growth rate $\tau_u$ of $u$ is inherited from its parent $\tau_{u^-}$ according to a Markov kernel
\begin{equation} \label{markov heritage}
\rho(v,dv')=\PP(\tau_u\in dv'\,|\,\tau_{u^-}=v),
\end{equation}
where $v >0$ and $\rho(v,dv')$ is a probability measure on $(0,\infty)$ for each $v>0$.
Eq. \eqref{piecewise deterministic}, \eqref{def div rate}, \eqref{fundamental} and \eqref{markov heritage} completely determine the dynamics of the  model $\big((\xi_u, \tau_u), u \in {\mathcal U}\big)$, as a Markov chain on a tree, given an additional initial condition $(\xi_\emptyset, \tau_\emptyset)$ on the root. 
The chain is embedded into a piecewise deterministic continuous Markov process thanks to \eqref{piecewise deterministic} by setting
$$(\xi^u_t, \tau_t^u) =\big(\xi_u \exp\big(\tau_u( t-b_u)\big), \tau_u\big)\;\;\text{for}\;\;t \in [b_u, b_u+\zeta_u)$$
and $(0,0)$ otherwise.
Define
$$\big(X(t),V(t)\big) = \Big(\big(X_1(t),V_1(t)\big),\big(X_2(t),V_2(t)\big),\ldots\Big)$$
as the process of sizes and growth rates of the living particles in the system at time $t$. 
We have an identity between point measures
\begin{equation} \label{identity point measures}
\sum_{i = 1}^\infty {\bf 1}_{\{X_i(t)>0\}}\delta_{(X_i(t),V_i(t))} = \sum_{u \in {\mathcal U}}{\bf 1}_{\{b_u \leq t < b_u +\zeta_u\}}\delta_{(\xi_t^u,\tau_t^u)}
\end{equation}
where $\delta$ denotes the Dirac mass.
In the sequel, the following basic assumption is in force.
\begin{assumption}[Basic assumption on $B$ and $\rho$] \label{basic assumption}
The division rate $x \leadsto B(x)$ is continuous. We have $B(0)=0$ and 
$\int^\infty x^{-1}B(x)dx=\infty$. The Markov kernel $\rho(v,dv')$ is defined
on a compact set ${\mathcal E} \subset (0,\infty)$.
\end{assumption}
\begin{prop} \label{existence processus}
Work under Assumption \ref{basic assumption}. The law of 
$$\big((X(t),V(t)), t \geq 0\big)\;\;\text{or}\;\;\big((\xi_u, \tau_u), u \in {\mathcal U}\big)\;\;\text{or}\;\;\big((\xi_t^u, \zeta_t^u), t \geq 0, u \in {\mathcal U}\big)$$ is well-defined on an appropriate probability space with almost-surely no accumulation of jumps.
\end{prop}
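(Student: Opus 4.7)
The plan is to build the full marked genealogy $((\xi_u,\tau_u,b_u,\zeta_u))_{u\in\mathcal{U}}$ recursively on the countable binary tree by Ionescu-Tulcea, and then to recover the two other descriptions pathwise via \eqref{piecewise deterministic} and \eqref{identity point measures}. Starting from $b_\emptyset:=0$ and an arbitrary initial law on $(0,\infty)\times\mathcal{E}$ for $(\xi_\emptyset,\tau_\emptyset)$, given $(\xi_u,\tau_u)=(x,v)$ one generates $\zeta_u$ as the first jump of an inhomogeneous Poisson process with intensity $t\mapsto B(xe^{vt})$ and, given $\zeta_u$, defines the two offspring through \eqref{fundamental}-\eqref{markov heritage}: $\xi_{ui}:=xe^{v\zeta_u}/2$, $b_{ui}:=b_u+\zeta_u$, and $\tau_{u0},\tau_{u1}$ are independent samples of $\rho(v,\cdot)$.

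For this recipe to define an honest Markov kernel one needs $\zeta_u<\infty$ almost surely. Writing
$$
\PP\bigl(\zeta_u>t\mid(\xi_u,\tau_u)=(x,v)\bigr)=\exp\Bigl(-\int_0^t B(xe^{vs})\,ds\Bigr)=\exp\Bigl(-v^{-1}\int_x^{xe^{vt}}y^{-1}B(y)\,dy\Bigr)
$$
after the change of variables $y=xe^{vs}$, the right-hand side tends to $0$ as $t\to\infty$ since $v\geq\inf\mathcal{E}>0$ by compactness and $\int^\infty y^{-1}B(y)\,dy=\infty$ under Assumption~\ref{basic assumption}. Continuity of $B$ and measurability of $\rho$ make the one-step kernel measurable in the parent mark, so Ionescu-Tulcea delivers a consistent joint law on the countable product space indexed by $\mathcal{U}$. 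Setting $(\xi_t^u,\tau_t^u):=(\xi_u e^{\tau_u(t-b_u)},\tau_u)$ on $[b_u,b_u+\zeta_u)$ and $(0,0)$ elsewhere is then pathwise measurable.

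The main obstacle is the non-explosion needed so that the left-hand side of \eqref{identity point measures} enumerates only finitely many living cells at each time $t$, i.e.\ $\sum_n\zeta_{u_n}=\infty$ almost surely along every infinite branch $(u_n)_{n\geq 0}$. I would argue by contradiction. On the event $\{\sum_n\zeta_{u_n}<T\}$, a short induction using $\tau_u\leq\tau_{\max}:=\sup\mathcal{E}<\infty$ bounds every size along the branch by $M:=2\xi_\emptyset e^{\tau_{\max}T}$, and continuity of $B$ gives $B_M:=\sup_{[0,M]}B<\infty$. Realizing the lifetimes through the standard coupling $\zeta_{u_n}=\inf\{t:\int_0^t B(\xi_{u_n}e^{\tau_{u_n}s})\,ds\geq E_n\}$ for iid $\mathrm{Exp}(1)$ variables $E_n$, one obtains $\zeta_{u_n}\geq E_n/B_M$ on the event, so that $\sum_n\zeta_{u_n}\geq B_M^{-1}\sum_n E_n=+\infty$ almost surely, a contradiction. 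This yields a bona fide locally finite point process and completes the joint construction of all three descriptions on the same probability space.
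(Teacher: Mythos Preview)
Your construction is essentially the same as the paper's (Appendix~\ref{construction of the chain}): recursive definition of the marks via $\zeta_u=F_{\xi_u,\tau_u}^{-1}(E_u)$ with i.i.d.\ standard exponentials $E_u$, followed by \eqref{fundamental} and \eqref{markov heritage} for the offspring. The paper simply asserts ``One easily checks that Assumption~\ref{basic assumption} guarantees that the model is well defined''; you actually carry out the two checks (finiteness of each $\zeta_u$ and non-explosion), so your write-up is strictly more complete than the paper's.

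One minor technical point on the non-explosion step: your argument establishes, for each \emph{fixed} infinite branch, that $\PP(\sum_n\zeta_{u_n}<T)=0$. What $(X(t),V(t))\in\bigcup_k\mathcal{S}^k$ requires is that \emph{almost surely all} branches satisfy this, an uncountable intersection. This is easily closed with your own ingredients: conditionally on $\xi_\emptyset$, your inclusion $\{\sum_{k<N}\zeta_{u_k}<T\}\subset\{\sum_{k<N}E_{u_k}<B_M T\}$ and a union bound over the $2^N$ nodes at depth $N$ give
\[
\PP\bigl(\exists\,|u|=N:\ b_u<T\bigr)\ \le\ 2^N\,\PP\bigl(\mathrm{Gamma}(N,1)<B_M T\bigr)\ \le\ 2^N e^{-B_M T}\frac{(B_M T)^N}{N!}\ \longrightarrow\ 0,
\]
so by K\"onig's lemma $\{u:b_u<T\}$ is a.s.\ finite. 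The paper does not address this at all, so the point is not a discrepancy with the reference proof but a refinement of your own.
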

If $\mu$ is a probability measure on the state space ${\mathcal S} = [0,\infty)\times {\mathcal E}$, we shall denote indifferently by $\PP_\mu$ the law of any of the three processes above 
where the root $(\xi_{\emptyset}, \tau_{\emptyset})$ has distribution $\mu$. The construction is classical (see for instance \cite{bertoin} and the references therein) and is outlined in Appendix \ref{construction of the chain}.
\subsection{Behaviour of the mean empirical measure}
Denote by ${\mathcal C}_0^1({\mathcal S})$ the set of real-valued test functions with compact support in the interior of ${\mathcal S}$. 
\begin{theorem}[Behaviour of the mean empirical measure] \label{sol transport general}
Work under Assumption \ref{basic assumption}. Let $\mu$ be a probability distribution on ${\mathcal S}$. Define the distribution $n(t,dx,dv)$ by
$$\langle n(t,\cdot),\varphi \rangle = \E_{\mu}\Big[\sum_{i = 1}^\infty\varphi\big(X_i(t), V_i(t)\big)\Big]\;\;\text{for every}\;\;\varphi \in {\mathcal C}^1_0({\mathcal S}).$$
Then $n(t,\cdot)$ solves (in a weak sense)
$$
\left\{
\begin{array}{rl}
& \partial_t n(t,x,v) + v\, \partial_x\big(x n(t,x, v)\big)+B(x)n(t,x,v) \\ \\
 & =\; 4B(2x)\int_{{\mathcal E}} \rho(v',v)n(t,2x,dv'), \\ \\
 & n(0,x,v)= n^{(0)}(x,v), x \geq 0.
\end{array}
\right.
$$
with initial condition $n^{(0)}(dx,dv) = \mu(dx,dv)$. 
\end{theorem}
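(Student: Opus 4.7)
The strategy is to test the measure $n(t,dx,dv)$ against arbitrary $\varphi\in\mathcal{C}_0^1(\mathcal{S})$, to compute the time derivative of $t\mapsto \langle n(t,\cdot),\varphi\rangle$ by means of the infinitesimal generator of the branching dynamics, and then to recognise the resulting identity as exactly the weak formulation of \eqref{transport variabilite} spelled out in the footnote. The crucial reduction from a sum over all living cells to a single lineage is performed via the many-to-one formula of Proposition~\ref{many-to-one}.

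\emph{Step 1: Reduction to a tagged branch.} Proposition~\ref{many-to-one} allows one to write
\[
\langle n(t,\cdot),\varphi\rangle = \E_\mu\Bigl[\sum_{i\geq 1}\varphi\bigl(X_i(t),V_i(t)\bigr)\Bigr] = \E_\mu\bigl[W_t\,\varphi(Y_t,U_t)\bigr],
\]
where $(Y_t,U_t)$ is the piecewise-deterministic Markov process on $\mathcal{S}$ describing the uniformly tagged lineage (size growing exponentially at instantaneous rate $U_t$, divisions at rate $B(Y_t)$ halving the size, new growth rate resampled from $\rho(U_{t^-},\cdot)$), and $W_t=2^{N_t}$ with $N_t$ the number of divisions on the tagged branch up to time $t$, this weight accounting for the binary branching.

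\emph{Step 2: Generator computation and Dynkin's formula.} The infinitesimal generator of the weighted Markov process $(W_t,Y_t,U_t)$ acting on functionals of the form $(w,y,u)\mapsto w\varphi(y,u)$ is, for $\varphi\in\mathcal{C}_0^1(\mathcal{S})$,
\[
(\mathcal{A}\varphi)(x,v) = vx\,\partial_x\varphi(x,v) + B(x)\Bigl(2\!\int_{\mathcal{E}}\varphi(x/2,v')\rho(v,dv') - \varphi(x,v)\Bigr),
\]
the transport term coming from the deterministic exponential flow between jumps, the gain $2\!\int\varphi(x/2,v')\rho(v,dv')$ from the doubling of $W$ combined with size halving and resampling of the growth rate at each division, and $-\varphi(x,v)$ from the loss of the pre-division state. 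Dynkin's formula combined with Step~1 yields
\[
\tfrac{d}{dt}\langle n(t,\cdot),\varphi\rangle = \langle n(t,\cdot),\mathcal{A}\varphi\rangle.
\]

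\emph{Step 3: Identification with the weak PDE.} Integrating by parts the transport term in $\langle n(t,\cdot),vx\,\partial_x\varphi\rangle$ (no boundary contribution thanks to the compact support of $\varphi$ in the interior of $\mathcal{S}$) and performing the change of variable $x\mapsto 2x$ in the fragmentation gain term in order to make $B(2x)\,n(t,2dx,dv')$ appear, one recovers line-by-line the weak formulation of \eqref{transport variabilite} given in the footnote. The initial condition $n(0,\cdot)=\mu$ follows trivially since at $t=0$ only the root $(\xi_\emptyset,\tau_\emptyset)\sim\mu$ is alive.

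\emph{Main obstacle.} The principal technical difficulty is analytical rather than conceptual: one must justify the finiteness of $\E_\mu[W_t\,\abs{\varphi(Y_t,U_t)}]$ and $\E_\mu[W_t\,\abs{(\mathcal{A}\varphi)(Y_t,U_t)}]$ and the interchange of time-differentiation with expectation in the Dynkin step. Under Assumption~\ref{basic assumption}, compactness of $\mathcal{E}$ together with continuity of $B$ makes $B$ bounded on $\supp\varphi$, while the condition $\int^\infty x^{-1}B(x)dx=\infty$ prevents accumulation of divisions and guarantees non-explosion of $N_t$ in finite time; these two ingredients deliver the required local integrability. The bulk of the book-keeping surrounding the tagged lineage and its weight $W_t$ is, however, absorbed into the separate statement and proof of Proposition~\ref{many-to-one}.
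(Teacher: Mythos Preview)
Your approach is the paper's: reduce to the tagged lineage via Proposition~\ref{many-to-one}, then differentiate in time through the generator of the tagged process. The paper carries this out by hand, expanding the difference quotient $\Delta_h\langle n(t,\cdot),\varphi\rangle$ on the events $\{C_{t+h}-C_t=0\}$, $\{C_{t+h}-C_t=1\}$, $\{C_{t+h}-C_t\ge 2\}$, which is precisely the standard proof of the Dynkin formula you invoke; your Steps~1--3 are a correct repackaging of that computation, and the generator you write down is the right one.

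The gap is in your ``Main obstacle'' paragraph. First, the role of $\int^\infty x^{-1}B(x)\,dx=\infty$ is inverted: this condition forces the division hazard to diverge along each trajectory, i.e.\ it guarantees that each cell \emph{does} divide in finite time, and says nothing about preventing accumulation of divisions. Second, and more seriously, almost-sure finiteness of $N_t$ is not what you need; Dynkin requires $\E_\mu\big[2^{N_t}|\varphi(Y_t,U_t)|\big]<\infty$ and $\E_\mu\big[2^{N_t}|(\mathcal A\varphi)(Y_t,U_t)|\big]<\infty$, and an a.s.\ finite $N_t$ does not control $\E[2^{N_t}]$. The paper's key observation, which your argument is missing, is the identity $2^{C_t}=\xi_\emptyset\,e^{\overline{\mathcal V}(t)}/\chi(t)$ coming from \eqref{rep chi}: since $\varphi\in\mathcal C_0^1(\mathcal S)$ has compact support in the interior of $\mathcal S$, there is $c(\varphi)>0$ with $\varphi(y,\cdot)=0$ for $y$ outside a compact of $(0,\infty)$, so on $\{\varphi(\chi(t),\cdot)\neq 0\}$ one has $\chi(t)\ge c(\varphi)$ and hence the \emph{deterministic} bound $2^{C_t}|\varphi(\chi(t),{\mathcal V}(t))|\le \sup|\varphi|\,\xi_\emptyset\,e^{e_{\max}t}/c(\varphi)$. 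This single estimate simultaneously yields the required integrability and the domination needed to pass to the limit; without it your Dynkin step is not justified under Assumption~\ref{basic assumption} alone.
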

Theorem \ref{sol transport general} somehow legitimates our methodology: by enabling each cell to have its own growth rate and by building-up new statistical estimators in this context, we still have a translation in terms of the approach in \cite{DPZ}. In particular, we will be able to compare our estimation results with \cite{DHRR}. Our proof is based on fragmentation techniques, inspired by Bertoin \cite{bertoin} and Haas \cite{Haas}. Alternative approaches to the same kind of questions include the probabilistic studies of Chauvin {\it et al.} \cite{CRW}, Bansaye {\it et al.} \cite{BDMV} or  Harris and Roberts \cite{HR} and references therein. 
\section{Statistical estimation of the division rate} \label{statistical analysis}
\subsection{Two observation schemes} \label{two observation schemes}
Let ${\mathcal U}_n \subset {\mathcal U}$ denote a subset of size $n$ of connected nodes: if $u$ belongs to ${\mathcal U}_n$, so does its parent $u^-$. We look for a nonparametric estimator of the division rate  
$$y\leadsto \widehat B_n(y) = \widehat B_n\big(y,(\xi_u, \tau_u), u \in {\mathcal U}_n)\big)\;\;\text{for}\;\;y \in (0,\infty)$$ 
Statistical inference is based on the observation scheme 
$$\big((\xi_u, \tau_u), u \in {\mathcal U}_n \big)$$ 
and asymptotic study is undertaken as the population size of the sample $n \rightarrow \infty$. We are interested in two specific observation schemes.
\begin{proof}[The full tree case]  We observe every pair $(\xi_u,\tau_u)$ over the first $N_n$ generations of the tree:
$${\mathcal U}_n = \{u \in {\mathcal U},\;\;|u| \leq N_n\}$$
with the notation 
$|u|=n$ if $u = (u_0, u_1,\ldots, u_n) \in {\mathcal U}$, and $N_n$ is chosen such that that $2^{N_n}$ has order $n$.
\end{proof}
\begin{proof}[The sparse tree case] We follow the first $n$ offsprings of a single cell, along a fixed line of descendants. This means that for some $u\in {\mathcal U}$ with $|u|=n$, we observe every size $\xi_u$ and growth rate $\tau_u$ of each node $(u_0)$, $(u_0,u_1)$, $(u_0,u_1,u_2)$ and so on up to a final node $u=(u_0,u_1,\ldots, u_{n})$.
\end{proof}
\begin{remark} \label{premiere remarque} For every $n \geq 1$, we tacitly assume that there exists a (random) time $T_n<\infty$ almost surely, such that for $t \geq T_n$, the observation scheme ${\mathcal U}_n$ is well-defined. This is a consequence of the behaviour of $B$ near infinity that we impose later on in \eqref{poly control} below.
\end{remark}
\subsection{Estimation of the division rate} \label{estimation of the division rate}
\subsubsection*{Identification of the division rate}
We denote by $\boldsymbol{x} = (x,v)$ an element of the state space ${\mathcal S} = [0,\infty)\times {\mathcal E}$. Introduce the transition kernel 
$$
{\mathcal P}_{B}(\boldsymbol{x},d\boldsymbol{x'}) = \PP\big((\xi_u,\tau_u)\in d\boldsymbol{x'}\big|\,(\xi_{u^-},\tau_{u^-})=\boldsymbol{x}\big)
$$
of the size and growth rate distribution $(\xi_u,\tau_u)$ at the birth of a descendant $u \in {\mathcal U}$, given the size at birth and growth rate of its parent $(\xi_{u^-},\tau_{u^-})$.
From \eqref{def div rate}, we infer that $\PP(\zeta_{u^-}\in dt\,\big|\,\xi_{u^-}=x, \tau_{u^-}=v)$ is equal to
$$
B\big(x\exp(v t)\big)\exp\Big(-\int_0^tB\big(x\exp(v s)\big)ds\Big)dt.
$$
Using formula \eqref{fundamental}, by a simple change of variables
$$\PP\big(\xi_u\in dx'\big|\,\xi_{u^-}=x, \tau_{u^-}=v\big)
=\frac{B(2x')}{v x'}{\bf 1}_{\{x' \geq x/2\}}\exp\big(-\int_{x/2}^{x'} \tfrac{B(2s)}{vs}ds\big)dx'.
$$
Incorporating \eqref{markov heritage},  we obtain an explicit formula for
$$
{\mathcal P}_{B}(\boldsymbol{x},d\boldsymbol{x'})  ={\mathcal P}_B\big((x,v), x', dv')dx',$$
with
\begin{equation} 
{\mathcal P}_B\big((x,v), x', dv')  = \frac{B(2x')}{v x'}{\bf 1}_{\{x' \geq x/2\}}\exp\big(-\int_{x/2}^{x'} \tfrac{B(2s)}{v s}ds\big) \rho(v,dv').  \label{density explicit}
\end{equation}
Assume further that ${\mathcal P}_B$ admits an invariant probability measure $\nu_B(d\boldsymbol{x})$, {\it i.e.} a solution to 
\begin{equation} \label{def mesure invariante}
\nu_B{\mathcal P}_B=\nu_B,
\end{equation}
where 
$$\mu{\mathcal P}_B(d\boldsymbol{y})=\int_{{\mathcal S}}\mu(d\boldsymbol{x}){\mathcal P}_B(\boldsymbol{x},d\boldsymbol{y})$$
denotes the left action of positive measures $\mu(d\boldsymbol{x})$ on $\mathcal{S}$ for the transition ${\mathcal P}_B$. 
\begin{prop} \label{rep B via inv}
Work under Assumption \ref{basic assumption}. If ${\mathcal P}_B$ admits an invariant probability measure $\nu_B$ of the form $\nu_B(d\boldsymbol{x}) = \nu_B(x,dv)dx$ then we have
\begin{equation} \label{representation cle}
\nu_B(y) =  \frac{B(2y)}{y} \E_{\nu_B}\Big[\frac{1}{\tau_{u^-}}{\bf 1}_{\{\xi_{u^-} \leq 2y,\;\xi_u \geq y\}}\Big]
\end{equation}
where $\E_{\nu_B}[\cdot]$ denotes expectation when the initial condition $(\xi_\emptyset, \tau_\emptyset)$ has distribution $\nu_B$ and where we have set 
$\nu_B(y) = \int_{{\mathcal E}}\nu_B(y,dv')$
in \eqref{representation cle} for the marginal density of the invariant probability measure $\nu_B$ with respect to $y$.
\end{prop}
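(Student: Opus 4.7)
The plan is to derive \eqref{representation cle} directly from the stationarity equation $\nu_B{\mathcal P}_B=\nu_B$ using the explicit transition density \eqref{density explicit}, and to treat the existence of $\nu_B$ separately. Since \eqref{density explicit} has a density in $x'$ against Lebesgue measure, any invariant probability measure must, after one step of the chain, be absolutely continuous in its first coordinate; this justifies the decomposition $\nu_B(d\boldsymbol{x})=\nu_B(x,dv)\,dx$ claimed in the statement.

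For the representation, I would test the invariance relation against functions depending only on the size variable, using $\rho(v,{\mathcal E})=1$, which after integrating out $v'$ yields the scalar identity
\begin{equation*}
\nu_B(y)=\frac{B(2y)}{y}\int_{{\mathcal S}}\frac{1}{v}\,{\bf 1}_{\{x\le 2y\}}\exp\!\Big(-\!\int_{x/2}^{y}\frac{B(2s)}{vs}\,ds\Big)\,\nu_B(x,dv)\,dx.
\end{equation*}
The next step is to recognize the exponential inside the integral as the conditional survival probability of $\xi_u$. Setting $G_{x,v}(y)=\int_{x/2}^{y}(vs)^{-1}B(2s)\,ds$, the conditional density of $\xi_u$ given $(\xi_{u^-},\tau_{u^-})=(x,v)$ derived from \eqref{def div rate} and \eqref{fundamental} reads $G'_{x,v}(y)\,e^{-G_{x,v}(y)}{\bf 1}_{\{y\ge x/2\}}$, and the hypothesis $\int^{\infty}B(u)/u\,du=\infty$ in Assumption \ref{basic assumption} ensures $G_{x,v}(\infty)=\infty$, so the total mass is one and
\begin{equation*}
\PP(\xi_u\ge y\,|\,\xi_{u^-}=x,\tau_{u^-}=v)=\exp(-G_{x,v}(y))\quad\text{for }y\ge x/2.
\end{equation*}
Plugging this back into the scalar identity rewrites the integral as $\E_{\nu_B}\bigl[\tau_{u^-}^{-1}{\bf 1}_{\{\xi_{u^-}\le 2y,\,\xi_u\ge y\}}\bigr]$ and produces \eqref{representation cle}.

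The main obstacle is in fact the existence of $\nu_B$, since Assumption \ref{basic assumption} alone does not obviously guarantee it: one needs to control the long-time behaviour of the chain ${\mathcal P}_B$ near $0$ and $+\infty$ together with that of the inheritance kernel $\rho$ on ${\mathcal E}$. I would handle this by a Foster--Lyapunov / Meyn--Tweedie argument, constructing a Lyapunov function on ${\mathcal S}$ and a small-set condition derived from the explicit density \eqref{density explicit}; this is precisely what the later geometric-ergodicity result, Proposition~\ref{prop transition}, will provide. An alternative restricted to the first marginal is a Krein--Rutman-type fixed-point argument for the scalar Volterra integral equation displayed above, combined with the unique stationary distribution of $\rho$ on the compact set ${\mathcal E}$; but the Lyapunov route is more robust and dovetails with the ergodicity tools used later in the paper.
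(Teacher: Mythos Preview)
Your proposal is correct and follows essentially the same route as the paper. Both start from $\nu_B{\mathcal P}_B=\nu_B$ with the explicit density \eqref{density explicit}, integrate out $v'$, and use Assumption~\ref{basic assumption} to interpret $\exp\bigl(-\int_{x/2}^{y}\tfrac{B(2s)}{vs}\,ds\bigr)$ as $\PP(\xi_u\ge y\,|\,\xi_{u^-}=x,\tau_{u^-}=v)$; the paper does this last step by rewriting the exponential as $\int_y^\infty\tfrac{B(2s)}{vs}e^{-G_{x,v}(s)}\,ds$ and then recognising $\nu_B(dx,dv)\,{\mathcal P}_B((x,v),s,dv')\,ds$ as the joint law, whereas you identify the survival function directly---these are two phrasings of the same computation. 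Your observation that existence of $\nu_B$ under Assumption~\ref{basic assumption} alone is not established here, and is in fact supplied later via the Lyapunov/minorisation argument of Proposition~\ref{prop transition}, is accurate and matches how the paper actually proceeds.
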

We exhibit below a class of division rates $B$ and growth rate kernels $\rho$ that guarantees the existence of such an invariant probability measure. 
\subsubsection*{Construction of a nonparametric estimator}
Inverting \eqref{representation cle} and applying an appropriate change of variables, we obtain
\begin{equation} \label{first rep}
B(y)=\frac{y}{2}\frac{\nu_B(y/2)}{\E_{\nu_B}\Big[\frac{1}{\tau_{u^{-}}}{\bf 1}_{\{\xi_{u^-} \leq y,\;\xi_u \geq y/2\}}\Big]},
\end{equation}
provided the denominator is positive. This formula has no easy interpretation: it is obtained by some clever manipulation of the equation $\nu_B = {\mathcal P}_B\nu_B$. A tentative interpretation in the simplest case with no variability (so that $\tau_u=\tau$ for some fixed $\tau>0$ and for every $u \in \mathcal U$ is proposed in Section \ref{num}. 
Representation \eqref{first rep} also
suggests an estimation procedure, replacing the marginal density $\nu_B(y/2)$ and the expectation in the denominator by their empirical counterparts. To that end,
pick a kernel  function
$$K: [0,\infty)\rightarrow \R,\;\;\int_{[0,\infty)}K(y)dy=1,$$ 
and set $K_h(y)=h^{-1}K\big(h^{-1}y\big)$ for $y\in [0,\infty)$ and $h>0$. Our estimator is defined as
\begin{align} 
\widehat B_n(y) 
& = \frac{y}{2}\,\frac{n^{-1}\sum_{u\in {\mathcal U}_n}K_h(\xi_u-y/2)}{n^{-1}\sum_{u \in {\mathcal U}_n} \frac{1}{\tau_{u^-}}{\bf 1}_{\displaystyle \{\xi_{u^-}\leq y, \xi_u \geq y/2\}} \bigvee \varpi},
\label{def estimator}
\end{align}
where $\varpi >0$ is a threshold that ensures that the estimator is well defined in all cases and $x \bigvee y = \max\{x,y\}$. Thus $(\widehat B_n(y), y\in {\mathcal D})$ is specified by the choice of the kernel  $K$, the bandwidth $h>0$ and the threshold $\varpi>0$. 
\begin{assumption} \label{prop K} The function $K$ is bounded with  compact support, and for some integer $n_0 \geq 1$, we have
$\int_{[0,\infty)}x^kK(x)dx={\bf 1}_{\{k=0\}}\;\;\text{for}\;\;0 \leq k\leq n_0.$
\end{assumption}
\subsection{Error estimates} \label{upper rates}
We assess the quality of $\widehat B_n$ in squared-loss error over compact intervals ${\mathcal D}$. We need to specify local smoothness properties of $B$ over ${\mathcal D}$, together with general properties that ensure that  the empirical measurements in \eqref{def estimator} converge with an appropriate speed of convergence. This amounts to impose an appropriate behaviour of $B$ near the origin and infinity.
\subsubsection*{Model constraints}
For $\lambda > 0$ such that $2^\lambda > \sup \mathcal E/\inf \mathcal E > 0$ and a vector of positive constants 
$\mathfrak{c}=(r,m, \ell, L)$,
introduce the class ${\mathcal F}^\lambda(\mathfrak{c})$  
of continuous functions
$B:[0,\infty)\rightarrow [0,\infty)$  such that 
\begin{equation} \label{loc control}
\int_{0}^{r/2}x^{-1}B(2x)dx \leq L,\;\;\;\int_{r/2}^{r}x^{-1}B(2x)dx \geq \ell,
\end{equation}
and
\begin{equation} \label{poly control}
B(x) \geq m\, x^\lambda\;\;\text{for}\;\;\;x\geq r.
\end{equation}
\begin{remark}
Similar conditions on the behaviour of $B$ can also be found in \cite{DG}, in a deterministic setting. 
\end{remark}
\begin{remark}
Assumption \ref{basic assumption} is satisfied as soon as $B \in {\mathcal F}^\lambda(\mathfrak{c})$ (and $\rho$ is defined on a compact ${\mathcal E} \subset (0,\infty)$ of course). 
\end{remark}
Let $\rho_{\min}, \rho_{\max}$ be two positive finite measures on ${\mathcal E}$ such that $\rho_{\max}-\rho_{\min}$ is a positive measure and $\rho_{\min}({\mathcal E})>0$.
We define ${\mathcal M}(\rho_{\min},\rho_{\max})$ as the class of Markov transitions $\rho(v,dv')$ on ${\mathcal E}$ such that
\begin{equation} \label{geo ergo variabilite}
\rho_{\min}(A) \leq \rho(v,A) \leq \rho_{\max}(A),\;\;A \subset {\mathcal E},v \in {\mathcal E}.
\end{equation}
\begin{remark}
Control \eqref{geo ergo variabilite} ensures the geometric ergodicity of the process of variability in the growth rate.
\end{remark}
Let us be given in the sequel a vector of positive constants $\mathfrak{c}=(r,m, \ell, L)$ and $0<e_{\min} \leq  e_{\max}$ such that ${\mathcal E} \subset [e_{\min}, e_{\max}]$. We introduce the Lyapunov function
\begin{equation} \label{first def Lyapu}
{\mathbb V}(x,v)={\mathbb V}(x)=\exp\big(\tfrac{m}{e_{\min} \lambda}x^\lambda\big)\;\;\text{for}\;\;(x,v)\in {\mathcal S}.
\end{equation}
The function ${\mathbb V}$ controls the rate of the geometric ergodicity of the chain with transition ${\mathcal P}_B$ and will appear in the proof of Proposition \ref{prop transition} below. Define
$$\delta = \delta(\mathfrak{c}):=\frac{1}{1-2^{-\lambda}} \exp\big(- (1-2^{-\lambda})\tfrac{m}{e_{\max} \lambda} r^\lambda\big)\rho_{\max}({\mathcal E})
.$$
\begin{assumption}[The sparse tree case]
\label{contrainte constante}
Let $\lambda >0$. We have $\delta(\mathfrak{c})<1$.
\end{assumption}
In the case of the full tree observation scheme, we will need more stringent (and technical) conditions on $\mathfrak{c}$. Let $\gamma_{B,{\mathbb V}}$ denote the spectral radius of the operator ${\mathcal P}_B - 1 \otimes \nu_B$ acting on the Banach space of functions $g:{\mathcal S}\rightarrow \R$ such that 
$$\sup\{|g(\boldsymbol{x})|/{\mathbb V}(\boldsymbol{x}), \boldsymbol{x}\in {\mathcal S}\}<\infty,$$
where ${\mathbb V}$ is defined in \eqref{first def Lyapu} above. 
\begin{assumption}[The full tree case] \label{the full tree assumption}
We have 
$\delta(\mathfrak{c})<\tfrac{1}{2}$
and moreover
\begin{equation} \label{condition norme operateur}
\sup_{B \in {\mathcal F}^\lambda(\mathfrak{c})}\gamma_{B,{\mathbb V}}<\tfrac{1}{2}.
\end{equation}
\end{assumption}
\begin{remark}
It is possible to obtain bounds on $\mathfrak{c}$ so that \eqref{condition norme operateur} holds, by using explicit (yet intricate) bounds on $\gamma_{B,{\mathbb V}}$ following Fort {\it et al.} or \cite{FMP}, Douc {\it et al.} \cite{DMR}, see also Baxendale \cite{B}.
\end{remark}
\subsubsection*{Rate of convergence}
We are ready to state our main result. For $s>0$, with $s=\lfloor s\rfloor + \{s\}$, $0< \{s\} \leq 1$ and $\lfloor s\rfloor$ an integer, introduce the H\"older space ${\mathcal H}^s({\mathcal D})$ of functions $f:{\mathcal D}\rightarrow \R$ possessing a derivative of order $\lfloor s \rfloor$ that satisfies
\begin{equation} \label{def sob}
|f^{\lfloor s \rfloor}(y)-f^{\lfloor s \rfloor}(x)| \leq c(f)|x-y|^{\{s\}}.
\end{equation}
The minimal constant $c(f)$ such that \eqref{def sob} holds defines a semi-norm $|f|_{{\mathcal H}^s(\mathcal{D})}$. We equip the space ${\mathcal H}^s(\mathcal D)$ with the norm 
$$\|f\|_{{\mathcal H}^s(\mathcal D)} = \|f\|_{L^\infty({\mathcal D})} + |f|_{{\mathcal H}^s({\mathcal D})}$$ and the H\"older balls
$${\mathcal H}^s({\mathcal D}, M) = \{B,\;\|B\|_{{\mathcal H}^s({\mathcal D})} \leq M\},\;M>0.$$
\begin{theorem} \label{upper bound}
Work under Assumption \ref{contrainte constante} in the sparse tree case and Assumption \ref{the full tree assumption} in the full tree case.
Specify $\widehat B_n$ with a kernel $K$ satisfying Assumption \ref{prop K} for some $n_0>0$ and
$$h_n=c_0n^{-1/(2s+1)},\;\; \varpi_n \rightarrow 0.$$
For every $M>0$,  there exist $c_0=c_0(\mathfrak{c}, M)$ and $d(\mathfrak{c})\geq0$ such that for every $0<s<n_0$ and every compact interval ${\mathcal D}\subset (d(\mathfrak{c}),\infty)$ such that $\inf {\mathcal D} \geq r/2$, we have
$$
\sup_{\rho, B}\E_{\mu}\big[\|\widehat B_n-B\|_{L^2({\mathcal D})}^2\big]^{1/2} \lesssim \varpi_n^{-1}(\log n)^{1/2}  n^{-s/(2s+1)},$$
where the supremum is taken over
$$\rho \in  {\mathcal M}(\rho_{\min},\rho_{\max})\;\;\text{and}\;\;B \in {\mathcal F}^\lambda(\mathfrak{c}) \cap {\mathcal H}^s({\mathcal D}, M),$$
and $\E_{\mu}[\cdot]$ denotes expectation with respect to any initial distribution $\mu(d\boldsymbol{x})$ for $(\xi_\emptyset,\tau_\emptyset)$ on ${\mathcal S}$ such that $\int_{\mathcal S}{\mathbb V}(\boldsymbol{x})^2\mu(d\boldsymbol{x})<\infty$.
 \end{theorem}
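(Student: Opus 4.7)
The plan is to decompose the risk via a bias--variance analysis of the numerator and denominator in \eqref{def estimator}, exploiting the identification \eqref{first rep}. Set $N(y) := \nu_B(y/2)$ and $D(y) := \E_{\nu_B}\bigl[\tau_{u^-}^{-1}{\bf 1}_{\{\xi_{u^-}\le y,\, \xi_u \ge y/2\}}\bigr]$, so that $B(y) = (y/2) N(y)/D(y)$. A first step is to show that $D$ is bounded away from $0$ on ${\mathcal D}$ uniformly over the parameter class, by combining the explicit kernel \eqref{density explicit} with the lower bound $\int_{r/2}^{r}x^{-1}B(2x)dx \geq \ell$ from \eqref{loc control}; this is what fixes $d(\mathfrak{c})$. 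On the event $\Omega_n = \{\|\widehat D_n - D\|_{\infty,{\mathcal D}} \le \tfrac12 \inf_{\mathcal D} D\}$ the truncation at $\varpi_n$ is inactive for large $n$, and the linearisation
\[
\widehat B_n(y) - B(y) = \frac{y/2}{D(y)}\bigl(\widehat N_n(y) - N(y)\bigr) - \frac{B(y)}{D(y)}\bigl(\widehat D_n(y) - D(y)\bigr) + R_n(y)
\]
holds with $R_n$ quadratic in the deviations. Off $\Omega_n$, a crude pointwise bound $|\widehat B_n(y)| \lesssim \varpi_n^{-1} = \log n$ combined with an exponential concentration of $\widehat D_n$ via the ergodicity estimates below makes that contribution of higher order.

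For the bias of the numerator, $\E_\mu[\widehat N_n(y)] - N(y) = \int K(u)\bigl(\nu_B(y/2 + hu) - \nu_B(y/2)\bigr)du$ up to an initial-condition term that decays geometrically; an order-$\lfloor s\rfloor$ Taylor expansion combined with the moment cancellation in Assumption \ref{prop K} gives a bias of order $h^s \|\nu_B\|_{{\mathcal H}^s}$. The key observation is that $\nu_B$ inherits the H\"older regularity of $B$: returning to \eqref{representation cle}, $\nu_B(y) = (B(2y)/y) \Psi_B(y)$ where $\Psi_B(y) = \E_{\nu_B}\bigl[\tau_{u^-}^{-1}{\bf 1}_{\{\xi_{u^-}\le 2y,\,\xi_u \ge y\}}\bigr]$ is Lipschitz by direct inspection (the indicator boundaries move with $y$ at unit speed), so differentiating $\lfloor s \rfloor$ times preserves the H\"older modulus of $B$ and the resulting semi-norm $\|\nu_B\|_{{\mathcal H}^s}$ is controlled uniformly by a constant depending only on $\mathfrak{c}$ and $M$. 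The denominator $\widehat D_n$ is essentially unbiased up to a negligible initial-condition bias.

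The main obstacle is the variance analysis on the binary tree, which relies on covariance inequalities derived from the $V$-uniform ergodicity of ${\mathcal P}_B$ encoded by the spectral radius $\gamma_{B,{\mathbb V}}$. For two nodes $u,v \in {\mathcal U}_n$ whose most recent common ancestor lies at tree-distance $k$ from each of them, one obtains
\[
\bigl|\mathrm{Cov}_\mu\bigl(\varphi(\xi_u,\tau_u),\,\varphi(\xi_v,\tau_v)\bigr)\bigr| \lesssim \|\varphi\|_\infty^2\,\gamma_{B,{\mathbb V}}^{k}\bigl(1 + \E_\mu[{\mathbb V}]^{2}\bigr).
\]
In the sparse tree case, Assumption \ref{contrainte constante} and summation of the geometric series in $k$ yield $\var(\widehat N_n(y)) \lesssim \E[K_h^2]/n \lesssim 1/(nh)$; a delicate point, flagged in the introduction, is to use $\E[K_h^2] \lesssim h^{-1}$ on the diagonal and $\|K_h\|_\infty \lesssim h^{-1}$ only on the mixing off-diagonal terms. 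In the full tree case, generation $k$ contributes $2^k$ descendants of any fixed ancestor, producing a series $\sum_k (2\gamma_{B,{\mathbb V}})^k$ which converges precisely because of \eqref{condition norme operateur}; the logarithmic factor traces back to the number of generations $N_n \asymp \log_2 n$. The same argument with a bounded integrand gives $\var(\widehat D_n(y)) \lesssim (\log n)^2/n$.

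Integrating the linearised pointwise bound over ${\mathcal D}$ yields $\E_\mu[\|\widehat B_n - B\|^2_{L^2({\mathcal D})}] \lesssim h^{2s} + (\log n)^2/(nh)$, and the choice $h = c_0 n^{-1/(2s+1)}$ balances the two terms to produce the rate $(\log n)\, n^{-s/(2s+1)}$. All constants entering the ergodicity bounds, the Lyapunov function ${\mathbb V}$, and the lower bound on $D$ depend only on $\mathfrak{c}$, ${\mathcal E}$, and $(\rho_{\min},\rho_{\max})$, which is what permits the supremum over $B \in {\mathcal F}^\lambda(\mathfrak{c})\cap {\mathcal H}^s({\mathcal D},M)$ and $\rho \in {\mathcal M}(\rho_{\min},\rho_{\max})$ to be taken inside the expectation. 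The hardest point is precisely the uniform covariance control for the $h$-dependent, unbounded kernels $K_h$ along the branching tree; once it is in place, the remainder follows the standard kernel-estimator route.
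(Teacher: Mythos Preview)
Your overall strategy---bias--variance decomposition, regularity transfer $B\to\nu_B$, covariance control via geometric ergodicity---matches the paper's. The substantive divergence is in how you handle the random denominator, and there your argument has a gap.

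You linearise on the event $\Omega_n=\{\|\widehat D_n-D\|_{\infty,{\mathcal D}}\le\tfrac12\inf_{\mathcal D}D\}$ and invoke a crude bound on $\Omega_n^c$. Two problems. First, the claimed bound $|\widehat B_n(y)|\lesssim\varpi_n^{-1}$ is false: the numerator $\widehat N_n(y)=n^{-1}\sum_u K_h(\xi_u-y/2)$ is only bounded by $\|K_h\|_\infty\lesssim h^{-1}$, so the correct crude bound is $|\widehat B_n(y)|\lesssim h^{-1}\varpi_n^{-1}$. Second, even with the right bound, the contribution of $\Omega_n^c$ to the risk is $\lesssim h^{-2}(\log n)^2\,\PP_\mu(\Omega_n^c)$, and to make this of order $(\log n)^2 n^{-2s/(2s+1)}$ you need $\PP_\mu(\Omega_n^c)\lesssim n^{-(2s+2)/(2s+1)}$, which is \emph{strictly faster} than $n^{-1}$. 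The only concentration tool you actually establish is a variance bound of order $n^{-1}$, and Chebyshev on that (even ignoring the sup over $y$, which would require extra work) falls short. The ``exponential concentration'' you invoke does not follow from the covariance inequalities you write down; it would require a Bernstein-type inequality for Markov chains on trees, which is a separate and nontrivial ingredient.

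The paper sidesteps this entirely: instead of a good-event argument, it uses the algebraic decomposition
\[
\widehat B_n(2y)-B(2y)=y\Big(\tfrac{K_h\star\nu_B-\nu_B}{D}+\tfrac{K_h\star\widehat\nu_n-K_h\star\nu_B}{D_n{}_{\varpi}}+\tfrac{K_h\star\nu_B}{D_n{}_{\varpi}\,D}(D-D_n{}_{\varpi})\Big)
\]
and simply bounds $D_n(y)_{\varpi_n}^{-1}\le\varpi_n^{-1}$ \emph{deterministically}. The price is a harmless $\varpi_n^{-2}=(\log n)^2$ in front of the variance terms; no concentration beyond second moments is needed.

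A secondary point: for the variance of $\widehat N_n$, merely using $\|K_h\|_\infty\lesssim h^{-1}$ on the off-diagonal covariances gives $n^{-1}h^{-2}$, not $(nh)^{-1}$. The paper's fix (Proposition~\ref{moment bound bis}) is to exploit that ${\mathcal P}_B$ has a bounded transition density in the size variable, yielding $|{\mathcal P}_B K_h|\lesssim h$, whence the refined covariance bound $\gamma^{\mathbb{D}(u,w)}\wedge h\,\gamma^{\mathbb{D}(u,a(u,w))\vee\mathbb{D}(w,a(u,w))}$. Your description of the ``delicate point'' gestures at this but does not identify the mechanism; without it the numerator variance is off by a factor $h^{-1}$.
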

Several remarks are in order: {\bf 1)} Since $\varpi_n$ is arbitrary, we obtain the classical rate $n^{-s/(2s+1)}$ (up to a log term) which is optimal in a minimax sense for density estimation. It is presumably optimal in our context, using for instance classical techniques for nonparametric estimation lower bounds on functions of transition densities of Markov chains, see for instance \cite{GHR}. {\bf 2)} The extra logarithmic term is due to technical reasons: we need it in order to control the decay of correlations of the observations over the full tree structure. {\bf 3)} The knowledge of the smoothness $s$ that is needed for the construction of $\widehat B_n$ is not realistic in practice. An adaptive estimator could be obtained by using a data-driven bandwidth in the estimation of the invariant density $\nu_B(y/2)$ in \eqref{def estimator}. The Goldenschluger-Lepski bandwidth selection method \cite{GL2}, see also \cite{DHRR} would presumably yield adaptation, but checking the assumptions still requires a proof in our setting. We implement data-driven bandwidth in the numerical Section \ref{numerical implementation} below. 
\section{Numerical implementation} \label{numerical implementation}

\subsection{Protocol and results}
\subsubsection*{Generating simulated data}
Given a division rate $B(x)$, a growth rate kernel $\rho$, an initial distribution $\mu({d{\boldsymbol{x}}})$ for the node $(\xi_{\emptyset},\tau_{\emptyset})$ (as in Theorem \ref{upper bound}) and a dataset size $n=2^{N_n}$, we simulate the full tree and the sparse tree schemes 
recursively:
\begin{enumerate}
\item Given $(\xi_{u^-},\tau_{u^-}),$ we select at random its lifetime  $\zeta_{u^-}$ (by a rejection sampling algorithm) with probability density
$$t \leadsto B\big(\xi_{u^-}\exp(\tau_{u^-} t)\big)\exp\Big(-\int_0^tB\big(\xi_{u^-} \exp(\tau_{u^-} s)\big)ds\Big).
$$
following the computations of Section~\ref{estimation of the division rate}.
\item
We derive the size at birth $\xi_u$ for the two offsprings (with $u = (u^-, 0)$ and $(u^-,1)$ with obvious notation) by Formula~\eqref{fundamental}.
\item We simulate at random the growth rates $\tau_{u}$ (by the rejection sampling algorithm) according to the distribution $\rho(\tau_{u^-}, dv).$ 
\item For the sparse tree case, we select only one offspring (either $(u^-,0)$ of $(u^-,1)$), whereas we keep both for the full tree case.
\end{enumerate}
In order to stay in line with previous simulations of \cite{DHRR} we pick $B(x)=x^2$. We fix  $\mu(d{\boldsymbol x})$ as the uniform distribution over $[1/3,3]\times {\mathcal E}$, with ${\mathcal E} = [0.2, 3]$.
As for the growth rate kernel,  we implement 
$$\rho(v,dv')= g(v'-v)dv'$$
where $g$ is a uniform distribution over $[1-\alpha, 1+\alpha]$ for some $\alpha >0$, and dilated by a scaling factor so that $\big(\int (v'-v)^2\rho(v,dv')\big)^{1/2}=1/2$. We also condition the values of $\tau_u$ to stay in ${\mathcal E}$ (by rejection sampling).
\subsubsection*{Implementing  $\widehat B_n$}
We implement $\widehat{B}_n$ using Formula~\eqref{def estimator}. We pick a standard Gaussian kernel $K(x)=(2\pi)^{-1/2}\exp(-x^2/2)$, for which $n_0=1$ in Assumption~\eqref{prop K}; henceforth we expect a rate of convergence of order $n^{-1/3}$ at best.
We evaluate $\widehat{B}_n$ on a regular grid $x_1=\Delta x,\cdots x_m, =m \Delta x$ with $\Delta x = n^{-1/2}$ and $x_m=5$. Thus $x_m$ is large enough so that $\nu_B(x/2)$ becomes negligible for $x\geq x_m$ and $\Delta x$ is smaller than $n^{-1/3}$ to avoid numerical discrepancies. 
For tractability purposes, we wish to avoid the use of any relationship between the nodes $u \in {\mathcal U}_n$. Indeed, whereas it is quite easy to label $u^-$ and $u$ in the sparse tree case, it is a bit more difficult to track the parent of each individual in the full tree case if we do not want to double the memory. As a consequence, we simply reformulate~\eqref{def estimator} into
\begin{align} 
\widehat B_n(y) 
& = \frac{y}{2}\,\frac{n^{-1}\sum_{u\in {\mathcal U}_n}K_h(\xi_u-y/2)}{n^{-1}\sum_{u \in {\mathcal U}_n} \frac{1}{\tau_{u}}{\bf 1}_{\displaystyle \{\xi_{u}\leq y \leq \xi_{u}e^{\tau_{u}\zeta_{u}} \}} \bigvee \varpi}.
\label{def estimator2}
\end{align}
We take $h_n=n^{-1/3}$ for the bandwidth according to Theorem~\ref{upper bound} to serve as a proof of concept. Data-driven choices could of course be made, such as  the Goldenschluger and Lepski's method~\cite{GL2,DHRR}, and improve the already fairly good results shown in Figure~\ref{fig:nvariab2}.
Finally, we also test whether taking into account variability in the growth rate improves significantly or not the estimate of $B,$ replacing $\tau_u$ by its mean value $n^{-1}\sum_{u\in {\cal U}_n} \tau_u$ everywhere in Formula~\eqref{def estimator2}, thus ignoring growth variability in that case. 
\subsubsection*{Numerical results}
We display our numerical results as specified above in Figures~\ref{fig:nvariab}, \ref{fig:nvariab2} and \ref{fig:n2puis10}. Figure~\ref{fig:nvariab} displays the reconstruction of $B$ on the full tree scheme for a simulated sample of size $n=2^{17}$. At a visual level, we see that the estimation deteriorates dramatically when the variability is ignored in the region where $\nu_B$ is small, while our estimator \eqref{def estimator2} still shows good performances. 
\begin{figure}[ht]
\begin{minipage}{16cm}
\includegraphics[width=11cm, height=7cm]{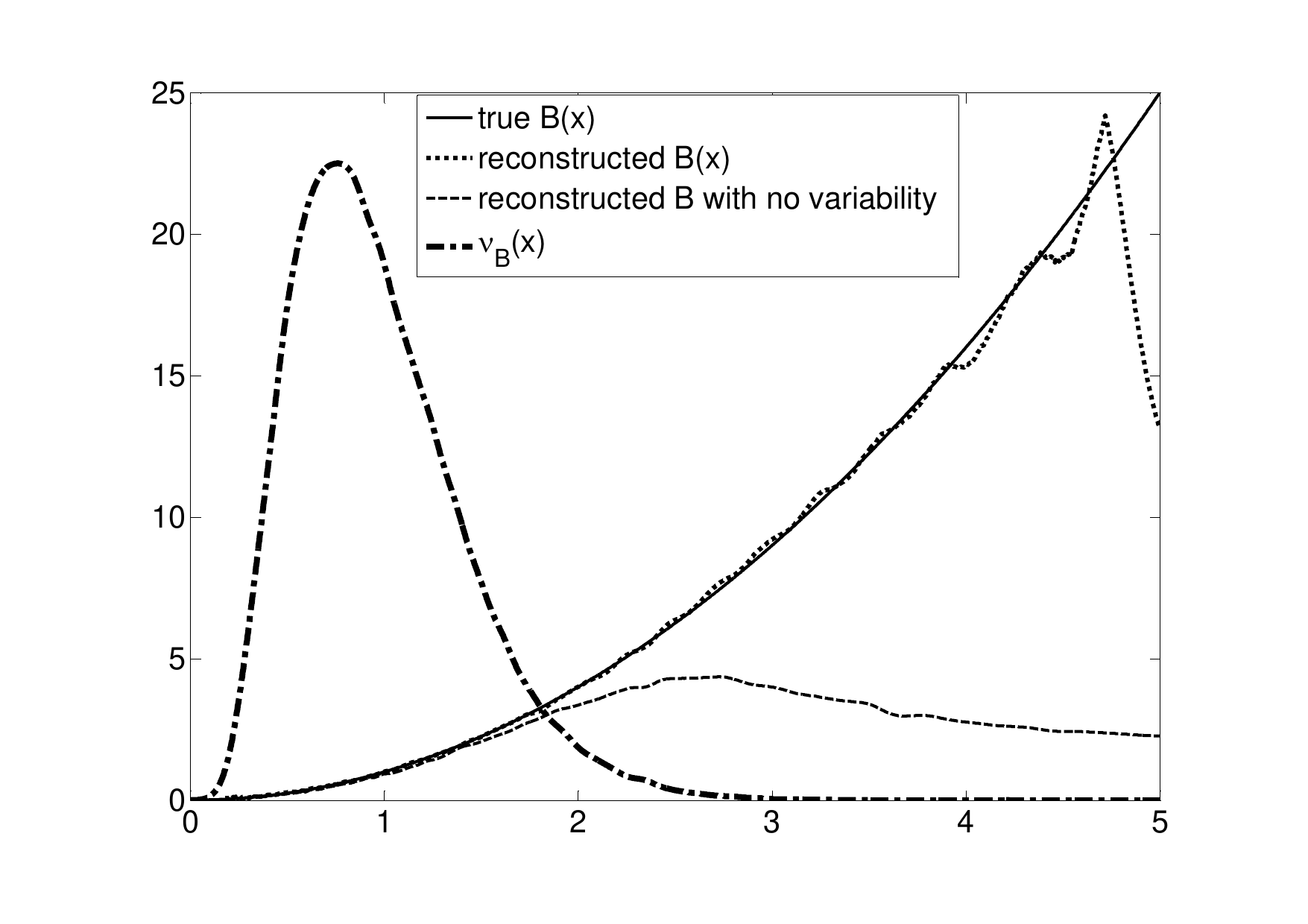}
\end{minipage} 
\caption{\label{fig:nvariab} {\it Reconstruction for $n=2^{17}.$  When the variability in the growth rate is ignored, the estimate reveals unsatisfactory. The parameter values are the reference ones.}}
\end{figure}
\begin{figure}[ht]
\begin{minipage}{16cm}
\includegraphics[width=11cm, height=7cm]{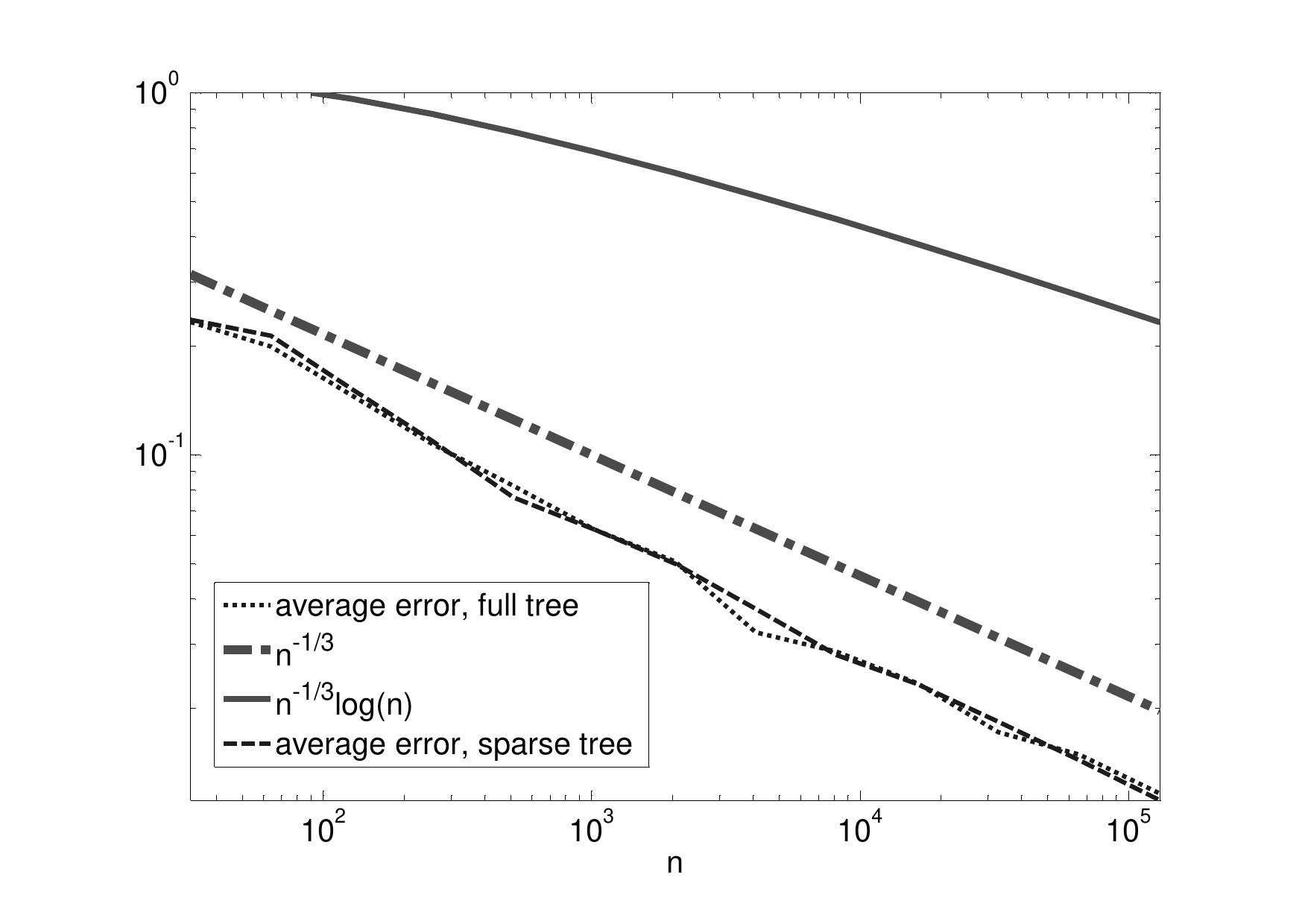}
\end{minipage} 
\caption{\label{fig:nvariab2} {\it Error v.s. $n$ for the full tree and the sparse tree case on a log-log scale. The error actually proves better than the upper rate of convergence announced in Theorem~\ref{upper bound}, and $\varpi$ may be taken smaller than $\log(n)$. Estimates are comparable for both schemes. The parameter values are the reference ones.}}
\end{figure}

In Figure \ref{fig:nvariab2}, we plot on a log-log scale the empirical mean error of our estimation procedure for both full tree and sparse tree schemes. The numerical results agree with the theory. The empirical error is computed as follows: we compute
\begin{equation} \label{error def}
e_i = \frac{\|\widehat B - B\|_{\Delta x, m}}{\|B\|_{\Delta x, m, \varpi}},\;\;i=1,\ldots, M,
\end{equation}
where $\|\cdot\|_{\Delta x,m, \varpi}$ denotes the discrete norm over the numerical sampling described above, conditioned on the fact that the denominator in \eqref{def estimator2} is larger than $\varpi/\log(n)$. We end up with a mean-empirical error 
$\overline{e}=M^{-1}\sum_{i=1}^Me_i$.
The number of Monte-Carlo samples is chosen as $M=100$. In Figure~\ref{fig:n2puis10}, we explore further the degradation of the estimation process on the region where $\nu_B$ is small, plotting $95\%$ confidence intervals of the empirical distribution of the estimates, based on $M=100$ Monte-Carlo samples. Finally, Table \ref{a table!} displays the relative error for the reconstruction of $B$ according to \eqref{error def}. The standard deviation is computed as $(M^{-1}\sum_{i=1}^M\big(e_i-\overline{e}\big)^2)^{1/2}$.
\begin{table} \label{a table!}
\begin{center}
\begin{tabular}{ccccccc}\label{tab:num}
$\log_2(n)$  & $5$ & $6$ & $7$ & $8$ & $9$ & $10$
\\
\hline
$\overline{e}$  & 0.2927 & 0.1904 & 0.1460 & 0.1024 & 0.0835 & 0.0614
\\
\hline
std. dev. & 0.1775 & 0.0893 & 0.0627 & 0.0417 & 0.0364 & 0.0241 \\
\hline
\end{tabular}
\end{center}
\caption{{\it Relative error $\overline{e}$ for $B$ and its standard deviation, with respect to $n$ (on a log scale). The error is computed using \eqref{error def} with $\varpi=1/\log(n).$}}
\end{table}
\begin{figure}[ht]
\begin{minipage}{16cm}
\includegraphics[width=12cm, height=7cm]{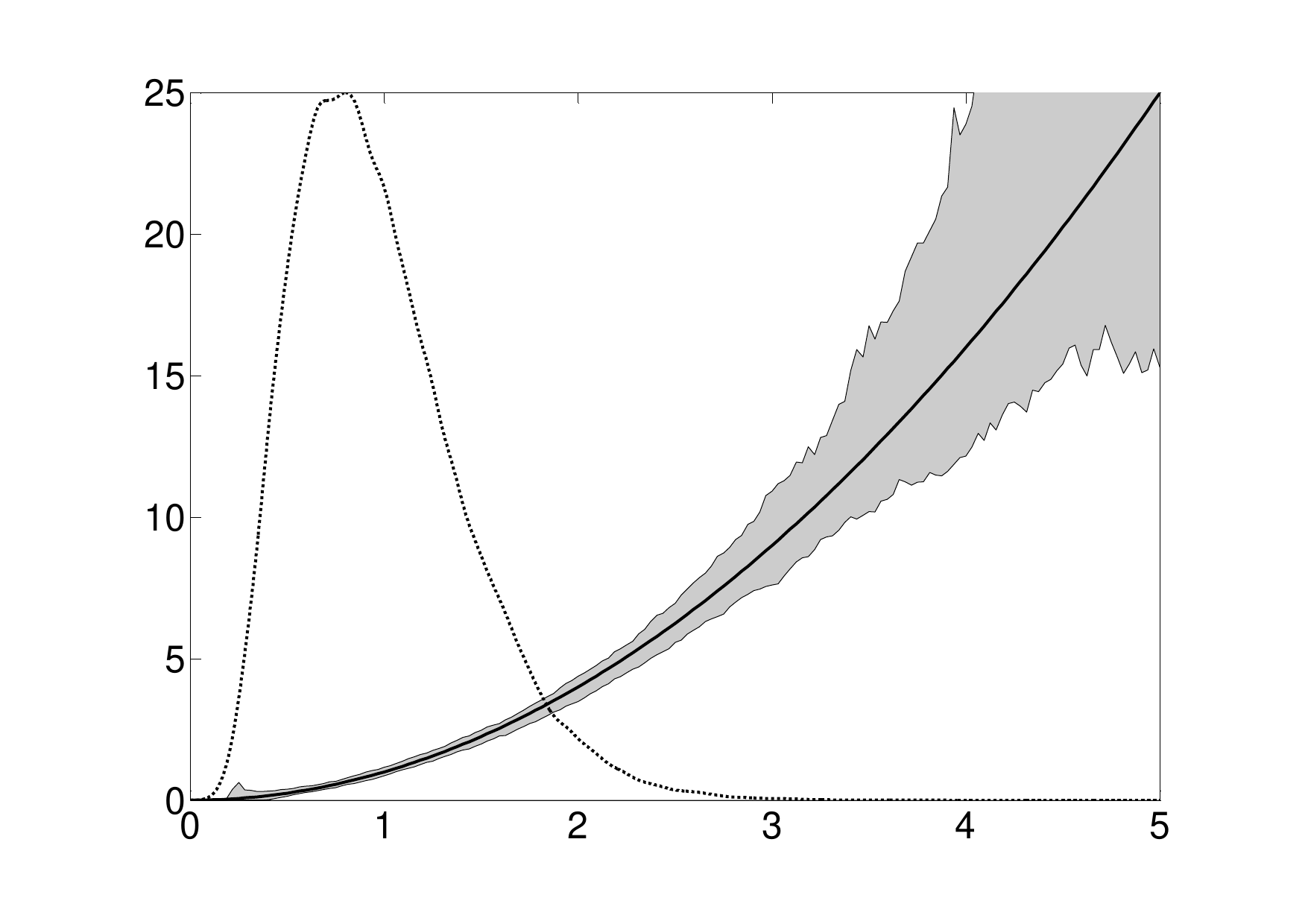}
\end{minipage} 
\caption{\label{fig:n2puis10} {\it Reconstruction for $n=2^{10},$ error band for $95\%$, full tree case, over $M=100$ simulations, with $\varpi=1/{n}$ in order to emphasise that the larger $x,$ the smaller $\nu_B$ and the larger the error estimate}.}
\end{figure}
We also carried out control experiments for other choices of variability kernel $\rho(v,dv')$ for the growth rate. These include $\rho(v,dv')=g(v')dv'$, so that the variability of an individual is not inherited from its parent, a Gaussian density for $g$ with the same prescription for the mean and the variance as in the uniform case, conditioned to live on $[e_{\min}, e_{\max}]$. We also tested the  absence of variability, with $\rho(v,dv')=\delta_{\tau}(dv')$, with $\tau=1$. None of these control experiments show any significant difference from the case displayed in Figures~\ref{fig:nvariab}, \ref{fig:nvariab2} and \ref{fig:n2puis10}. 
 
\subsubsection*{Analysis on {\it E. coli} data}

Finally, we analyse a dataset obtained through microscopic time-lapse imaging of single bacterial cells growing in rich medium, by Wang, Robert et al. \cite{Robert} and by Stewart et al. \cite{Stewart}. Thanks to a microfluidic set-up, the experimental conditions are well controlled and stable, so that the cells are in a steady state of growth (so-called balanced growth). The observation scheme corresponds to the sparse tree case for the data from Wang, Robert et al.: at each generation, only one offspring is followed. On the contrary, data corresponds to the full tree case for the data by Stewart et al., where the cells grow in a culture. The growth and division of the cells is followed by microscopy, and image analysis allows to determine the time evolution of the size of each cell, from birth to division. We picked up the quantities of interest for our implementation:  for each cell, its size at birth, growth rate and lifetime. 
We consider that cells divide equally into two daughter cells, neglecting the small differences of size at birth between daughter cells.
 Each cell grows exponentially fast, but  growth rates exhibit variability. 

Our data is formed by the concatenation of several  lineages, each of them composed with a line of offsprings coming from a first single cell picked at random in a culture. Some of the first and last generations were not considered in order to avoid any experimental disturbance linked either to  non stationary conditions or to aging of the cells. 

We proceed as in the above protocol. Figure~\ref{fig:exper} shows the reconstructed $B$ and $\nu_B$ for a sample of  $n=2335$ cells for the sparse tree data, $n=748$ for the full tree data. Though much more precise and reliable, thanks both to the experimental device and the reconstruction method, our results are qualitatively in accordance with previous indirect reconstructions carried out in \cite{DMZ} on old datasets published in \cite{Kubitschek} back in 1969.

The reconstruction of the division rate is prominent here since it appears to be the last component needed for a full calibration of the model. Thus, our method provides  biologists with a complete understanding of the size dependence of the biological system. 
Phenotypic variability between genetically identical cells has recently received growing attention with the recognition that it can be genetically controlled and subject to selection pressures \cite{Kaern}. Our mathematical framework allows the incorporation of this variability at the level of individual growth rates. It should allow the study of the impact of variability on the population fitness and should be of particular importance to describe the growth of populations of cells exhibiting high variability of growth rates. Several examples of high variability have been described, both in genetically engineered or natural bacterial populations \cite{Sturm,Tan_Marguet_You_2009}.

\begin{figure}[ht]
\begin{minipage}{16cm}
\includegraphics[width=12cm, height=7cm]{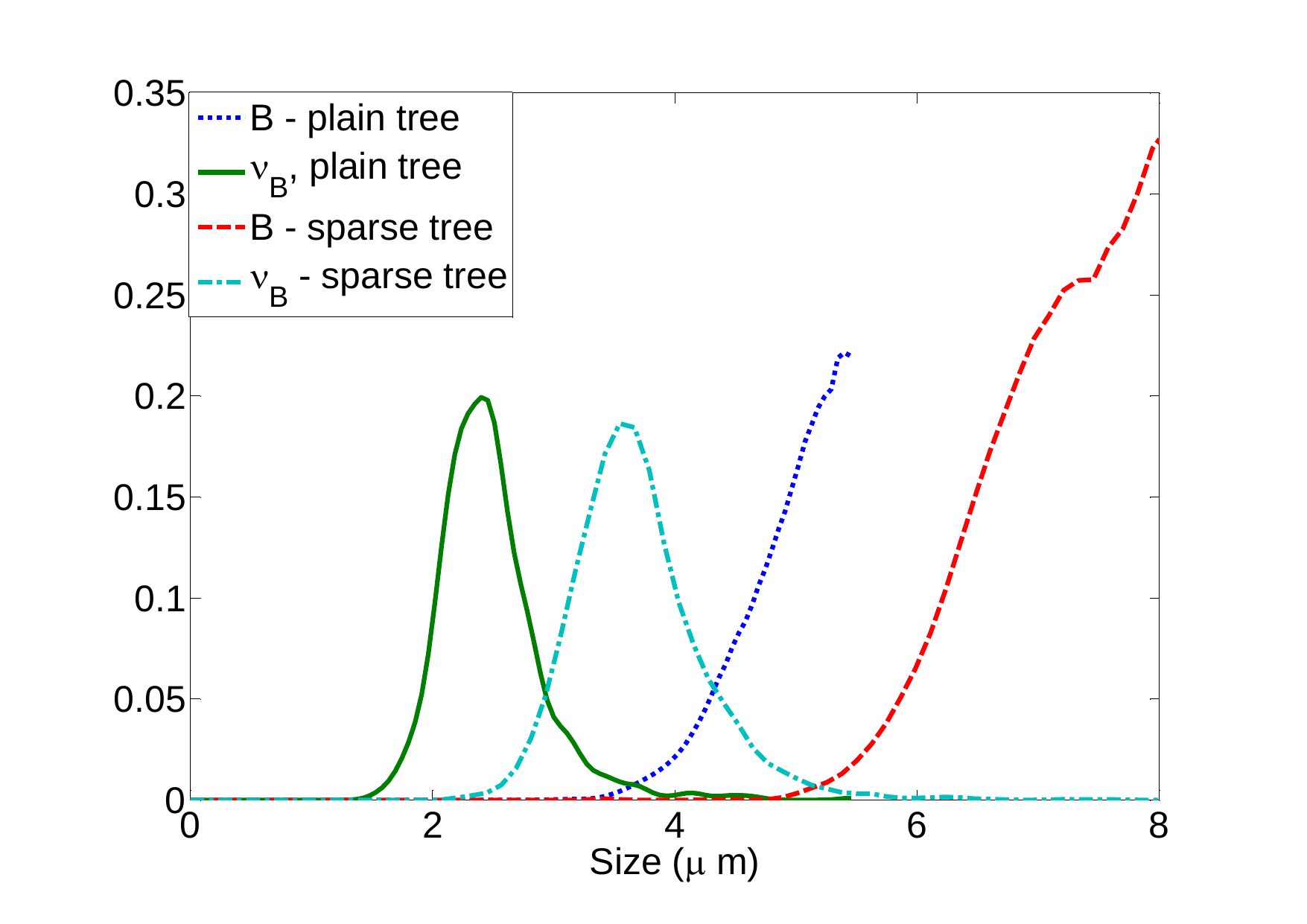}
\end{minipage} 
\caption{\label{fig:exper} {\it Estimation of $B$ (dotted line and dashed line resp.) and $\nu_B$ (solid line and dash-dotted line resp.) on experimental data of E. coli dividing cells, for resp. a sparse tree and a full tree experiment. $n=2335$ for the sparse tree, $n=748$ for the full tree. The experimental conditions being different (temperature and nutrient), these results are not supposed to be identical, yet a generic pattern appears for $B$, that could serve as a basis for future biological studies.}}
\end{figure}

\subsection{Link with the deterministic viewpoint} \label{num}
Considering the reconstruction formula~\eqref{def estimator}, let us give here some insight from a deterministic analysis perspective.  For the sake of clarity, let us focus on the simpler case when there is no variability, so that for all $u\in {\cal U}_n$ we have $\tau_u=\tau>0$ a fixed constant. Formula~\eqref{def estimator} comes from~\eqref{first rep}, which in the case $\tau_u=\tau $ simplifies further into
\begin{equation}\label{formule B}
B(y)=\frac{\tau y}{2}\frac{\nu_B(y/2)}{\E_{\nu_B}\Big[{\bf 1}_{\{\xi_{u^-} \leq y,\;\xi_u \geq y/2\}}\Big]}=\frac{\tau y}{2}\frac{\nu_B(y/2)}{\int_{y/2}^y \nu_B(z)dz}.
\end{equation}
We also notice  that, in this particular case, 
we do not need to measure the lifetime of each cell in order to implement \eqref{formule B}.
Define $N(y)=\tfrac{1}{2}\frac{\nu_B(y/2)}{B(y)},$ or equivalently $\nu_B(x)=2B(2x)N(2x).$ Differentiating \eqref{formule B}, we obtain 
$$
\partial_x(\tau x N)=2B(2x)N(2x)-B(x)N(x)
$$
which corresponds to the stationary state linked to the equation
\begin{equation}\left\{\begin{array}{l}
{\partial_t}n(t,x)+ \tau \partial_x \big(x\, n(t,x)\big)=2B(2x)n(t,2x)-B(x)n(t,x),
 \\ \\
n(0,x)= n^{(0)}(x), x \geq 0.
\end{array}
\right.\label{eq:transport-fragmentation conservative}
\end{equation}
Eq. \eqref{eq:transport-fragmentation conservative} exactly corresponds to the behaviour of the tagged cell of Section \ref{a many-to-one formula via a tagged branch} below, in a (weak) sense:
$$n(t,dx) = \PP(\chi(t)\in dx)$$
where $\chi(t)$ denotes the size at time $t$ along a branch picked at random, see Section \ref{a many-to-one formula via a tagged branch}. Existence and uniqueness of an invariant measure $\nu_B$ has an analogy to the existence of a steady state solution for the  PDE \eqref{eq:transport-fragmentation conservative}, and the convergence of the empirical measure to the invariant rejoins the stability of the steady state \cite{PPS3}. The equality $\nu_B(x)=2B(2x)N(2x)$ may be interpreted as follows: $N(x)$ is the steady solution of Eq.~\eqref{eq:transport-fragmentation conservative}, and represents the probability density of a cell population dividing at a rate $B$ and growing at a rate $x \tau$, but when only one offspring remains alive at each division so that the total quantity of cells remains constant. The fraction of dividing cells is represented by the term $B(x)N(x)$ in the equation, with distribution given by $\tfrac{1}{2}\nu_B(x/2),$ whereas the fraction of newborn cells is $2B(2x)N(2x)$. Eq.~\eqref{formule B} can be written in terms of $BN$ as
\begin{equation} \label{surprise}
B(y)=\frac{\tau y BN(y)}{\int_y^{2y} B(z)N(z) dz}.
\end{equation}
This also highlights why we obtain a rate of convergence of order $n^{-s/(2s+1)}$ rather than the rate $n^{-s/(2s+3)}$ obtained with indirect measurements as in \cite{DHRR}. In that latter case, we observe a $n$-sample with distribution $N$. As shown in \cite{DHRR}, one differentiation is necessary  to estimate $B$ therefore we have a degree of ill-posedness of order 1. In the setting of the present paper, we rather observe a sample with distribution $BN$,  and $B$ can be recovered directly from \eqref{surprise} and we have here a degree of ill-posedness of order $0$.
\section{Proofs} \label{proofs}
We first prove Theorem \ref{sol transport general} in Sections \ref{a many-to-one formula via a tagged branch} and \ref{proof th1}. The strategy consists in obtaining a so-called many-to-one formula (Proposition \ref{many-to-one}) that enables to relate additive functionals of the whole Markov tree to a special one-dimensional process that consists of following at random a branch on the tree. It suffices to check in Section \ref{proof th1} that this randomly tagged branch integrated against appropriate test functions satisfies the desired transport-fragmentation equation.  Section \ref{sec geo ergo} studies at length the Markov transition ${\mathcal P}_B$. We first prove the key representation formula for $B$ obtained in Proposition \ref{rep B via inv}. We then quantify the geometric ergodicity of the model by a standard Lyapunov technique (Proposition \ref{prop transition}). In Section \ref{covariance inequalities} we subsequently apply the geometric ergodicity of the transition ${\mathcal P}_B$ by establishing covariance inequalities on a tree in Propositions \ref{moment bound} and \ref{moment bound bis}; these are the crucial tools to later control the convergence rate of the estimator. We need in particular to study the covariance of delta-like functions with supremum norm increasing to infinity with our asymptotic, and this explains the relative technical length of our estimates. This enables to further control in Section \ref{rate of convergence for the empirical measure} a rate of convergence for the empirical measure in Propositions \ref{moment lemma} and \ref{moment lemma bis}. The fact that we work on a tree with an non-reversible Markov transition and delta-like test functions is an extra technical difficulty. Finally, we can prove Theorem \ref{upper bound} in Section \ref{final proof} for the rate of convergence of our estimator with a classical trade-off technique between a bias and a variance term, thanks to the tools developed in the preceding sections and in particular in Section \ref{further invariant} where some useful estimates for the invariant measure are established.

The notation $\lesssim$ means inequality up to a constant that does not depend on $n$. We set $a_n \sim b_n$ when $a_n \lesssim b_n$ and $b_n \lesssim a_n$ simultaneously. A mapping $f:{\mathcal E}\rightarrow \R$ or $g:[0,\infty)\rightarrow \R$ is implicitly identified as a function on ${\mathcal S}$ via $f(x,v)=f(x)$ and $g(x,v)=g(v)$.

\subsection{A many-to-one formula via a tagged cell} \label{a many-to-one formula via a tagged branch}
For $u \in {\mathcal U}$, we set $m^i u$ for the $i$-th parent along the genealogy of $u$. Define
$$\overline{\tau_t^u} = \sum_{i = 1}^{|u|}\tau_{m^i u}\zeta_{m^i u}+\tau_t^u(t-b_u)\;\;\text{for}\;\;t\in [b_u, b_u + \zeta_u)$$
and $0$ otherwise for the cumulated growth rate along its ancestors up to time $t$.  In the same spirit as tagged fragments in fragmentation processes (see the book by Bertoin \cite{bertoin} for instance) 
we pick a branch at random along the genealogical tree at random: for every $k \geq 1$, if $\vartheta_k$ denotes the node of the tagged cell at the $k$-th generation, we have
$$\PP(\vartheta_k = u)=2^{-k}\;\;\text{for every}\;\;u\in {\mathcal U}\;\;\text{such that}\;\;|u|=k,$$
and $0$ otherwise.
For $t\geq 0$,  the relationship
$$b_{\vartheta_{C_t}} \leq t < b_{\vartheta_{C_t}}+\zeta_{\vartheta_{C_t}}$$ 
uniquely defines a counting process $(C_t, t \geq 0)$ with $C_0=0$. The process $C_t$ enables in turn to define a tagged process 
of size, growth rate and cumulated growth rate via
$$
\big(\chi(t), {\mathcal V}(t), \overline{{\mathcal V}}(t)\big) = \Big(\xi_t^{\vartheta_{C_t}}, \tau_{t}^{\vartheta_{C_t}}, \overline{\tau_t^{\vartheta_{C_t}}}\Big)\;\;\text{for}\;\;t \in [b_{\vartheta_{C_t}},b_{\vartheta_{C_t}}+\zeta_{\vartheta_{C_t}})
$$
and $0$ otherwise. We have the representation
\begin{equation} \label{rep chi}
\chi(t) = \frac{xe^{\overline{{\mathcal V}}(t)}}{2^{C_t}}
\end{equation}
and since ${\mathcal V}(t) \in [e_{\min},e_{\max}]$, we note that
\begin{equation} \label{controle croissance cumulee}
e_{\min}t \leq \overline{{\mathcal V}}(t) \leq e_{\max}t.
\end{equation}
The behaviour of $\big(\chi(t), {\mathcal V}(t), \overline{{\mathcal V}}(t)\big)$ can be related to certain functionals of the whole particle system via a so-called many-to-one formula. 
This is the key tool to obtain Theorem  \ref{sol transport general}.
\begin{prop}[A many-to-one formula] \label{many-to-one}
Work under Assumption \ref{basic assumption}. For $x \in (0,\infty)$, let $\PP_x$ be defined as in Lemma \ref{counting property}. For every $t\geq 0$, we have
$$\E_x\big[\phi\big(\chi(t), {\mathcal V}(t), \overline{{\mathcal V}}(t)\big)\big] = \E_x\Big[\sum_{u \in {\mathcal U}}\xi_t^u\frac{e^{-\overline{\tau_t^u}}}{x}\phi\big(\xi_t^u, \tau_t^u, \overline{\tau_t^u}\big)\Big]$$
for every $\phi:{\mathcal S}\times [0,\infty) \rightarrow [0,\infty)$.
\end{prop}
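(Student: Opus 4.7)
The plan is to condition on the whole tree and identify the two sides pointwise, i.e.\ for each cell alive at time $t$. The identity to prove reduces to the deterministic equality
\[
2^{-|u|} \;=\; \frac{\xi_t^u\, e^{-\overline{\tau_t^u}}}{x}
\]
valid for every $u\in {\mathcal U}$ with $b_u \leq t < b_u+\zeta_u$, together with the observation that for $u$ not alive at time $t$ one has $\xi_t^u = 0$ by convention, so that such terms contribute zero to the right-hand side.

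First, I would handle the left-hand side. By the tagged-branch construction, conditionally on the whole tree, $\vartheta_k$ is uniform over $\{u:|u|=k\}$ and the counting process $C_t$ is such that the lineage $(\vartheta_k)_{k\geq 0}$ passes through the unique cell of that lineage alive at time $t$. Hence, for every $u$ alive at time $t$,
\[
\PP_x\big(\vartheta_{C_t}=u\,\big|\,\text{tree}\big) \;=\; 2^{-|u|},
\]
from which
\[
\E_x\big[\phi\big(\chi(t),{\mathcal V}(t),\overline{{\mathcal V}}(t)\big)\big]
\;=\; \E_x\Big[\sum_{u\in {\mathcal U}} 2^{-|u|}\,{\bf 1}_{\{b_u\leq t < b_u+\zeta_u\}}\,\phi\big(\xi_t^u,\tau_t^u,\overline{\tau_t^u}\big)\Big].
\]

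The core of the proof is then the deterministic identity above, which I would establish by induction on $|u|$ using \eqref{fundamental}. For $|u|=0$, $u=\emptyset$, so $\xi_t^\emptyset = x\,e^{\tau_\emptyset t}$ and $\overline{\tau_t^\emptyset} = \tau_\emptyset t$, which gives $\xi_t^\emptyset = x\,e^{\overline{\tau_t^\emptyset}}$, consistent with $2^0=1$. Assuming the identity for $u^-$ at its time of death $t = b_{u^-} + \zeta_{u^-} = b_u$, the relation \eqref{fundamental} yields $2\xi_u = \xi_{u^-}\exp(\tau_{u^-}\zeta_{u^-})$, whence $2^{|u|}\xi_u = x\exp\big(\sum_{i=1}^{|u|}\tau_{m^i u}\zeta_{m^i u}\big)$. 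Multiplying by $e^{\tau_u(t-b_u)}$ for $t\in [b_u,b_u+\zeta_u)$ and recognising the definition of $\overline{\tau_t^u}$, I obtain
\[
2^{|u|}\xi_t^u \;=\; x\,\exp\big(\overline{\tau_t^u}\big),
\]
as desired.

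Substituting this identity into the display for the left-hand side immediately yields the claim. There is no real obstacle here beyond bookkeeping: the only point that needs care is the inductive use of \eqref{fundamental} along the ancestral chain, and the convention that dead or unborn cells contribute nothing because their size is set to zero, so that the restriction to alive cells can be dropped from the right-hand sum. The whole argument is pathwise and does not require the assumption \ref{basic assumption} beyond ensuring that the process is well defined (Proposition \ref{existence processus}); the positivity of $\phi$ is only needed to allow unrestricted use of Fubini when exchanging $\E_x$ with the sum over ${\mathcal U}$.
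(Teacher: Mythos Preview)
Your proof is correct and follows essentially the same approach as the paper: both arguments condition on the tree (the paper uses the discrete filtration ${\mathcal H}_{|v|}$, you condition on the full tree, which is equally valid since the tagged branch is independent of the tree) and then invoke the deterministic identity $2^{-|u|}=\xi_t^u e^{-\overline{\tau_t^u}}/x$. The only cosmetic difference is that the paper cites this identity as the already-stated representation \eqref{rep chi}, whereas you spell out its inductive proof from \eqref{fundamental}.
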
 
\begin{proof}[Proof of Proposition \ref{many-to-one}]
For $v \in {\mathcal U}$, set $I_v = [b_{v}, b_v+\zeta_v)$. By representation \eqref{rep chi}, we have
\begin{align*}
\E_{x}\big[\phi\big(\chi(t), {\mathcal V}(t), \overline{{\mathcal V}}(t)\big)\big] & = \E_{x}\big[\phi\big(\tfrac{xe^{\overline{{\mathcal V}}(t)}}{2^{C_t}}, {\mathcal V}(t), \overline{{\mathcal V}}(t)\big)\big] \\
& =  \E_{x}\Big[\sum_{v \in {\mathcal U}}\phi\big(\tfrac{xe^{\overline{\tau_t^v}}}{2^{|v|}}, \tau_t^v, \overline{\tau_t^v}\big){\bf 1}_{\{t\in I_v, v=\vartheta_{C_t}\}}\Big].
\end{align*}
Introduce the discrete filtration ${\mathcal H}_n$ generated by $(\xi_u, \zeta_u, \tau_u)$ for every $u$ such that $|u|\leq n$. Conditioning with respect to ${\mathcal H}_{|v|}$ and noting that on $\{t \in I_v\}$, we have
$$\PP\big(\vartheta_{C_t} = v\,|\,{\mathcal H}_{|v|}\big)=\frac{1}{2^{|v|}} = \frac{\xi_v e^{-\overline{\tau_{b_v}^v}}}{x},$$
we derive
\begin{align*}
\E_{x}\Big[\sum_{v \in {\mathcal U}}\phi\big(\tfrac{xe^{\overline{\tau_t^v}}}{2^{|v|}}, \tau_t^v, \overline{\tau_t^v}\big){\bf 1}_{\{t\in I_v, v=\vartheta_{C_t}\}}\Big]  = \,&\E_{x}\Big[\sum_{v \in {\mathcal U}}\xi_v \frac{e^{-\overline{\tau_{b_v}^v}}}{x}\phi\big(\tfrac{xe^{\overline{\tau_t^v}}}{2^{|v|}}, \tau_t^v, \overline{\tau_t^v}\big){\bf 1}_{\{t\in I_v\}}\Big] \\
=\,& \E_x\Big[\sum_{u \in {\mathcal U}}\xi_t^u\frac{e^{-\overline{\tau_t^u}}}{x}\phi\big(\xi_t^u, \tau_t^u, \overline{\tau_t^u}\big)\Big].
\end{align*}
\end{proof}
\subsection{Proof of Theorem \ref{sol transport general}} \label{proof th1}
We fix $x \in (0,\infty)$ and prove the result for an initial measure $\mu_x$ as in Proposition \ref{many-to-one}. Let $\varphi \in {\mathcal C}^1_0({\mathcal S})$ be nonnegative. By \eqref{identity point measures} we have
$$
\langle n(t,\cdot), \varphi \rangle  = \E_x\big[\sum_{i=1}^\infty\varphi\big(X_i(t),Z_i(t)\big)\big] 
 = \E_x\big[\sum_{u \in {\mathcal U}}\varphi(\xi_t^u,\tau_t^u)\big] 
$$
and applying Proposition \ref{many-to-one}, we derive 
\begin{equation} \label{n via many-to-one}
\langle n(t,\cdot), \varphi \rangle =x\, \E_x\Big[\varphi\big(\chi(t), {\mathcal V}(t)\big)\frac{e^{\overline{{\mathcal V}}(t)}}{\chi(t)}\Big].
\end{equation}
For $h>0$, introduce the difference operator 
$$\Delta_h f(t) = h^{-1}\big(f(t+h)-f(t)\big).$$
We plan to study the convergence of $ \Delta_h\langle n(t,\cdot),\varphi \rangle$ as $h\rightarrow 0$ using representation \eqref{n via many-to-one} in restriction to the events $\{C_{t+h}-C_t=i\}$, for $i=0,1$ and $\{C_{t+h}-C_t \geq 2\}$.
Denote by ${\mathcal F}_t$ the filtration generated by the tagged cell $\big(\chi(s), {\mathcal V}(s),s \leq t\big)$. The following standard estimate proved in Appendix \ref{A1} will be later useful.
\begin{lemma} \label{counting property}
Assume that $B$ is continuous.  Let $x\in (0,\infty)$ and let $\mu_x$ be a probability measure on ${\mathcal S}$ such that $\mu_x(\{x\}\times {\mathcal E})=1$. Abbreviate $\PP_{\mu_x}$ by $\PP_x$. For small $h>0$, we have
$$\PP_x(C_{t+h}-C_t = 1\,|\,{\mathcal F}_t)=B\big(\chi(t)\big)h+h\,\varepsilon(h),$$
with the property $|\varepsilon(h)| \leq \epsilon(h) \rightarrow 0$ as $h \rightarrow 0$, for some deterministic $\epsilon(h)$, and
$$\PP_x(C_{t+h}-C_t \geq 2) \lesssim h^2.$$
\end{lemma}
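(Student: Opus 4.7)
The plan is to use the definition of the division rate \eqref{def div rate} together with the piecewise deterministic Markov structure of the tagged cell trajectory. The key observation is that, conditionally on ${\mathcal F}_t$, the tagged cell is, at time $t$, of size $\chi(t)$ and growth rate ${\mathcal V}(t)\in {\mathcal E}\subset [e_{\min},e_{\max}]$, and between two division events its size evolves deterministically as $\chi(t)e^{{\mathcal V}(t)s}$. Moreover, since sizes can only shrink at divisions, one has the deterministic upper bound $\chi(s)\leq x\,e^{e_{\max} s}$ almost surely for every $s\geq 0$; in particular, on the time interval $[t,t+h]$ the size of the tagged cell (before any division that might occur in this interval) remains in the compact set $[0,x\,e^{e_{\max}(t+h)}]$, on which the continuous function $B$ is bounded and uniformly continuous.

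Using the definition of the lifetime distribution \eqref{def div rate} and the strong Markov property applied at time $t$ for the piecewise deterministic process, the probability that the tagged cell does not divide during $[t,t+h]$ is
\begin{equation*}
\PP_x\bigl(C_{t+h}-C_t=0\,\big|\,{\mathcal F}_t\bigr)=\exp\Bigl(-\int_0^h B\bigl(\chi(t)e^{{\mathcal V}(t)s}\bigr)\,ds\Bigr).
\end{equation*}
By uniform continuity of $B$ on $[0,x\,e^{e_{\max}(t+h)}]$, one can write $\int_0^h B(\chi(t)e^{{\mathcal V}(t)s})\,ds=B(\chi(t))h+h\,\eta(h)$ with $|\eta(h)|\leq \epsilon(h)$ for some deterministic $\epsilon(h)$ depending only on $x$, $t$ and the modulus of continuity of $B$ on the above compact interval, and $\epsilon(h)\to 0$ as $h\to 0$. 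A first-order Taylor expansion of the exponential then yields
\begin{equation*}
\PP_x\bigl(C_{t+h}-C_t\geq 1\,\big|\,{\mathcal F}_t\bigr)=B(\chi(t))h+h\,\widetilde\varepsilon(h),
\end{equation*}
with $|\widetilde\varepsilon(h)|\leq \widetilde\epsilon(h)\to 0$ deterministically.

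To handle the event $\{C_{t+h}-C_t\geq 2\}$, I would decompose according to the first division time $\sigma_1\in[t,t+h]$. At time $\sigma_1$, the tagged cell has a new size $\chi(\sigma_1)/2\cdot 2=\chi(\sigma_1)$ inherited from its parent by \eqref{fundamental}, but still bounded by $x e^{e_{\max}(t+h)}$, and a new growth rate inherited via $\rho$, still in ${\mathcal E}$. Since $B$ is bounded by some constant $M_B$ on $[0, x e^{e_{\max}(t+h)}]$, the conditional hazard of a second division in $(\sigma_1,t+h]$ is at most $M_B\,(t+h-\sigma_1)$. Integrating against the density of $\sigma_1$ (which is itself bounded by $M_B$) and taking unconditional expectations gives
\begin{equation*}
\PP_x(C_{t+h}-C_t\geq 2)\leq \int_0^h M_B\cdot M_B(h-s)\,ds=\tfrac{1}{2}M_B^2 h^2\lesssim h^2.
\end{equation*}
Combining the two expansions via $\PP_x(C_{t+h}-C_t=1\mid{\mathcal F}_t)=\PP_x(C_{t+h}-C_t\geq 1\mid{\mathcal F}_t)-\PP_x(C_{t+h}-C_t\geq 2\mid{\mathcal F}_t)$ and absorbing the $O(h^2)$ correction into the $h\,\varepsilon(h)$ term yields the claim.

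The only technical point worth flagging is ensuring that $\varepsilon(h)$ is genuinely deterministic, i.e.\ can be chosen independently of the realisation of ${\mathcal F}_t$: this is precisely what the a.s.\ bound $\chi(t)\leq x e^{e_{\max} t}$, combined with the uniform continuity of $B$ on the resulting compact set, provides. No further probabilistic input is needed beyond the Markov property and the definition \eqref{def div rate}.
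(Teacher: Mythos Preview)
Your proof is correct and follows essentially the same route as the paper: expand $\PP_x(C_{t+h}-C_t\geq 1\mid {\mathcal F}_t)$ via the hazard of the current tagged cell using \eqref{def div rate}, invoke uniform continuity of $B$ on the deterministic compact $[0,x e^{e_{\max}(t+h)}]$ to extract $B(\chi(t))h$, and for $\{C_{t+h}-C_t\geq 2\}$ condition on the first jump time (the paper introduces the stopping time $\Upsilon_t=\inf\{s>t: C_s-C_t\geq 1\}$, which is your $\sigma_1$) and use the same sup-bound $M_B$ twice. One cosmetic slip: your sentence ``the tagged cell has a new size $\chi(\sigma_1)/2\cdot 2=\chi(\sigma_1)$'' is garbled --- at a division the tagged size halves, so $\chi(\sigma_1)=\chi(\sigma_1^-)/2$ --- but this does not affect the argument since the only property you use is the a.s.\ bound $\chi(s)\leq x e^{e_{\max} s}$, which you state correctly and which survives divisions.
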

Since $\varphi \in {\mathcal C}^1_0({\mathcal S})$, there exists $c(\varphi)>0$ such that $\varphi(y,v)=0$ if $y \geq c(\varphi)$. By \eqref{controle croissance cumulee}, we infer 
\begin{equation} \label{integrabilite reste}
\Big|\varphi\big(\chi(t), {\mathcal V}(t)\big)\frac{e^{\overline{{\mathcal V}}(t)}}{\chi(t)}\Big| \leq \sup_{y,v}\varphi(y,v)\frac{\exp(e_{\max}t)}{c(\varphi)}
\end{equation}
By Lemma \ref{counting property} and \eqref{integrabilite reste}, we derive
\begin{equation} \label{first estimate poisson}
\E_x\Big[\Delta_h\Big(\varphi\big(\chi(t), {\mathcal V}(t)\big)\frac{e^{\overline{{\mathcal V}}(t)}}{\chi(t)}\Big){\bf 1}_{\{C_{t+h}-C_t \geq 2\}}\Big] \lesssim h.
\end{equation}
On the event $\{C_{t+h}-C_t=0\}$, the process ${\mathcal V}(s)$ is constant for $s \in [t,t+h)$ and so is $\frac{e^{\overline{{\mathcal V}}(s)}}{\chi(s)}$ thanks to \eqref{rep chi}. It follows that
$$\Delta_h\Big(\varphi\big(\chi(t), {\mathcal V}(t)\big)\frac{e^{\overline{{\mathcal V}}(t)}}{\chi(t)}\Big)= \Delta_h\varphi\big(\chi(t), {\mathcal V}(s)\big)_{\big|_{s=t}}\frac{e^{\overline{{\mathcal V}}(t)}}{\chi(t)}$$
on $\{C_{t+h}-C_t=0\} $ and also
$$
\Big|\Delta_h\varphi\big(\chi(t), {\mathcal V}(s)\big)_{\big|_{s=t}}\frac{e^{\overline{{\mathcal V}}(t)}}{\chi(t)}\Big| \leq \sup_{y,v}|\partial_y\varphi(y,v)|xe_{\max}  \frac{\exp(2e_{\max}t)}{c(\varphi)}
$$
on $\{C_{t+h}-C_t=0\}$ likewise. Since $\PP_x(C_{t+h}-C_t=0)\rightarrow 1$ as $h\rightarrow 0$, by dominated convergence
\begin{align}
& \;x\,\E_x\Big[\Delta_h\Big(\varphi\big(\chi(t), {\mathcal V}(t)\big)\frac{e^{\overline{{\mathcal V}}(t)}}{\chi(t)}\Big){\bf 1}_{\{C_{t+h}-C_t =0\}}\Big] \nonumber \\
\rightarrow &\; x\,\E_x\big[\partial_1\varphi\big(\chi(t), {\mathcal V}(t)\big){\mathcal V}(t)e^{\overline{{\mathcal V}}(t)}\big]\;\;\text{as}\;\;h\rightarrow 0.
\label{convergence poisson 2}
\end{align}
By Proposition \ref{many-to-one} again, this last quantity is equal to $\langle n(t,dx,dv), xv\,\partial_x \varphi\rangle$. On $\{C_{t+h}-C_t=1\}$, we successively have
$$\chi(t+h) = \frac{1}{2}\chi(t)+\varepsilon_1(h),$$
$$\varphi\big(\chi(t+h), {\mathcal V}(t+h)\big) = \varphi\big(\chi(t)/2, {\mathcal V}(t+h)\big)+\varepsilon_2(h)$$
and
$$\exp\big(\overline{{\mathcal V}}(t+h)\big) = \exp\big(\overline{{\mathcal V}}(t)\big)+\varepsilon_3(h)$$
with the property $|\varepsilon_i(h)|\leq \epsilon_1(h) \rightarrow 0$ as $h\rightarrow 0$, where $\epsilon_1(h)$ is deterministic, thanks to \eqref{rep chi} and \eqref{controle croissance cumulee}. Moreover, 
$${\mathcal V}(t+h) = \tau_{\vartheta_{C_t+1}}\;\;\text{on}\;\;\{C_{t+h}-C_t=1\}.$$
It follows that
\begin{align*}
& \E_x\Big[\varphi\big(\chi(t+h), {\mathcal V}(t+h)\big)\frac{e^{\overline{{\mathcal V}}(t+h)}}{\chi(t+h)}{\bf 1}_{\{C_{t+h}-C_t =1\}}\Big] \\
= &\; \E_x\Big[\varphi\big(\chi(t)/2, \tau_{\vartheta_{C_t+1}}\big)\frac{2e^{\overline{{\mathcal V}}(t)}}{\chi(t)}{\bf 1}_{\{C_{t+h}-C_t =1\}}\Big]+\epsilon_2(h) \\
= &\; \E_x\Big[\varphi\big(\chi(t)/2, \tau_{\vartheta_{C_t+1}}\big)\frac{2e^{\overline{{\mathcal V}}(t)}}{\chi(t)}{\bf 1}_{\{C_{t+h}-C_t \geq 1\}}\Big]+\epsilon_3(h)
\end{align*}
where $\epsilon_2(h), \epsilon_3(h)\rightarrow 0$ as $h\rightarrow 0$, and where we used the second part of Lemma \ref{counting property} in order to obtain the last equality. Conditioning with respect to ${\mathcal F}_t \bigvee \tau_{\vartheta_{C_t+1}}$ and using that
$\{C_{t+h}-C_t \geq 1\}$ and $\tau_{\vartheta_{C_t+1}}$ are independent, applying the first part of Lemma \ref{counting property}, this last term is equal to
\begin{align*}
&  \E_x\Big[\varphi\big(\chi(t)/2, \tau_{\vartheta_{C_t+1}}\big)\frac{2e^{\overline{{\mathcal V}}(t)}}{\chi(t)}B\big(\chi(t)\big)h\Big] + \epsilon_4(h)\\
=\;&  \E_x\Big[\int_{{\mathcal E}}\varphi\big(\chi(t)/2, v'\big)\rho\big({\mathcal V}(t), dv'\big)\frac{2e^{\overline{{\mathcal V}}(t)}}{\chi(t)}B\big(\chi(t)\big)h\Big] + \epsilon_4(h)
\end{align*}
where $\epsilon_4(h)\rightarrow 0$ as $h \rightarrow 0$. Finally, using Lemma \ref{counting property} again, we derive
\begin{align}
& \E_x\Big[\Delta_h\Big(\varphi\big(\chi(t), {\mathcal V}(t)\big)\frac{e^{\overline{{\mathcal V}}(t)}}{\chi(t)}\Big){\bf 1}_{\{C_{t+h}-C_t =1\}}\Big] \nonumber \\
\rightarrow &\;\E_x\Big[\Big(\int_{{\mathcal E}}2\varphi\big(\chi(t)/2,v'\big)\rho\big({\mathcal V}(t), dv'\big)-\varphi\big(\chi(t),{\mathcal V}(t)\big)\Big)\frac{e^{\overline{{\mathcal V}}(t)}}{\chi(t)}B\big(\chi(t)\big)\Big]
\label{convergence poisson 1}
\end{align}
as $h \rightarrow 0$. By Proposition \ref{many-to-one}, this last quantity is equal to
$$\big\langle n(t,dx,dv), \big(\int_{{\mathcal E}}2\varphi(x/2,v')\rho(v,dv')-\varphi(x,v)\big)B(x)\big\rangle$$
which, in turn,  is equal to 
$$\big\langle n(t,2dx,dv),\int_{{\mathcal E}}4\varphi(x,v')\rho(v,dv')B(2x)\big\rangle-\big\langle n(t,dx,dv),\varphi(x,v)B(x)\big\rangle$$
by a simple change of variables. Putting together the estimates \eqref{first estimate poisson}, \eqref{convergence poisson 2} and \eqref{convergence poisson 1}, we conclude
\begin{align*}
&\partial_t \langle n(t,dx,dv), \varphi\rangle - \langle n(t,dx,dv), xv\partial_x \varphi\rangle + \langle n(t,dx,dv)B(x),\varphi\rangle \\
 = & \;  \big\langle n(t,2dx,dv),\int_{{\mathcal E}}4\varphi(x,v')\rho(v,dv')B(2x)\big\rangle,
\end{align*}
which is the dual formulation of \eqref{transport variabilite}. The proof is complete.
\subsection{Geometric ergodicity of the discrete model} \label{sec geo ergo}
We keep up with the notations of Sections \ref{microscopic model} and \ref{statistical analysis}. We first prove Proposition \ref{rep B via inv}.
\subsubsection*{Proof of Proposition \ref{rep B via inv}}
The fact that $\nu_B(d{\boldsymbol x}) = \nu_B(x,dv)dx$ readily follows from the representation ${\mathcal P}_{B}(\boldsymbol{x},d\boldsymbol{x'})  ={\mathcal P}_B\big((x,v), x', dv')dx'$ together with the invariant measure equation \eqref{def mesure invariante}. It follows that for every $y \in (0,\infty)$,
\begin{align*}
\nu_B(y, dv') & = \int_{{\mathcal S}} \nu_B(x,dv)dx\,{\mathcal P}_B\big((x,v),y,dv'\big) \\
& = \frac{B(2y)}{y} \int_{{\mathcal E}}\int_{0}^{2y}\nu_B(x, dv)\exp\big(-\int_{x/2}^y\tfrac{B(2s)}{v s}ds\big)\frac{\rho(v,dv')}{v}dx.
\end{align*}
By Assumption \ref{basic assumption}, we have $\int_{x/2}^\infty \tfrac{B(2s)}{s}ds=\infty$ hence
$$\exp\big(-\int_{x/2}^y \tfrac{B(2s)}{v s}ds\big)= \int_{y}^\infty\tfrac{B(2s)}{v s}\exp\big(-\int_{x/2}^s\tfrac{B(2s')}{v s'}ds'\big)ds,$$
therefore $\nu_B(y,dv')$ is equal to 
\begin{align*}
\,&\frac{B(2y)}{y} \int_{{\mathcal E}}\int_{0}^{2y}\nu_B(x, dv)dx \int_{y}^\infty\tfrac{B(2s)}{v s}\exp\big(-\int_{x/2}^s\tfrac{B(2s')}{v s'}ds'\big)ds\frac{\rho(v,dv')}{v} \\
=\;& \frac{B(2y)}{y} \int_{{\mathcal S}}\int_{[0,\infty)}{\bf 1}_{\displaystyle \{x\leq 2y, s \geq y\}}v^{-1} \nu_B(x,dv)dx\,{\mathcal P}_B\big((x,v),s,dv'\big)ds. 
\end{align*}
Integrating with respect to $dv'$, we obtain the result.
\subsubsection*{Geometric ergodicity}
We extend ${\mathcal P}_B$ as an operator acting on functions $f:{\mathcal S}\rightarrow [0,\infty)$ via
$${\mathcal P}_Bf(\boldsymbol{x}) = \int_{{\mathcal S}}f(\boldsymbol{y}){\mathcal P}_B(\boldsymbol{x},d\boldsymbol{y})$$
If $k\geq 1$ is an integer, define ${\mathcal P}_B^k={\mathcal P}_B^{k-1}\circ {\mathcal P}_B$.
\begin{prop} \label{prop transition}
Let $\mathfrak{c}$ satisfy Assumption \ref{contrainte constante}. Then, for every $B \in {\mathcal F}^\lambda(\mathfrak{c})$ and $\rho \in {\mathcal M}(\rho_{\min})$, there exists a unique invariant probability measure of the form $\nu_B(d\boldsymbol{x}) = \nu_B(x,dv)dx$ on ${\mathcal S}$.
Moreover, there exist $0<\gamma<1$, a function ${\mathbb V}: {\mathcal S} \rightarrow [1,\infty)$ and a constant $R$ such that
\begin{equation} \label{contraction}
\sup_{B \in {\mathcal F}^\lambda(\mathfrak{c}), \rho \in {\mathcal M}(\rho_{\min})}\sup_{|g| \leq V}\big|{\mathcal P}^k_Bg(\boldsymbol{x})-\int_{{\mathcal S}}g(\boldsymbol{z})\nu_B(d\boldsymbol{z})\big| \leq R{\mathbb V}(\boldsymbol{x})\gamma^k
\end{equation}
for every $\boldsymbol{x}\in {\mathcal S}$, $k\geq 0$, and where the supremum is taken over all functions $g: {\mathcal S} \rightarrow \R$ satisfying $|g(\boldsymbol{x})| \leq {\mathbb V}(\boldsymbol{x})$ for all $\boldsymbol{x}\in {\mathcal S}$. 
Moreover, under Assumption \ref{the full tree assumption}, we can take $\gamma < \frac{1}{2}$.
Finally, the function ${\mathbb V}$ is $\nu_B$-integrable for every $B \in {\mathcal F}^{\lambda}({\mathfrak{c}})$ and \eqref{contraction} is well defined.
\end{prop}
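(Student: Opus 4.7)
The plan is to invoke the Meyn--Tweedie framework for $V$-geometric ergodicity: establish a Foster--Lyapunov drift inequality for ${\mathcal P}_B$ with respect to ${\mathbb V}$, combined with a uniform minorisation on a small set, and then apply a quantitative version of the ergodic theorem as in Baxendale \cite{B}.

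For the drift step, I would use the explicit kernel \eqref{density explicit}. Since ${\mathbb V}$ depends only on $x$, the $v'$-integration is trivial, and after the change of variable $y=2x'$ one obtains
\begin{equation*}
{\mathcal P}_B{\mathbb V}(x,v)={\mathbb V}(x/2)\,\E\big[\exp\big(\tfrac{m}{e_{\min}\lambda}2^{-\lambda}(Y^\lambda-x^\lambda)\big)\big],
\end{equation*}
where $Y$ is distributed as $2\xi_u$ given $(\xi_{u^-},\tau_{u^-})=(x,v)$. For $x\geq r$, the polynomial lower bound \eqref{poly control} forces $Y^\lambda-x^\lambda$ to be stochastically dominated by an exponential variable with rate at least $m/(e_{\max}\lambda)$; evaluating the resulting moment generating function (finite under the algebraic constraints embedded in $\delta(\mathfrak{c})<1$) and combining with the estimate ${\mathbb V}(x/2)/{\mathbb V}(x)\leq \exp\big(-(1-2^{-\lambda})(m/(e_{\max}\lambda))r^\lambda\big)$ valid for $x\geq r$ yields the drift
\begin{equation*}
{\mathcal P}_B{\mathbb V}(\boldsymbol{x})\leq \delta(\mathfrak{c})\,{\mathbb V}(\boldsymbol{x})+C(\mathfrak{c})\,\mathbf{1}_{\{x\leq r\}},
\end{equation*}
uniformly over $(B,\rho)\in{\mathcal F}^\lambda(\mathfrak{c})\times {\mathcal M}(\rho_{\min},\rho_{\max})$.

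The minorisation on $K=[0,r]\times{\mathcal E}$ is routine: for $\boldsymbol{x}=(x,v)\in K$ and $x'\in [r/2,r]$, the local control \eqref{loc control} provides uniform upper and lower bounds on the factors entering \eqref{density explicit}, while $\rho(v,\cdot)\geq \rho_{\min}$ takes care of the $v'$-component, giving ${\mathcal P}_B(\boldsymbol{x},d\boldsymbol{x}')\geq \kappa\,\mathbf{1}_{[r/2,r]}(x')\,dx'\,\rho_{\min}(dv')$ for some $\kappa=\kappa(\mathfrak{c})>0$. A quantitative Meyn--Tweedie--Baxendale theorem (see \cite{B}) then delivers existence and uniqueness of $\nu_B$, the contraction \eqref{contraction} for some $\gamma=\gamma(\mathfrak{c})<1$ and $R=R(\mathfrak{c})$ uniform over the class, and $\nu_B$-integrability of ${\mathbb V}$ via Foster's criterion applied to the drift. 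The decomposition $\nu_B(d\boldsymbol{x})=\nu_B(x,dv)\,dx$ follows by reinjecting \eqref{density explicit} into $\nu_B{\mathcal P}_B=\nu_B$. The sharpening $\gamma<\tfrac{1}{2}$ under Assumption \ref{the full tree assumption} is essentially a direct translation of \eqref{condition norme operateur}: any $\gamma$ strictly above $\sup_B\gamma_{B,{\mathbb V}}$ satisfies \eqref{contraction}, possibly with a larger $R$.

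The main obstacle is the drift step: identifying the sharp constant $\delta(\mathfrak{c})$ requires careful tracking of the boundary term in an integration by parts and of the implicit algebraic conditions on $(m,r,\lambda,e_{\min},e_{\max})$ needed for the relevant exponential moment to be finite, all the while maintaining uniformity in $B\in{\mathcal F}^\lambda(\mathfrak{c})$. Everything else is mechanical once the drift is in place.
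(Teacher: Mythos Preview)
Your overall strategy (Foster--Lyapunov drift for ${\mathbb V}$, minorisation on $[0,r]\times{\mathcal E}$, then Baxendale's quantitative theorem) is exactly the paper's, and the drift step is morally the same computation: the paper obtains it via an integration by parts and a change of variable rather than your stochastic-domination/MGF phrasing, but the two are equivalent once unwound.

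There is, however, a gap in your minorisation. You claim that for $\boldsymbol{x}\in[0,r]\times{\mathcal E}$ and $x'\in[r/2,r]$ the constraints \eqref{loc control}--\eqref{poly control} yield uniform bounds on all factors in \eqref{density explicit}, hence ${\mathcal P}_B(\boldsymbol{x},d\boldsymbol{x}')\geq\kappa(\mathfrak{c})\,\mathbf{1}_{[r/2,r]}(x')\,dx'\,\rho_{\min}(dv')$ with $\kappa$ independent of $B$. But the class $\mathcal{F}^\lambda(\mathfrak{c})$ imposes \emph{no upper bound} on $B$ over $[r,2r]$ (only the lower bounds $B(y)\geq my^\lambda$ and $\int_{r/2}^{r}B(2s)/s\,ds\geq\ell$), so the factor $\exp\big(-\int_{x/2}^{x'}\tfrac{B(2s)}{vs}\,ds\big)$ can be driven to zero over $x'\in[r/2,r]$ by taking $B$ large on $[r,2r]$; no $B$-independent minorisation density exists, and the uniformity in $B$ required for \eqref{contraction} is lost. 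The paper circumvents this by allowing the minorisation \emph{measure} to depend on $B$, namely $\Gamma_B(dx',dv')\propto\tfrac{B(2x')}{e_{\max}x'}\exp\big(-\int_0^{x'}\tfrac{B(2s)}{e_{\min}s}\,ds\big)\mathbf{1}_{[r/2,\infty)}(x')\,dx'\,\rho_{\min}(dv')$, while showing that its normalising constant---and hence the minorisation coefficient $\tilde\beta$---is bounded below by $\tfrac{e_{\min}}{e_{\max}}e^{-L/e_{\min}}$ uniformly in $B$, using only the first half of \eqref{loc control}. The paper then checks a separate strong aperiodicity condition $\tilde\beta\,\Gamma_B([0,r]\times{\mathcal E})\geq\beta>0$ via the second half of \eqref{loc control}, which your outline omits but which Baxendale's theorem requires as an input.
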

We will show in the proof that the function $\mathbb{V}$ defined in  \eqref{first def Lyapu} satisfies the properties announced in Proposition \ref{prop transition}.
\subsubsection*{Proof of Proposition \ref{prop transition}}
We follow the classical line of establishing  successively  a condition of minorisation, strong aperiodicity and drift for the transition operator ${\mathcal P}_B$ (see for instance
\cite{MT,B, FMP}. We keep in with the notation of Baxendale \cite{B}). Recall that $0<e_{\min}\leq e_{\max}$ is such that ${\mathcal E} \subset [e_{\min},e_{\max}]$.
\begin{proof}[Minorisation condition] Let $B \in {\mathcal F}^\lambda(\mathfrak{c})$. Define
\begin{equation} \label{def varphi}
\varphi_B(y) = \frac{B(2y)}{e_{\max} y}\exp\big(-\int_0^y \tfrac{B(2s)}{e_{\min} s}ds\big).
\end{equation}
Set ${\mathcal C} = (0, r)\times {\mathcal E}$, where $r$ is specified by $\mathfrak{c}$.
For any measurable $\mathcal{X} \times A \subset {\mathcal S}$ and $(x,v) \in {\mathcal C}$, we have
\begin{align*}
{\mathcal P}_B\big((x,v),\mathcal{X}\times A\big) & = \int_{A}\rho(v,dv')\int_{{\mathcal X} \cap [x/2,\infty]} \frac{B(2y)}{v y}\exp\big(-\int_{x/2}^y \tfrac{B(2s)}{v s}ds\big)dy \\
& \geq \rho_{\min}(A)\int_{{\mathcal X} \cap [r/2,\infty]} \varphi_B(y)dy.
\end{align*}
Define 
$$\Gamma_B(dy,dv)=c_B^{-1}{\bf 1}_{[r/2,\infty)}(y)\varphi_B(y)dy\,\tfrac{\rho_{\min}(dv)}{\rho_{\min}({\mathcal E})},$$
where
\begin{align*}
c_B  = \rho_{\min}({\mathcal E})\int_{r/2}^\infty \varphi_B(y)dy 
\geq \frac{e_{\min}\rho_{\min}({\mathcal E})}{e_{\max}}\exp\big(-\tfrac{L}{e_{\min}}) =: \widetilde \beta >0
\end{align*}
by \eqref{loc control} since $B \in {\mathcal F}^\lambda(\mathfrak{c})$.
We have thus
exhibited a small set ${\mathcal C}$, a probability measure $\Gamma_B$ and a constant $\widetilde \beta >0$ so that the minorisation condition
\begin{equation} \label{minorisation}
{\mathcal P}_B\big((x,v),{\mathcal X}\times A\big) \geq \tilde \beta\, \Gamma_B({\mathcal X} \times A)
\end{equation}
holds for every $(x,v)\in {\mathcal C}$ and ${\mathcal X}\times A \subset {\mathcal S}$, uniformly in $B \in {\mathcal F}^\lambda(\mathfrak{c})$.
\end{proof}
\begin{proof}[Strong aperiodicity condition] We have
\begin{align}
\tilde \beta\, \Gamma_B({\mathcal C})& = \widetilde \beta\, c_B^{-1}\int_{{\mathcal E}}\rho_{\min}(dv)\int_{r/2}^r \frac{B(2y)}{e_{\max} y}\exp\big(-\int_{x/2}^y \tfrac{B(2s)}{e_{\min} s}ds\big)dy  \nonumber\\
& \geq \widetilde \beta\, c_B^{-1}\int_{r/2}^r \varphi_B(y)dy \nonumber\\
& \geq \widetilde \beta \big(1-\exp\big(-\int_{r/2}^{r}\tfrac{B(2y)}{e_{\min} y}dy\big)\big) \nonumber\\
& \geq \widetilde \beta (1-\exp(-\tfrac{\ell}{e_{\min}})\big) =: \beta >0 \label{aperiodicity}
\end{align}
where we applied \eqref{loc control} for the last inequality.
\end{proof}
\begin{proof}[Drift condition] Let $B \in {\mathcal F}^\lambda(\mathfrak{c})$. 
Let ${\mathbb V}:{\mathcal S}\rightarrow [1,\infty)$ be continuously differentiable and such that for every $v\in {\mathcal E}$,
\begin{equation} \label{control V alinfini}
\lim_{y \rightarrow \infty} {\mathbb V}(y,v)\exp\big(-2^\lambda \tfrac{m}{v \lambda}y^\lambda\big)=0.
\end{equation}
For $x \geq r$, by \eqref{poly control} and integrating by part with the boundary condition \eqref{control V alinfini}, we have, for every $v \in {\mathcal E}$, 
\begin{align}
{\mathcal P}_B{\mathbb V}(x,v)  & = \int_{{\mathcal E}}\rho(v,dv') \int_{x/2}^\infty {\mathbb V}(y,v')\frac{B(2y)}{v y} \exp\big(-\int_{x/2}^y \tfrac{B(2s)}{v s}ds\big)dy \nonumber \\
& \leq  \int_{\mathcal {E}}\rho(v,dv') \int_{x/2}^\infty \partial_y {\mathbb V}(y,v') \exp\big(-\tfrac{m2^\lambda}{v}\int_{x/2}^y s^{\lambda-1}ds\big)dy \nonumber \\ 
& \leq \exp\big(\tfrac{m}{v\lambda}x^\lambda\big) \int_{\mathcal {E}}\rho(v,dv')\int_{\tfrac{m2^\lambda}{v \lambda}(x/2)^\lambda}^\infty {\mathbb V}\Big(\big(y\tfrac{v \lambda}{m 2^\lambda}\big)^{1/\lambda},v'\Big)e^{-y}dy. \nonumber
\end{align}
Pick ${\mathbb V}(x,v)={\mathbb V}(x)=\exp\big(\tfrac{m}{e_{\min} \lambda}x^\lambda\big)$ defined in \eqref{first def Lyapu}
and note that \eqref{control V alinfini} is satisfied for an apprioriate choice of $e_{\min}$ and $e_{\max}$ since $2^\lambda > \sup \mathcal E/\inf \mathcal E$.
With this choice, we further infer
\begin{align*}
{\mathcal P}_B{\mathbb V}(x,v) & \leq {\mathbb V}(x,v) \int_{\mathcal {E}}\rho(v,dv') \int_{\tfrac{m2^\lambda}{v \lambda}(x/2)^\lambda}^\infty \exp\big(-(1-2^{-\lambda})y\big)dy \\ 
& \leq {\mathbb V}(x,v) \frac{1}{1-2^{-\lambda}} \exp\big(- (1-2^{-\lambda})\tfrac{m}{v \lambda} r^\lambda\big)\rho_{\max}({\mathcal E})
\end{align*}
since $x \geq r$. Recall that
$$
\delta(\mathfrak{c})= \frac{1}{1-2^{-\lambda}} \exp\big(- (1-2^{-\lambda})\tfrac{m}{e_{\max} \lambda} r^\lambda\big)\rho_{\max}({\mathcal E})
.
$$
We obtain, for $x\geq r$ and $v \in {\mathcal E}$
\begin{equation} \label{first drift}
{\mathcal P}_B{\mathbb V}(x,v) \leq \delta(\mathfrak{c}) {\mathbb V}(x,v)
\end{equation}
and 
we have $\delta(\mathfrak{c}) < 1$ by Assumption \ref{contrainte constante}.
We next need to control ${\mathcal P}_B{\mathbb V}$ outside $x \in [r,\infty)$, that is on the small set ${\mathcal C}$.
For every $(x,v) \in {\mathcal C}$, we have
\begin{align}
 {\mathcal P}_B{\mathbb V}(x,v) 
 \leq &\int_{{\mathcal E}}\rho(v,dv')\Big(\int_{x/2}^{r/2} {\mathbb V}(y,v') \frac{B(2y)}{vy}dy \nonumber \\
 &+ \int_{r/2}^\infty {\mathbb V}(y,v') \frac{B(2y)}{v y}\exp\big(-\int_{r/2}^y \tfrac{B(2s)}{v s}ds\big)dy\Big)\nonumber \\
 \leq &\,\rho_{\max}({\mathcal E})
\Big(e_{\min}^{-1}\sup_{y \in [0,r]}{\mathbb V}(y) L+\delta(\mathfrak{c}) {\mathbb V}(r/2) \Big)=: Q <\infty \label{second drift} 
\end{align}
where we used \eqref{loc control} and the fact that $B \in {\mathcal F}^\lambda(\mathfrak{c})$. 
Combining together \eqref{first drift} and \eqref{second drift}, we conclude
\begin{equation} \label{drift condition}
{\mathcal P}_B{\mathbb V}(\boldsymbol{x}) \leq \delta(\mathfrak{c}) {\mathbb V}(\boldsymbol{x}) {\bf 1}_{\{\boldsymbol{x} \notin {\mathcal C}\}}+ Q{\bf 1}_{\{\boldsymbol{x}\in {\mathcal C}\}}. 
\end{equation}
\end{proof}
\begin{proof}[Completion of proof of Proposition \ref{prop transition}]
The minorisation condition \eqref{minorisation} together with the strong aperiodicity condition \eqref{aperiodicity} and the drift condition \eqref{drift condition} imply inequality \eqref{contraction} by Theorem 1.1 in Baxendale \cite{B}, with $R$ and $\gamma $ that explicitly depend on $\delta(\mathfrak{c})$, $\beta$, $\tilde \beta$, $V$ and $Q$. By construction, this bound is uniform in $B \in {\mathcal F}^\lambda(\mathfrak{c})$ and $\rho \in {\mathcal M}(\rho_{\min})$. More specifically, we have
$$\gamma < \min\{\max\{\delta({c}), \gamma_{{\mathbb V},B}\}, 1\}$$ 
therefore under Assumption \ref{contrainte constante} we have $\gamma <1$ and under Assumption \ref{the full tree assumption}, we obtain the improvement $\gamma < \frac{1}{2}$.
\end{proof}
\subsection{Further estimates on the invariant probability} \label{further invariant}
\begin{lemma} \label{borne sup nu}
For any $\mathfrak{c}$ such that Assumption \ref{contrainte constante} is satisfied and any compact interval ${\mathcal D} \subset (0,\infty)$, we have
$$\sup_{B \in {\mathcal F}^\lambda(\mathfrak{c})\, \cap\, {\mathcal H}^s({\mathcal D}, M)}\sup_{x \in {2^{-1}\mathcal D}}\nu_B(x)<\infty,$$
with $\nu_B(x) = \int_{{\mathcal E}}\nu_B(x,dv)$.
\end{lemma}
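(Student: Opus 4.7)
The plan is to appeal directly to the representation of the marginal density $\nu_B$ established in Proposition \ref{rep B via inv},
$$\nu_B(y) \;=\; \frac{B(2y)}{y}\,\E_{\nu_B}\Big[\tau_{u^-}^{-1}{\bf 1}_{\{\xi_{u^-}\leq 2y,\,\xi_u\geq y\}}\Big],$$
and to bound each of the three factors on the right-hand side uniformly in $y\in 2^{-1}{\mathcal D}$ and in $B \in {\mathcal F}^\lambda(\mathfrak{c}) \cap {\mathcal H}^s({\mathcal D},M)$. Note that Proposition \ref{prop transition} (whose assumptions are granted since Assumption \ref{contrainte constante} is in force) guarantees that the invariant measure $\nu_B$ exists and is of the form $\nu_B(d\boldsymbol{x}) = \nu_B(x,dv)\,dx$, so that the marginal density $\nu_B(y) = \int_{{\mathcal E}} \nu_B(y,dv)$ is well defined and the representation above applies.

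First, for $y \in 2^{-1}{\mathcal D}$ we have $2y \in {\mathcal D}$, hence $|B(2y)| \leq \|B\|_{L^\infty({\mathcal D})} \leq \|B\|_{{\mathcal H}^s({\mathcal D})} \leq M$ by definition of the H\"older ball ${\mathcal H}^s({\mathcal D},M)$. Second, since ${\mathcal D}$ is a compact subset of $(0,\infty)$, we have $\inf{\mathcal D} > 0$ and therefore $1/y \leq 2/\inf{\mathcal D}$ on $2^{-1}{\mathcal D}$. Third, the invariant measure $\nu_B$ is supported on ${\mathcal S} = [0,\infty) \times {\mathcal E}$, because the transition kernel ${\mathcal P}_B(\boldsymbol{x},\cdot)$ has its second marginal supported on ${\mathcal E}$ (by construction of the growth-rate kernel $\rho(v,dv')$ on ${\mathcal E}$, cf. Assumption \ref{basic assumption}), so that by the invariance equation $\nu_B {\mathcal P}_B = \nu_B$, the variable $\tau_{u^-}$ lies almost surely in ${\mathcal E} \subset [e_{\min}, e_{\max}]$ under $\P_{\nu_B}$. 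Consequently $\tau_{u^-}^{-1} \leq e_{\min}^{-1}$ almost surely, the indicator is bounded by $1$, and the expectation in the representation is dominated by $e_{\min}^{-1}$. Combining the three estimates yields
$$\nu_B(y) \;\leq\; \frac{2M}{e_{\min}\,\inf{\mathcal D}}$$
for every $y \in 2^{-1}{\mathcal D}$ and every $B$ in the prescribed class, which is the announced uniform bound.

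There is essentially no technical obstacle here: once the representation of Proposition \ref{rep B via inv} is available together with the compactness of ${\mathcal E}$, the estimate is a direct consequence of inserting crude $L^\infty$ bounds for each factor. The smoothness assumption $B \in {\mathcal H}^s({\mathcal D},M)$ enters only through the trivial inequality $\|B\|_{L^\infty({\mathcal D})} \leq M$; the regularity exponent $s$ plays no role whatsoever in this lemma and would only become relevant later when one needs, for instance, a modulus of continuity or a higher-order expansion of $\nu_B$. The only point worth being explicit about is the localisation of the support of $\nu_B$ to ${\mathcal S}$, but this is immediate from the stability of ${\mathcal S}$ under the transition ${\mathcal P}_B$.
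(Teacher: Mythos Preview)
Your proof is correct and is essentially the same argument as the paper's. The paper works directly from the integral form of the invariant measure equation
\[
\nu_B(x,dv) = \frac{B(2x)}{x} \int_{{\mathcal E}}\int_{0}^{2x}\nu_B(y, dv')\,dy\, \exp\Big(-\int_{y/2}^x\tfrac{B(2s)}{v' s}\,ds\Big)\frac{\rho(v',dv)}{v'},
\]
bounds the exponential by $1$, the inner double integral by $1$, and $1/v'$ by $1/e_{\min}$, obtaining $\nu_B(x)\leq B(2x)/(e_{\min}x)$; you instead invoke the expectation representation of Proposition~\ref{rep B via inv} (which is precisely the same integral rewritten probabilistically) and bound the expectation by $e_{\min}^{-1}$, arriving at the identical estimate $\nu_B(y)\leq 2M/(e_{\min}\inf{\mathcal D})$. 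A minor advantage of your route is that it never needs the upper dominating measure $\rho_{\max}$, which the paper's intermediate inequality $\nu_B(x,dv)\leq B(2x)(e_{\min}x)^{-1}\rho_{\max}(dv)$ silently uses.
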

\begin{proof}
Since $B  \in {\mathcal F}^\lambda(\mathfrak{c})$, $\nu_B$ is well-defined and satisfies
\begin{align*}
\nu_B(x,dv) & = \frac{B(2x)}{x} \int_{{\mathcal E}}\int_{0}^{2x}\nu_B(y, dv')dy \exp\big(-\int_{y/2}^x\tfrac{B(2s)}{v' s}ds\big)\frac{\rho(v',dv)}{v'}.
\end{align*}
Hence $\nu_B(x,dv) \leq B(2x)(e_{\min}x)^{-1}\rho_{\max}(dv)$
and we also have
$\nu_B(x) \leq B(2x)(e_{\min}x)^{-1}\rho_{\max}({\mathcal E})
$.
Since $B \in {\mathcal H}^s({\mathcal D},M)$ implies $\sup_{x\in 2^{-1}{\mathcal D}}B(2x) = \|B\|_{L^{\infty}({\mathcal D})} \leq M$, the conclusion follows.
\end{proof}
\begin{lemma} \label{minoration mes inv}
For any $\mathfrak{c}$ such that Assumption \ref{contrainte constante} is satisfied, there exists a constant $d(\mathfrak{c}) \geq 0$ such that for any compact interval ${\mathcal D}\subset (d(\mathfrak{c}),\infty)$, we have
$$\inf_{B \in {\mathcal F}^\lambda(\mathfrak{c})}\inf_{x \in {\mathcal D}}\varphi_B(x)^{-1}\nu_B(x) >0,$$
where $\varphi_B(x)$ is defined in \eqref{def varphi}.
\end{lemma}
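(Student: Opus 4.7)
The plan is to combine the explicit representation of $\nu_B$ coming from Proposition \ref{rep B via inv} with the uniform Lyapunov control furnished by Proposition \ref{prop transition}. The argument splits into a pointwise comparison between $\nu_B(x)$ and $\varphi_B(x)$ involving a tail mass of $\nu_B$, followed by a uniform lower bound on that tail mass when $x$ is large enough.

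First, integrating Proposition \ref{rep B via inv} over $v$, I would write
$$\nu_B(x)=\frac{B(2x)}{x}\int_{{\mathcal E}}\int_0^{2x}\frac{1}{v'}\nu_B(y,dv')\exp\Bigl(-\int_{y/2}^{x}\frac{B(2s)}{v's}ds\Bigr)dy.$$
Since ${\mathcal E}\subset[e_{\min},e_{\max}]$, the bounds $1/v'\geq 1/e_{\max}$ and
$$\exp\Bigl(-\int_{y/2}^{x}\frac{B(2s)}{v's}ds\Bigr)\geq \exp\Bigl(-\int_0^{x}\frac{B(2s)}{e_{\min}s}ds\Bigr)$$
hold pointwise, and recognising $\varphi_B(x)$ as defined in \eqref{def varphi} on the right-hand side yields the central inequality
$$\nu_B(x)\geq \varphi_B(x)\int_0^{2x}\nu_B(y)dy.$$
Hence controlling $\varphi_B(x)^{-1}\nu_B(x)$ from below reduces to controlling the mass $\int_0^{2x}\nu_B(y)dy$ from below, uniformly in $B\in{\mathcal F}^\lambda(\mathfrak{c})$.

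Second, I would use the drift condition \eqref{drift condition} together with invariance $\int {\mathcal P}_B{\mathbb V}\,d\nu_B=\int {\mathbb V}\,d\nu_B$ (made rigorous by the $\nu_B$-integrability of ${\mathbb V}$ stated at the end of Proposition \ref{prop transition}). Splitting according to whether $\boldsymbol{x}\in{\mathcal C}$ and bounding ${\mathbb V}$ by ${\mathbb V}(r)$ on the small set ${\mathcal C}$ gives
$$\int_{{\mathcal S}}{\mathbb V}\,d\nu_B\leq \frac{Q+\delta(\mathfrak{c}){\mathbb V}(r)}{1-\delta(\mathfrak{c})}=:\Xi,$$
a bound independent of $B$. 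By Markov's inequality applied to ${\mathbb V}$, for any $R>0$,
$$\int_R^{\infty}\nu_B(y)dy\leq \frac{\Xi}{{\mathbb V}(R)}.$$
Since ${\mathbb V}(R)\to\infty$ as $R\to\infty$, one selects $R=R(\mathfrak{c})$ large enough so that $\Xi/{\mathbb V}(R)\leq 1/2$, and therefore $\int_0^R\nu_B(y)dy\geq 1/2$ uniformly in $B$. Setting $d(\mathfrak{c}):=R/2$, any compact interval ${\mathcal D}\subset(d(\mathfrak{c}),\infty)$ and any $x\in{\mathcal D}$ satisfy $2x>R$, so that $\int_0^{2x}\nu_B(y)dy\geq 1/2$, which combined with the first step yields $\varphi_B(x)^{-1}\nu_B(x)\geq 1/2$ uniformly.

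The only subtle point I expect is the a priori integrability of ${\mathbb V}$ against $\nu_B$, needed to carry out the integration-by-invariance step; this is precisely where the last assertion of Proposition \ref{prop transition} is invoked (and could alternatively be checked by iterating the drift inequality and passing to a Cesaro limit). Once this is in place, both steps are straightforward and the constant $d(\mathfrak{c})$ is explicit in terms of $\mathfrak{c}$ through $\Xi$, ${\mathbb V}$ and the choice of $R$.
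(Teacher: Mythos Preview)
Your proposal is correct and follows essentially the same approach as the paper: derive the pointwise lower bound $\nu_B(x)\geq \varphi_B(x)\int_0^{2x}\nu_B(y)\,dy$ from the invariant-measure equation, then use the Lyapunov control to bound the tail $\int_{2x}^\infty \nu_B(y)\,dy$ uniformly in $B$ and define $d(\mathfrak{c})$ accordingly. The only cosmetic differences are the order of the two steps and the way the uniform moment bound $\sup_B\int\mathbb{V}\,d\nu_B<\infty$ is obtained: the paper invokes \eqref{contraction} at $k=1$ together with \eqref{drift condition}, whereas you integrate the drift inequality against $\nu_B$ directly (your displayed constant $\Xi$ is slightly overcounted---the clean bound is $Q/(1-\delta(\mathfrak{c}))$---but this is immaterial).
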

\begin{proof} Let $g:[0,\infty)\rightarrow [0,\infty)$ satisfy $g(x) \leq {\mathbb V}(x)=\exp(\tfrac{m}{e_{\min} \lambda}x^{\lambda})$ for every $x \in [0,\infty)$. By Proposition \ref{prop transition}, we have
\begin{equation} \label{integ unif mes inv}
\sup_{B\in \mathcal{F}_\lambda^\nu(\mathfrak{c})}\int_{[0,\infty)}g(x)\nu_B(x)dx<\infty,
\end{equation}
as a consequence of \eqref{contraction} with $n=1$ together with the property that $\sup_{B\in \mathcal{F}_\lambda^\nu(\mathfrak{c})}{\mathcal P}_B{\mathbb V}(\boldsymbol{x})<\infty$ for every $\boldsymbol{x} \in \mathcal{S}$, as follows from \eqref{drift condition} in the proof of Proposition \ref{prop transition}. Next, for every $x\in (0,\infty)$, we have
\begin{align*}
\int_{2x}^\infty \nu_B(y)dy \;
&\leq \exp(-\tfrac{m}{e_{\min} \lambda}(2x)^{\lambda})\int_{[0,\infty)}{\mathbb V}(y)\nu_B(y)dy
\end{align*}
and this bound is uniform in $B\in \mathcal{F}^\lambda(\mathfrak{c})$ by \eqref{integ unif mes inv}. Therefore, for every $x\in (0,\infty)$, we have
\begin{equation} \label{ineg qui tue}
\sup_{B\in \mathcal{F}_\lambda^\nu(\mathfrak{c})} \int_{2x}^\infty \nu_B(y)dy \leq c(\mathfrak{c}) \exp(-\tfrac{m}{e_{\min} \lambda}(2x)^{\lambda})
\end{equation}
for some $c(\mathfrak{c})>0$. Let 
\begin{equation} \label{def d}
d(\mathfrak{c})>\big(\frac{e_{\min} \lambda 2^{-\lambda}}{m} \log c(\mathfrak{c})\big)^{1/\lambda} {\bf 1}_{\{c(\mathfrak{c}) \geq 1\}}.
\end{equation}
By definition of $\nu_B$, for every $x\in (0,\infty)$, we now have
\begin{align*}
\nu_B(x,dv) & = \frac{B(2x)}{x}\int_{\mathcal{E}}\int_{0}^{2x} \nu_B(y,dv')\exp\big(-\int_{y/2}^x\tfrac{B(2s)}{v' s}ds\big)dy\frac{\rho(v', dv)}{v'} \\
& \geq  \frac{B(2x)}{e_{\max} x}\exp\big(-\int_{0}^x\tfrac{B(2s)}{e_{\min} s}ds\big)\int_0^{2x} \nu_B(y)dy\, \rho_{\min}(dv)\\
& \geq  \frac{B(2x)}{e_{\max} x}\exp\big(-\int_{0}^x\tfrac{B(2s)}{e_{\min} s}ds\big)\Big(1-c(\mathfrak{c}) \exp(-\tfrac{m}{e_{\min} \lambda}(2x)^{\lambda})\Big)\rho_{\min}(dv) 
\end{align*}
where we used \eqref{ineg qui tue} for the last inequality. By \eqref{def d}, for $x \geq d(\mathfrak{c})$ we have 
$$\big(1-c(\mathfrak{c}) \exp(-\tfrac{m}{e_{\min} \lambda}(2x)^{\lambda})\big)>0$$ and the conclusion readily follows by integration.
\end{proof}
\subsection{Covariance inequalities} \label{covariance inequalities}
If $u, w \in {\mathcal U}$, we define $a(u,w)$ as the node of the most recent common ancestor between $u$ and $w$. Introduce the distance 
$$\mathbb{D}(u,w) = |u|+|w|-2|a(u,w)|.$$
\begin{prop} \label{moment bound} 
Work under Assumption \ref{contrainte constante}.  
Let $\mu$ be a probability distribution on ${\mathcal S}$ such that $\int_{\mathcal {S}}{\mathbb V}(\boldsymbol{x})^2\mu(d\boldsymbol{x})<\infty$.
Let $G: {\mathcal S}\rightarrow \R$ and $H:[0,\infty)\rightarrow \R$ be two bounded functions.
Define
$$Z(\xi_{u^-},\tau_{u^-}, \xi_u) = G(\xi_{u^-},\tau_{u^-})H(\xi_u) - \E_{\nu_B}[G(\xi_{u^-},\tau_{u^-})H(\xi_u)].$$
For any $u,w\in {\mathcal U}$ with $|u|,|w| \geq 1$, we have
\begin{align}
\big| \E_{\mu}\big[Z(\xi_{u^-},\tau_{u^-}, \xi_u) Z(\xi_{w^-},\tau_{w^-}, \xi_w) \big]  \big|
\lesssim \gamma^{\,\mathbb{D}(u,w)} \label{covariance GH}
\end{align}
uniformly in $B \in {\mathcal F}^\lambda(\mathfrak{c})$, with $\gamma$ and $\nu_B$ defined in \eqref{contraction} of Proposition \ref{prop transition}. 
\end{prop}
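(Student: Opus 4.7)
The strategy is to exploit the branching Markov property of $(\xi_u,\tau_u)_{u\in{\mathcal U}}$: conditional on $(\xi_a,\tau_a)$ at any node $a$, the two subtrees rooted at the children of $a$ evolve as independent Markov chains with transition ${\mathcal P}_B$. Introduce the bounded auxiliary function
$$\Psi(\boldsymbol{x})=G(\boldsymbol{x})\int H(y){\mathcal P}_B(\boldsymbol{x},dy,dv'),$$
so that $\E[G(\xi_{v^-},\tau_{v^-})H(\xi_v)\mid(\xi_{v^-},\tau_{v^-})]=\Psi(\xi_{v^-},\tau_{v^-})$ for every $v$ with $|v|\geq 1$, while $\nu_B(\Psi)$ coincides with the centering constant $\E_{\nu_B}[G(\xi_{u^-},\tau_{u^-})H(\xi_u)]$ appearing in the definition of $Z$.

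Let $a=a(u,w)$ and split into two cases. In the \emph{disjoint-subtree case} $a\notin\{u,w\}$, the two paths from $a$ to $u$ and to $w$ start at the two distinct children of $a$, so the branching Markov property renders $Z(\xi_{u^-},\tau_{u^-},\xi_u)$ and $Z(\xi_{w^-},\tau_{w^-},\xi_w)$ conditionally independent given $(\xi_a,\tau_a)$. Iterating the Markov property along a single branch gives, for $v\in\{u,w\}$,
$$\E[Z(\xi_{v^-},\tau_{v^-},\xi_v)\mid(\xi_a,\tau_a)]={\mathcal P}_B^{|v|-|a|-1}\Psi(\xi_a,\tau_a)-\nu_B(\Psi),$$
and the contraction estimate \eqref{contraction} applied to the bounded function $\Psi$ (for which $|\Psi|\leq\|\Psi\|_\infty{\mathbb V}$ since ${\mathbb V}\geq 1$) yields
$$\bigabs{\E[Z(\xi_{v^-},\tau_{v^-},\xi_v)\mid(\xi_a,\tau_a)]}\leq R\|\Psi\|_\infty{\mathbb V}(\xi_a,\tau_a)\gamma^{|v|-|a|-1}.$$
Multiplying the two conditional factors and integrating under $\mu$ leads to
$$\bigabs{\E_\mu\bigcro{Z(\xi_{u^-},\tau_{u^-},\xi_u)Z(\xi_{w^-},\tau_{w^-},\xi_w)}}\lesssim \gamma^{\mathbb{D}(u,w)-2}\,\E_\mu\bigcro{{\mathbb V}(\xi_a,\tau_a)^2}.$$
In the \emph{nested case} $a\in\{u,w\}$, say $a=w$, conditioning on the $\sigma$-algebra of the ancestry of $w$ makes $Z(\xi_{w^-},\tau_{w^-},\xi_w)$ measurable and uniformly bounded by $2\|G\|_\infty\|H\|_\infty$, while the same contraction argument gives $|\E[Z_u\mid\cdot]|\leq R\|\Psi\|_\infty{\mathbb V}(\xi_w,\tau_w)\gamma^{|u|-|w|-1}$, whence the bound $\lesssim\gamma^{\mathbb{D}(u,w)-1}\E_\mu[{\mathbb V}(\xi_w,\tau_w)]$. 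The trivial case $u=w$ is handled by the crude estimate $|Z_u|^2\leq(2\|\Psi\|_\infty)^2$. Both cases reduce to $\lesssim\gamma^{\mathbb{D}(u,w)}$ once the finite factors $\gamma^{-1}$ and $\gamma^{-2}$ are absorbed into the implicit constant, which depends only on $\mathfrak{c}$, $\|G\|_\infty$, $\|H\|_\infty$ through the uniform constants $R,\gamma$ provided by Proposition~\ref{prop transition}.

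It remains to control $\E_\mu[{\mathbb V}(\xi_a,\tau_a)^p]$ uniformly in $a\in{\mathcal U}$ for $p\in\{1,2\}$. For $p=1$ this is immediate by iterating the drift inequality \eqref{drift condition} along the branch from the root to $a$, which gives $\E_\mu[{\mathbb V}(\xi_a,\tau_a)]\leq\delta(\mathfrak{c})^{|a|}\mu({\mathbb V})+Q/(1-\delta(\mathfrak{c}))$, uniformly bounded. The main obstacle is the case $p=2$, required by the disjoint-subtree bound: rerunning the integration-by-parts computation in the proof of Proposition~\ref{prop transition} with ${\mathbb V}^2(x)=\exp\bigpar{\tfrac{2m}{e_{\min}\lambda}x^\lambda}$ in place of ${\mathbb V}$ produces an analogous drift ${\mathcal P}_B{\mathbb V}^2\leq\delta_2(\mathfrak{c}){\mathbb V}^2+Q_2$ with $\delta_2<1$, provided the admissible constants $\mathfrak{c}$ satisfy the tail integrability condition $\lim_{y\to\infty}{\mathbb V}^2(y)\exp\bigpar{-2^\lambda\tfrac{m}{v\lambda}y^\lambda}=0$ for every $v\in{\mathcal E}$, essentially a factor-of-two strengthening of Assumption~\ref{contrainte constante}. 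Combined with the standing hypothesis $\int_{\mathcal S}{\mathbb V}^2\,d\mu<\infty$, this delivers $\sup_{a\in{\mathcal U}}\E_\mu[{\mathbb V}^2(\xi_a,\tau_a)]<\infty$, uniformly in $B\in{\mathcal F}^\lambda(\mathfrak{c})$, and completes the proof.
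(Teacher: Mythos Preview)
Your argument is essentially the paper's own: condition at the most recent common ancestor, apply the contraction \eqref{contraction} to the bounded function $\Psi=G\,{\mathcal P}_BH$ along each branch to extract the factor $\gamma^{\mathbb{D}(u,w)}$, and then control $\E_\mu[{\mathbb V}(\xi_a)^2]$ by checking that ${\mathbb V}^2$ also satisfies a drift inequality of the form \eqref{drift condition}. The paper conditions at $a(u^-,w^-)$ rather than $a(u,w)$ and does not split into the disjoint/nested cases explicitly, but these are cosmetic differences; note also that the paper simply asserts the drift for ${\mathbb V}^2$ under Assumption~\ref{contrainte constante} as stated, without the extra integrability caveat you add.
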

\begin{proof}
In view of \eqref{covariance GH}, with no loss of generality, we may (and will) assume that for every $(x,v)\in {\mathcal S}$
\begin{equation} \label{controle lyapou}
|G(x,v)|\leq {\mathbb V}(x)\;\;\text{and}\;\;|H(x)| \leq {\mathbb V}(x)
\end{equation}
Applying repeatedly the Markov property along the branch that joins the nodes $a^-(u,w):=a(u^-,w^-)$ and $w$, we have
\begin{align*}
&\E_{\mu}\big[G(\xi_{u^-}, \tau_{u^-})H(\xi_u)|\,\xi_{a^-(u,w)}, \tau_{a^-(u,w)}\big]\\
 = &\;{\mathcal P}_B^{|u^-|-|a^-(u,w)|}(G\,{\mathcal P}_B H)\big(\xi_{a^-(u,w)},\tau_{a^-(u,w)}\big) \\
 =&\; {\mathcal P}_B^{|u|-|a(u,w)|}(G\,{\mathcal P}_B H)\big(\xi_{a^-(u,w)},\tau_{a^-(u,w)}\big)
\end{align*}
with an analogous formula for $G(\xi_{w^-}, \tau_{w^-})H(\xi_w)$.
Conditioning with respect to $\xi_{a^-(u,w)}, \tau_{a^-(u,w)}$, it follows that
\begin{align*}
&\E_{\mu}\big[Z(\xi_{u^-},\tau_{u^-}, \xi_u) Z(\xi_{w^-},\tau_{w^-}, \xi_w) \big] \\
=\;&  \E_{\mu}\Big[ \Big({\mathcal P}_B^{|u|-|a(u,w)|}(G\,{\mathcal P}_B H)(\xi_{a^-(u,w)}, \tau_{a^-(u,w)})-\E_{\nu_B}[G\,{\mathcal P}_B H(\xi_{\emptyset}, \tau_{\emptyset})]\Big)\\
&\;\hspace{1cm}\Big({\mathcal P}_B^{|w|-|a(u,w)|}(G\,{\mathcal P}_B H)(\xi_{a^-(u,w)}, \tau_{a^-(u,w)})-\E_{\nu_B}[G\,{\mathcal P}_B H(\xi_{\emptyset}, \tau_{\emptyset})]\Big)\Big].
\end{align*}
Applying Proposition \ref{prop transition} thanks to Assumption \ref{contrainte constante} and \eqref{controle lyapou}, we further infer
\begin{align*}
 \E_{\mu}\big[Z(\xi_{u^-},\tau_{u^-}, \xi_u) Z(\xi_{w^-},\tau_{w^-}, \xi_w)\big]  
\leq & \,R^2\sup_x H(x)^2\E_{\mu}\big[{\mathbb V}(\xi_{a^-(u,w)})^2\big]\gamma^{\,\mathbb{D}(u,w)}\\
\lesssim \; & \int_{\mathcal{S}}{\mathcal P}_B^{|a^-(u,w)|}\big({\mathbb V}^2\big)(\boldsymbol{x})\mu(d\boldsymbol{x})\, \gamma^{\,\mathbb{D}(u,w)}.
\end{align*}
We leave to the reader the straightfoward task to check that the choice of ${\mathbb V}$ in \eqref{first def Lyapu} implies that  ${\mathbb V}^2$ satisfies \eqref{drift condition}. It follows that Proposition \ref{prop transition} applies, replacing ${\mathbb V}$ by ${\mathbb V}^2$ in \eqref{contraction}. In particular,
\begin{equation} \label{majoration V deux}
\sup_{B \in {\mathcal F}^\lambda(\mathfrak{c})}{\mathcal P}_B^{|a^-(u,w)|}\big({\mathbb V}^2\big)(\boldsymbol{x}) \lesssim 1+{\mathbb V}(\boldsymbol{x})^2.
\end{equation}
Since ${\mathbb V}^2$ is $\mu$-integrable by assumption, inequality \eqref{covariance GH} follows. 
\end{proof} 
\begin{prop} \label{moment bound bis}
Work under Assumption \ref{contrainte constante}. 
Let $\mu$ be a probability on ${\mathcal S}$ such that $\int_{\mathcal{S}}{\mathbb V}(\boldsymbol{x})^2\mu(d\boldsymbol{x})<\infty$.
Let $x_0$ be in the interior of $\frac{1}{2}{\mathcal D}$. 
Let $H:\R\rightarrow \R$ be bounded with compact support. Set
$$\widetilde H\big(\tfrac{\xi_u-x_0}{h}\big) = H\big(\tfrac{\xi_u-x_0}{h}\big)-\E_{\nu_B}\big[H\big(\tfrac{\xi_\emptyset-x_0}{h}\big)\big].$$
For any $u,w\in {\mathcal U}$ with $|u|, |w| \geq 1$, we have
\begin{align} 
\big| \E_{\mu}\big[\widetilde H\big(\tfrac{\xi_u-x_0}{h}\big)\widetilde H\big(\tfrac{\xi_w-x_0}{h}\big)\big]  \big| 
\lesssim \; &  \gamma^{\,\mathbb{D}(u,w)} \bigwedge h 
\gamma^{\,\mathbb{D}(u,a(u,w)) \vee \,\mathbb{D}(w,a(u,w))} 
\label{first covariance}
\end{align}
uniformly in $B \in {\mathcal F}^\lambda(\mathfrak{c}) \cap {\mathcal H}^s({\mathcal D}, M)$ for sufficiently small $h>0$.
\end{prop}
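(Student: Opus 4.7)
The plan is to establish two separate upper bounds on the covariance and take their minimum. The first, $\gamma^{\mathbb{D}(u,w)}$, is a direct consequence of Proposition~\ref{moment bound} applied with $G \equiv 1$ and with the bounded function $y \leadsto H((y-x_0)/h)$ in place of $H$; the latter is uniformly bounded by $\|H\|_\infty$ in $h$, so the hypothesis of that proposition is met uniformly in $h$. This controls the regime where $u$ and $w$ are genealogically far apart. The second, $h\,\gamma^{\mathbb{D}(u,a(u,w))\vee\mathbb{D}(w,a(u,w))}$, is the new ingredient and exploits the fact that $y \leadsto H((y-x_0)/h)$ is localised at scale $h$ around $x_0$.

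The core estimate for the second bound is that for every $k\ge 1$ and every $\boldsymbol{x} \in {\mathcal S}$,
$$
\Bigl|{\mathcal P}_B^k\bigl(\widetilde H((\cdot-x_0)/h)\bigr)(\boldsymbol{x})\Bigr| \lesssim h\, \gamma^{k-1}\, \mathbb{V}(\boldsymbol{x}),
$$
uniformly in $B \in {\mathcal F}^\lambda(\mathfrak{c}) \cap {\mathcal H}^s({\mathcal D},M)$. This is established in two steps. First, from the explicit density \eqref{density explicit} and the fact that the support of $y \leadsto H((y-x_0)/h)$ lies in a compact subset of $2^{-1}{\mathcal D}$ for $h$ small enough (since $x_0$ is interior to $2^{-1}{\mathcal D}$, on which $B \leq M$), the integrand of ${\mathcal P}_B H((\cdot-x_0)/h)(\boldsymbol{x})$ is dominated by a constant times $|H((y-x_0)/h)|$, giving $\sup_{\boldsymbol{x}}|{\mathcal P}_B H((\cdot-x_0)/h)(\boldsymbol{x})| \lesssim h$; iteration via the sup-norm contractivity of ${\mathcal P}_B$ extends this to all $k \geq 1$. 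Consequently $|\nu_B(H((\cdot-x_0)/h))| \lesssim h$, so ${\mathcal P}_B\widetilde H((\cdot-x_0)/h)$ is a $\nu_B$-centred function of sup-norm $\lesssim h$, and Proposition~\ref{prop transition} applied to it delivers the displayed decay.

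Set $j = |a(u,w)|$, $k_u := |u|-j$, $k_w := |w|-j$, and assume WLOG $k_u \geq k_w$. For $k_u, k_w \geq 1$, conditioning on the $\sigma$-field generated by the marks at generation $j+1$ on the paths to $u$ and $w$ and invoking the branching Markov property factorises the conditional expectation into a product of two iterated kernels applied to $\widetilde H((\cdot-x_0)/h)$, over $k_u-1$ and $k_w-1$ steps respectively; applying the decay estimate to the longer branch and either the same estimate (if $k_w \geq 2$) or the trivial bound $|\widetilde H| \leq 2\|H\|_\infty$ (if $k_w = 1$) to the shorter branch, together with the Lyapunov moment bound \eqref{majoration V deux}, yields $\lesssim h\,\gamma^{k_u}$. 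When $k_w = 0$ (so $w = a(u,w)$ is an ancestor of $u$), we condition directly on $(\xi_w,\tau_w)$ and apply the decay estimate to ${\mathcal P}_B^{k_u}\widetilde H((\cdot-x_0)/h)$.

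The main obstacle is the pair of degenerate subcases where the Markov conditioning affords no decay at all: the siblings case $k_u = k_w = 1$ (where the two factors are evaluated at the same random variable because the two offspring of $a(u,w)$ share their birth size, so the product reduces to $\widetilde H((\xi - x_0)/h)^2$) and the case $u = w$. Both are handled by a direct density argument: since $|u|,|w| \geq 1$, the law of $\xi_v$ is obtained by at least one application of the bounded kernel \eqref{density explicit} to $\mu$, hence has a density bounded uniformly in $B \in {\mathcal F}^\lambda(\mathfrak{c}) \cap {\mathcal H}^s({\mathcal D},M)$ in a neighbourhood of $x_0$; this immediately yields $\E_\mu[\widetilde H((\xi_v-x_0)/h)^2] \lesssim h$, matching the target bound $h\,\gamma^0 = h$. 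Absorbing the factors $\gamma^{-1}, \gamma^{-2}$ that arise from these boundary cases into the universal constant completes the proof.
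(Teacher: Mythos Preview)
Your proof is correct and rests on the same two ingredients as the paper: the $\gamma^{\mathbb D(u,w)}$ bound from Proposition~\ref{moment bound} with $G\equiv 1$, and the key observation that a single application of $\mathcal P_B$ to $y\mapsto H((y-x_0)/h)$ yields a function of sup-norm $\lesssim h$, thanks to the explicit density \eqref{density explicit} and the control $B\le M$ on $\mathcal D$.

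The organisation differs. The paper conditions on $(\xi_{a(u,w)},\tau_{a(u,w)})$ and bounds the two resulting factors asymmetrically: the longer one by $R\,\mathbb V(\xi_{a(u,w)})\gamma^{\max(k_u,k_w)}$ via Proposition~\ref{prop transition}, and the shorter one by a constant times $h$ via the density argument (with Lemma~\ref{borne sup nu} covering the case $\min(k_u,k_w)=0$); no further case distinction is needed. You instead package both effects into the single estimate $|\mathcal P_B^k\widetilde H|\lesssim h\gamma^{k-1}\mathbb V$ and condition one generation deeper, at level $|a(u,w)|+1$, which gives you a genuine factorisation into conditionally independent pieces (in this model the two children of a node share their birth size, so independence given the parent alone is not literally available). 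The price is the explicit treatment of the boundary cases $k_u=k_w=1$ and $u=w$, which you dispatch correctly by bounding the marginal density of $\xi_v$ near $x_0$. Both routes deliver the same bound with the same uniformity; yours is a bit longer but more careful about the branching structure, while the paper's is shorter and keeps the $h$-bound and the ergodic decay separate rather than fusing them.
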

\begin{proof} The first part of the estimate in the right-hand side of \eqref{first covariance} is obtained by letting $G=1$ in \eqref{covariance GH}.
We turn to the second part. 
Repeating the same argument as for \eqref{covariance GH} and conditioning with respect to $\xi_{a(u,w)}$, we obtain
\begin{align}
&
\E_{\mu}\big[\widetilde H\big(\tfrac{\xi_u-x_0}{h}\big)\widetilde H\big(\tfrac{\xi_w-x_0}{h}\big)\big]\nonumber\\
= \;& \E_{\mu}\Big[\big({\mathcal P}_B^{|u|-|a(u,w)|}H\big(\tfrac{\xi_{a(u,w)}-x_0}{h}\big)
-\E_{\nu_B}\big[H\big(\tfrac{\xi_\emptyset-x_0}{h}\big)\big]\big) \nonumber\\
& \hspace{3cm}
\big({\mathcal P}_B^{|w|-|a(u,w)|}H\big(\tfrac{\xi_{a(u,w)}-x_0}{h}\big)
-\E_{\nu_B}\big[H\big(\tfrac{\xi_\emptyset-x_0}{h}\big)\big]\big) \label{decomp markov}
\Big].
\end{align}
Assume with no loss of generality that $|u| \leq |w|$ (otherwise, the same subsequent arguments apply exchanging the roles of $u$ and $w$).
On the one hand, applying \eqref{contraction} of Proposition \ref{prop transition}, 
we have
\begin{equation} \label{premiere maj h}
\big|{\mathcal P}_B^{|w|-|a(u,w)|}H\big(\tfrac{\xi_{a(u,w)}-x_0}{h}\big)-\E_{\nu_B}\big[H\big(\tfrac{\xi_\emptyset-x_0}{h}\big)\big]
\big| \leq R{\mathbb V}\big(\xi_{a(u,w)}\big)\gamma^{|w|-|a(u,w)|}.
\end{equation}
On the other hand, identifying $H$ as a function defined on $\mathcal{S}$, for every $(x,v)\in {\mathcal S}$, we have
\begin{align}
\big|{\mathcal P}_BH\big(\tfrac{x-x_0}{h}\big)\big| & = \Big|\int_{x/2}^\infty H\big(h^{-1}(y-x_0)\big) \frac{B(2y)}{vy}\exp\big(-\int_{x/2}^y \tfrac{B(2s)}{vs}ds\big) dy\Big| \nonumber  \\
& \leq \int_{[0,\infty)} \big|H\big(h^{-1}(y-x_0)\big)\big| \frac{B(2y)}{e_{\min} y} dy   \nonumber\\
& \leq e_{\min}^{-1}\sup_{y\in\{x_0+h\,\mathrm{supp}(H)\}}\frac{B(2y)}{y} h \int_{[0,\infty)}|H(x)|dx  \lesssim h. \label{obtention h}
\end{align}
Indeed, since $x_0$ is in the interior of $\frac{1}{2}{\mathcal D}$ we have $\{x_0+h\,\mathrm{supp}(H)\} \subset \tfrac{1}{2}{\mathcal D}$  for small enough $h$ hence $\sup_{y\in\{x_0+h\,\mathrm{supp}(H)\}}B(2y) \leq M$. Now, since ${\mathcal P}_B$ is a positive operator and ${\mathcal P}_B {\bf 1} = {\bf 1}$, 
we derive
\begin{equation} \label{avant esperance h}
\big|{\mathcal P}_B^{|u|-|a(u,w)|}H\big(\tfrac{\xi_{a(u,w)}-x_0}{h}\big)\big| \lesssim h
\end{equation}
as soon as $|u|-|a(u,w)| \geq 1$, uniformly in $B \in {\mathcal F}^\lambda(\mathfrak{c}) \cap {\mathcal H}^s({\mathcal D}, M)$.  If $|u|=|a(u,w)|$, since
$\int_{\mathcal{E}}\nu_B(dx,dv) = \nu_B(x)dx$, we obtain in the same way
\begin{align}
\big|\E_{\nu_B}\big[H\big(\tfrac{\xi_{a(u,w)}-x_0}{h}\big)\big] \big|& \leq \int_{[0,\infty)}\big|H\big(\tfrac{x-x_0}{h}\big)\big|\nu_B(x)dx \lesssim h
\label{esperance h}
\end{align}
using Lemma \ref{borne sup nu}. We have $\big|\E_{\nu_B}\big[H\big(\tfrac{\xi_u-x_0}{h}\big)\big]\big| \lesssim h$ likewise.
Putting together \eqref{avant esperance h} and \eqref{esperance h}
we derive
$$\big|{\mathcal P}_B^{|u|-|a(u,w)|}H\big(\tfrac{\xi_{a(u,w)}-x_0}{h}\big)
-\E_{\nu_B}\big[H\big(\tfrac{\xi_u-x_0}{h}\big)\big]\big| \lesssim h$$
and this estimate is uniform in $B \in {\mathcal F}^\lambda(\mathfrak{c}) \cap {\mathcal H}^s({\mathcal D}, M)$. In view of  \eqref{decomp markov} and \eqref{premiere maj h}, we obtain
$$\E_{\mu}\big[\widetilde H\big(\tfrac{\xi_u-x_0}{h}\big)\widetilde H\big(\tfrac{\xi_w-x_0}{h}\big)\big]
\lesssim \;  h\gamma^{|w|-|a(u,w)|}\E_{\mu}\big[{\mathbb V}\big(\xi_{a(u,w)}\big)\big].
$$
We conclude in the same way as in Proposition \ref{moment bound}.
\end{proof}
\subsection{Rate of convergence for the empirical measure} \label{rate of convergence for the empirical measure}
For every $y\in (0,\infty)$ and $u\in {\mathcal U}$ with $|u|\geq 1$, define
\begin{equation} \label{def D}
D(y) = \E_{\nu_B}\big[\tfrac{1}{\tau_{u^-}}{\bf 1}_{\{\xi_{u^-}\leq 2y,\,\xi_u \geq y\}}\big],
\end{equation}
\begin{equation} \label{def D_n}
D_n(y)= n^{-1}\sum_{u \in {\mathcal U}_n} \tfrac{1}{\tau_{u^-}}{\bf 1}_{\displaystyle \{\xi_{u^-}\leq 2y, \xi_u \geq y\}},
\end{equation}
and
$$
D_n(y)_{\varpi} = D_n(y) \bigvee \varpi.
$$
\begin{prop} \label{moment lemma}
Work under Assumption \ref{contrainte constante} in the sparse tree case and Assumption \ref{the full tree assumption} in the full tree case. Let $\mu$ be a probability on ${\mathcal S}$ such that $\int_{\mathcal{S}}{\mathbb V}(\boldsymbol{x})^2\mu(d\boldsymbol{x})<\infty$.
If $1 \geq \varpi = \varpi_n\rightarrow 0$ as $n \rightarrow \infty$, we have
\begin{equation} \label{convergence D_n}
\sup_{y\in {\mathcal D}}\E_{\mu}\big[\big(D_n(y)_{\varpi_n}-D(y)\big)^2\big] \lesssim n^{-1}
\end{equation}
uniformy in $B \in {\mathcal F}^\lambda(\mathfrak{c}) \cap {\mathcal H}^s(2^{-1}{\mathcal D}, M)$ and $\rho \in {\mathcal M}(\rho_{\min}, \rho_{\max})$.
\end{prop}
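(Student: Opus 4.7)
The plan is to reduce the squared error to the variance of an unthresholded empirical mean and then apply Proposition~\ref{moment bound} together with a counting argument on the tree. Introduce $Z_u = \tfrac{1}{\tau_{u^-}}\mathbf{1}_{\{\xi_{u^-}\leq 2y,\,\xi_u\geq y\}}$ and $\widetilde D_n(y) = n^{-1}\sum_{u\in{\mathcal U}_n} Z_u$, so that $D_n(y)_{\varpi_n} = \max(\widetilde D_n(y),\varpi_n)$. A preliminary step is to show that $\inf_{y\in{\mathcal D}} D(y) \geq c > 0$ uniformly in the parameter class: by Proposition~\ref{rep B via inv} we have $D(y) = y\nu_B(y)/B(2y)$, and combining the lower bound on $\nu_B$ from Lemma~\ref{minoration mes inv} with the sup-bound $B(2y)\leq M$ on $2^{-1}{\mathcal D}$ inherited from $B \in {\mathcal H}^s({\mathcal D},M)$ gives the required lower bound. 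Since $\varpi_n \to 0$, eventually $\varpi_n \leq c$ and the $1$-Lipschitz property of the max yields
\[
\bigabs{D_n(y)_{\varpi_n} - D(y)} = \bigabs{\max(\widetilde D_n(y),\varpi_n) - \max(D(y),\varpi_n)} \leq \bigabs{\widetilde D_n(y) - D(y)},
\]
reducing the problem to estimating $\E_\mu\bigcro{(\widetilde D_n(y) - D(y))^2}$ uniformly in $y \in {\mathcal D}$.

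Since $\E_{\nu_B}[Z_u] = D(y)$ by definition~\eqref{def D}, expanding the square gives
\[
\E_\mu\bigcro{(\widetilde D_n(y)-D(y))^2} = n^{-2}\sum_{u,w\in{\mathcal U}_n} \E_\mu\bigcro{(Z_u-D(y))(Z_w-D(y))}.
\]
Proposition~\ref{moment bound} then applies with $G(x,v)=\tfrac{1}{v}\mathbf{1}_{\{x\leq 2y\}}$ (bounded by $e_{\min}^{-1}$) and $H(x')=\mathbf{1}_{\{x'\geq y\}}$, and produces $\bigabs{\E_\mu\bigcro{(Z_u-D(y))(Z_w-D(y))}} \lesssim \gamma^{\mathbb{D}(u,w)}$ with constants uniform in $y$, $B$ and $\rho$, since $G,H$ are uniformly bounded independently of these parameters.

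The remaining task is the counting bound $\sum_{u,w\in{\mathcal U}_n}\gamma^{\mathbb{D}(u,w)} \lesssim n$. In the sparse tree case, ${\mathcal U}_n$ is a path and $\mathbb{D}(u,w) = \bigabs{|u|-|w|}$, so the sum is at most $n\sum_{k\geq 0}\gamma^k \lesssim n$ using $\gamma < 1$ from Proposition~\ref{prop transition} under Assumption~\ref{contrainte constante}. For the full tree I would group pairs by their most recent common ancestor $a$ at level $k$. Setting $i = |u| - k$ and $j = |w| - k$ so that $\mathbb{D}(u,w) = i+j$, the number of ordered pairs $(u,w)$ with $a(u,w)=a$ and $(i,j)$ prescribed is $1$, $2^j$, $2^i$, or $2^{i+j-1}$ according to whether $(i,j)=(0,0)$, $i=0 < j$, $j=0 < i$, or $i,j\geq 1$ (the last case forces $u$ and $w$ to lie in different subtrees of $a$). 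Summing yields
\[
\sum_{(u,w):\,a(u,w)=a}\gamma^{\mathbb{D}(u,w)} = 1 + 2\sum_{j=1}^{N_n-k}(2\gamma)^j + \tfrac{1}{2}\sum_{i,j=1}^{N_n-k}(2\gamma)^{i+j} = O(1)
\]
whenever $2\gamma < 1$, which is exactly what Assumption~\ref{the full tree assumption} provides through~\eqref{condition norme operateur}. Since there are $2^k$ ancestors at level $k$ and $\sum_{k=0}^{N_n}2^k \sim 2^{N_n} \sim n$, we conclude $\sum_{u,w}\gamma^{\mathbb{D}(u,w)} \lesssim n$, hence $\E_\mu\bigcro{(\widetilde D_n(y) - D(y))^2} \lesssim n^{-1}$ uniformly in $y$, $B$ and $\rho$.

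The main obstacle is precisely the full-tree counting: the threshold $2\gamma < 1$ is sharp, as otherwise the geometric decay of correlations is overwhelmed by the exponential branching of the binary tree and the pairwise covariances aggregate to an order strictly worse than $n^{-1}$. This is exactly why Assumption~\ref{the full tree assumption} strengthens the sparse-tree requirement $\gamma < 1$ to the quantitative spectral control~\eqref{condition norme operateur}, and why no analogous strengthening is needed in the sparse case.
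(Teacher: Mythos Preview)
Your proof is correct and follows the paper's overall strategy: establish a uniform lower bound on $D(y)$, reduce to the variance of the unthresholded empirical mean, apply the covariance inequality of Proposition~\ref{moment bound}, and then count $\sum_{u,w}\gamma^{\mathbb{D}(u,w)}$ on the tree.

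Two technical steps are handled differently from the paper, and in each case your version is a bit cleaner. For the threshold, the paper writes $(D_n(y)_{\varpi_n}-D(y))^2 \lesssim (D_n(y)-D(y))^2 + {\bf 1}_{\{D_n(y)<\varpi_n\}}$ and then bounds the probability of $\{D_n(y)<\varpi_n\}$ via Chebyshev using the lower bound on $D(y)$; your observation that $D(y)=\max(D(y),\varpi_n)$ for large $n$ combined with the $1$-Lipschitz property of $\max(\cdot,\varpi_n)$ gives the reduction in one line. For the full-tree count, the paper builds a recursive formula $\mathcal{N}_k$ for the contribution of level $k$ and bounds it by $2^k$; your grouping by the most recent common ancestor gives the closed form $1+2\sum(2\gamma)^j+\tfrac12\big(\sum(2\gamma)^j\big)^2=O(1)$ per ancestor, which makes the role of the threshold $2\gamma<1$ completely transparent. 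Your argument for the lower bound on $D(y)$ via $D(y)=y\nu_B(y)/B(2y)$ and Lemma~\ref{minoration mes inv} is essentially the content of the paper's Lemma~\ref{minoration D}; note that after inserting $\nu_B(y)\geq c\,\varphi_B(y)$ the factor $B(2y)$ cancels, and what actually uses the Hölder bound on $B$ is the control of the integral $\int_0^y B(2s)/(e_{\min}s)\,ds$ needed to lower-bound the remaining exponential.
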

We first need the following estimate
\begin{lemma} \label{minoration D}
Work under Assumption \ref{contrainte constante}.
Let $d(\mathfrak{c})$ be defined as in Lemma \ref{minoration mes inv}. For every compact interval ${\mathcal D} \subset (d(\mathfrak{c}),\infty)$ such that $\inf {\mathcal D} \leq r/2$, we have
$$
\inf_{B \in {\mathcal F}^\lambda(\mathfrak{c}) \cap {\mathcal H}^s(2^{-1}{\mathcal D}, M)}\inf_{y\in {\mathcal D}}D(y) > 0.
$$
\end{lemma}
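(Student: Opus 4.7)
The plan is to exploit the algebraic identity $D(y) = y\,\nu_B(y)/B(2y)$, which is nothing but a direct rewriting of the invariant--measure representation \eqref{representation cle} in Proposition \ref{rep B via inv}. This turns a lower bound on $D(y)$ into a lower bound on the ratio $y\,\nu_B(y)/B(2y)$, uniformly in $y \in {\mathcal D}$ and $B \in {\mathcal F}^\lambda(\mathfrak{c}) \cap {\mathcal H}^s(2^{-1}{\mathcal D},M)$.

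First, since ${\mathcal D} \subset (d(\mathfrak{c}),\infty)$, Lemma \ref{minoration mes inv} is applicable and provides a constant $c(\mathfrak{c})>0$ (independent of $B$) such that
$$\nu_B(y) \geq c(\mathfrak{c})\,\varphi_B(y)\;\;\text{for every}\;\;y \in {\mathcal D},$$
with $\varphi_B$ as in \eqref{def varphi}. The crucial point is that $\varphi_B(y)$ carries the factor $B(2y)/(e_{\max}\,y)$, which cancels the $B(2y)$ of the denominator in the identity $D(y)=y\,\nu_B(y)/B(2y)$. Plugging the Lemma into the identity yields
$$D(y) \geq \frac{c(\mathfrak{c})}{e_{\max}}\exp\Big(-\frac{1}{e_{\min}}\int_0^y \frac{B(2s)}{s}\,ds\Big),\;\;y \in {\mathcal D}.$$

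It then remains to bound the integral in the exponent uniformly. I would split at $r/2$: the piece $\int_0^{r/2} B(2s)/s\,ds$ is bounded by $L$ thanks to \eqref{loc control} from the definition of ${\mathcal F}^\lambda(\mathfrak{c})$, while the piece $\int_{r/2}^y B(2s)/s\,ds$ is handled by the uniform $L^\infty$-bound $B\leq M$ coming from the Hölder hypothesis $B \in {\mathcal H}^s(2^{-1}{\mathcal D},M)$, together with the elementary estimate $\int_{r/2}^{y} s^{-1}ds \leq \log(2\sup{\mathcal D}/r)$. The main obstacle, as always in such estimates, is not the manipulation itself but the compatibility of ranges: the range $\{2s:s\in[r/2,y]\}=[r,2y]$ on which $B$ has to be $L^\infty$-controlled must fit inside the set on which the Hölder hypothesis is assumed, which is precisely the role played by the technical condition relating $\inf {\mathcal D}$ and $r/2$ in the statement and by the lower bound $d(\mathfrak{c})$. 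Once this is settled, the two pieces combine into a uniform upper bound on the exponent, and the stated strictly positive uniform lower bound on $D(y)$ follows at once.
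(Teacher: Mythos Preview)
Your proof is correct and follows essentially the same route as the paper: both use the identity $D(y)=y\,\nu_B(y)/B(2y)$ from \eqref{representation cle}, invoke Lemma~\ref{minoration mes inv} to get $\nu_B(y)\geq c\,\varphi_B(y)$ so that the factor $B(2y)$ cancels, and then bound $\int_0^y B(2s)/s\,ds$ by splitting at $r/2$ and using \eqref{loc control} together with the $L^\infty$ bound from the H\"older hypothesis. Your explicit remark about the compatibility of the range $[r,2y]$ with the interval on which the H\"older control is assumed is a useful clarification of a point the paper handles only implicitly.
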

\begin{proof}[Proof of Lemma \ref{minoration D}] 
By \eqref{representation cle} and the definition of $\varphi_B$ in \eqref{def varphi}, we readily have
$$
D(y) =  \frac{1}{e_{\max}}\varphi_B(y)^{-1}\nu_B(y)\exp\big(-\int_0^y \tfrac{B(2s)}{e_{\min} s}ds\big).
$$
Since $B\in {\mathcal F}^\lambda(\mathfrak{c})\cap {\mathcal H}^s(2^{-1}{\mathcal D}, M)$, by applying \eqref{loc control} and \eqref{poly control} successively, we obtain
\begin{align*}
\int_{0}^{\sup {\mathcal D}} \frac{B(2s)}{e_{\min} s}ds & \leq e_{\min}^{-1}L+\int_{r/2}^{\sup \mathcal{D}} \frac{B(2s)}{e_{\min} s}ds \\
& \leq e_{\min}^{-1}(L + M \log \tfrac{\sup {\mathcal D}}{r/2}) < \infty
\end{align*} 
where we used that $\inf {\mathcal D} \leq r/2$. It follows that
$$
\inf_{y\in {\mathcal D}}\exp\big(-\int_0^y \tfrac{B(2s)}{e_{\min} s}ds\big) \geq \exp\big(-e_{\min}^{-1}(L + M \log \tfrac{\sup {\mathcal D}}{r/2}) \big)>0
$$
and Lemma \ref{minoration D} follows by applying Lemma \ref{minoration mes inv}.
\end{proof}
\begin{proof}[Proof of Proposition \ref{moment lemma}]
Since $D_n(y)$ is bounded, we have
\begin{equation} \label{take off varpi}
\big(D_n(y)_{\varpi_n}-D(y)\big)^2\lesssim \big(D_n(y)-D(y)\big)^2+{\bf 1}_{\{D_n(y) < \varpi_n\}}.
\end{equation}
Next, take $n$ sufficiently large, so that 
$$
0 < \varpi_n \leq q=\tfrac{1}{2}\inf_{B \in {\mathcal F}^\lambda(\mathfrak{c}) \cap {\mathcal H}^s(2^{-1}{\mathcal D}, M)}\inf_{y \in {\mathcal D}}D(y)
$$
a choice which is possible thanks to Lemma \ref{minoration D}.
Since 
$$\{D_n(y) < \varpi_n\}\subset \{D_n(y)-D(y)< -q\},$$
integrating \eqref{take off varpi}, we have that $\E_{\mu}\big[\big(D_n(y)_{\varpi_n}-D(y)\big)^2\big]$ is less than a constant times
\begin{align*}
 \E_{\mu}\big[\big(D_n(y)-D(y)\big)^2\big] + \PP_{\mu}\big(|D_n(y)-D(y)| \geq q\big),
\end{align*}
which in turn is less than a constant times $\E_{\mu}\big[\big(D_n(y)-D(y)\big)^2\big]$.
Set $G(x, v) = \frac{1}{v}{\bf 1}_{\{x \leq 2y\}}$ and $H(x)={\bf 1}_{\{x \geq y\}}$ and note that $G$ and $H$ are bounded on ${\mathcal S}$ (and also uniformly in $y \in {\mathcal D}$).
It follows that
\begin{align*}
D_n(y) -D(y) & = n^{-1}\sum_{u \in {\mathcal U}_n}\Big( G(\xi_{u^-}, \tau_{u^-})H(\xi_u)-\E_{\nu_B}\big[G(\xi_{u^-}, \tau_{u^-})H(\xi_u)\big]\Big). 
\end{align*}
We then apply \eqref{covariance GH} of Proposition \ref{moment bound} to infer, with the same notation that
\begin{align*}
& \E_{\mu}\big[\big(D_n(y)-D(y)\big)^2\big] \\
 =&\;n^{-2}\sum_{u,w \in {\mathcal U}_n} \E_{\mu}\big[Z(\xi_{u^-},\tau_{u^-},\xi_u)Z(\xi_{w^-},\tau_{w^-},\xi_w)\big] \\
\lesssim  &\; n^{-2}\sum_{u,w \in {\mathcal U}_n}\gamma^{\mathbb{D}(u,w)}
\end{align*}
uniformly in $y\in {\mathcal D}$ and $B \in {\mathcal F}^\lambda(\mathfrak{c})$. We further separate the sparse and full tree cases.
\begin{proof}[The sparse tree case] We have
$
\sum_{u,w \in {\mathcal U}_n}\gamma^{\mathbb{D}(u,w)} = \sum_{1 \leq |u|,|w| \leq n}\gamma^{||u|-|w||}$ and by Proposition \ref{prop transition}, this last quantity is of order $n$.
\end{proof}
\begin{proof}[The full tree case]
We have $n \sim 2^{N_n}$, where $N_n$ is the number of generations used to expand ${\mathcal U}_n$. We evaluate
\begin{equation} \label{the big sum}
\mathcal {N}_k=\sum_{|u|=k}\sum_{w \in\, {\mathcal U}_n} \gamma^{\mathbb{D}(u,w)}\;\;\text{for}\;\;k=0,\ldots, N.
\end{equation}
For $k=0$, we have
$${\mathcal N}_0=1+2\gamma + 4\gamma^2 +\ldots + 2^{N_n}\gamma^{N_n} = \tfrac{1-(2\gamma)^{N_n+1}}{1-2\gamma}=: \phi_\gamma(N_n).$$
Under Assumption \ref{the full tree assumption}, by Proposition \ref{prop transition}, we have $\gamma < \frac{1}{2}$ therefore $\phi_\gamma(N_n)$ is bounded as $n \rightarrow \infty$. 
For $k=1$, if we start with the node $u=(\emptyset, 0)$, then the contribution of its descendants in \eqref{the big sum} is given by $\phi_\gamma(N_n-1)$, to which we must add $\gamma$ for its ancestor corresponding to the node $u=\emptyset$ and also $\gamma\phi_\gamma(N_n)$ for the contribution of the second lineage of the node $u=\emptyset$. 
Finally, we must repeat the argument for the node $u=(\emptyset, 1)$. We obtain
$$\mathcal{N}_1 = 2\big(\phi_\gamma(N_n-1)+\gamma+\gamma^2\phi_\gamma(N_n-1)\big).$$
More generally, proceeding in the same manner, we derive 
\begin{align} 
\mathcal{N}_{k}  & =2^k\Big(\phi_\gamma(N_n-k)+\big(\gamma+\gamma^2\phi_\gamma(N_n-k)\big)+ \ldots \nonumber \\
&+ \gamma^i+\gamma^{i+1}\phi_\gamma\big(N_n-k+(i-1)\big)+ \ldots+\big(\gamma^k+\gamma^{k+1}\phi_\gamma(N_n-1)\big)\Big) \label{def Vk} 
\end{align}
for $k=1,\ldots, N_n$, and this last quantity is of order $2^k$. It follows that
$$\sum_{u,w \in \,{\mathcal U}_n}\gamma^{\mathbb{D}(u,w)} = \sum_{k=0}^{N_n} {\mathcal N}_k \lesssim \sum_{k=1}^{N_n} 2^k \lesssim 2^{N_n} \lesssim  n$$
and the conclusion follows likewise.
\end{proof}
Putting together the sparse and  full tree case, we obtain the proposition.
\end{proof}
Let $\widehat \nu_n(dy) = n^{-1}\sum_{u \in {\mathcal U}_n}\delta_{\xi_u}(dy)$ denote the empirical measure of the observation $(\xi_u, u \in {\mathcal U}_n)$.
\begin{prop}\label{moment lemma bis}
Work under Assumption \ref{contrainte constante} in the sparse tree case and Assumption \ref{the full tree assumption} in the full tree case. Let $\mu$ be a probability on ${\mathcal S}$ such that $\int_{\mathcal{S}}{\mathbb V}(\boldsymbol{x})^2\mu(d\boldsymbol{x})<\infty$.
We have
\begin{equation} \label{moment mes inv}
\sup_{y \in {\mathcal D}}\E_{\mu}\big[\big(K_{h_n}\star \widehat \nu_n(y)-K_{h_n} \star \nu_B(y)\big)^2\big] \lesssim |\log h_n| {(nh_n)}^{-1}
\end{equation}
uniformly in $B \in {\mathcal F}^\lambda(\mathfrak{c})$.
\end{prop}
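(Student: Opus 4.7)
The plan is to decompose the error as a centered empirical sum, expand its mean squared norm into a double sum of covariances, and invoke Proposition \ref{moment bound bis} to bound each covariance by the announced minimum. The key obstacle will be the combinatorial summation over pairs of nodes in the full tree case, which is what forces the extraneous logarithmic factor and relies on $\gamma<\tfrac{1}{2}$ provided by Assumption \ref{the full tree assumption}.

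Setting $H(z)=K(-z)$, I would first write
\[
K_{h_n}\star \widehat\nu_n(y)-K_{h_n}\star \nu_B(y)=n^{-1}h_n^{-1}\sum_{u\in {\mathcal U}_n}\widetilde H\bigl(\tfrac{\xi_u-y}{h_n}\bigr),
\]
with $\widetilde H$ as defined in Proposition \ref{moment bound bis}. Expanding the square produces
\[
n^{-2}h_n^{-2}\sum_{u,w\in {\mathcal U}_n}\E_\mu\bigl[\widetilde H(\tfrac{\xi_u-y}{h_n})\,\widetilde H(\tfrac{\xi_w-y}{h_n})\bigr].
\]
For the off-diagonal terms $u\neq w$, Proposition \ref{moment bound bis} (applied with $x_0=y$, valid since $y$ lies in the interior of the support of the kernel translation) yields the covariance bound $\gamma^{\mathbb{D}(u,w)}\wedge h_n\gamma^{\mathbb{D}(u,a(u,w))\vee \mathbb{D}(w,a(u,w))}$, uniformly in $B$. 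For the diagonal $u=w$, I bound the variance by $\E_\mu[H^2((\xi_u-y)/h_n)]={\mathcal P}_B^{|u|}(H^2(\tfrac{\cdot-y}{h_n}))$ integrated against $\mu$; arguing as for \eqref{obtention h} but with $H^2$ in place of $H$ (combined with Lemma \ref{borne sup nu} to control the invariant density) gives a bound $\lesssim h_n$ uniformly in $u$ and $B$.

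It remains to bound the double sum. In the sparse tree case, the nodes lie on a fixed line of descent, so $\mathbb{D}(u,w)=\mathbb{D}(u,a(u,w))\vee \mathbb{D}(w,a(u,w))=\bigl||u|-|w|\bigr|$; the min collapses to $h_n\gamma^{||u|-|w||}$, summation yields $O(nh_n)$, and normalisation by $n^{-2}h_n^{-2}$ gives the rate $(nh_n)^{-1}$, which is in fact better than the announced one. In the full tree case, I fix $u$ and stratify the sum over $w$ by the most recent common ancestor $a=a(u,w)$: writing $i=|u|-|a|$ and $j=|w|-|a|$, the number of admissible $w$ is $2^j$ when $i=0$ and $2^{j-1}$ when $i\geq 1$, while $\mathbb{D}(u,w)=i+j$ and $\mathbb{D}(u,a)\vee \mathbb{D}(w,a)=i\vee j$. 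Introducing the cut-off $K=\lceil \log(h_n^{-1})/\log(\gamma^{-1})\rceil$, I split the sum according to whether $i\wedge j\leq K$ (using the bound $h_n\gamma^{i\vee j}$) or $i\wedge j>K$ (using $\gamma^{i+j}$). Under Assumption \ref{the full tree assumption} we have $2\gamma<1$, so the series $\sum_j 2^j\gamma^{i\vee j}$ is uniformly bounded in $i$; an elementary computation then shows that each region contributes $O(h_n|\log h_n|)$, uniformly in $u$.

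Summing over $u\in {\mathcal U}_n$ yields a total of $O(nh_n|\log h_n|)$ for the double sum, which after multiplication by $n^{-2}h_n^{-2}$ produces the claimed $|\log h_n|(nh_n)^{-1}$ bound. Uniformity in $y\in {\mathcal D}$ and in $B\in {\mathcal F}^\lambda(\mathfrak{c})\cap {\mathcal H}^s(2^{-1}{\mathcal D},M)$ is inherited from the uniformity in Proposition \ref{moment bound bis} and Lemma \ref{borne sup nu}. The hardest step is really the full-tree combinatorial bookkeeping: the tension between the branching factor $2^j$ and the ergodic decay $\gamma^j$ is precisely what produces both the need for the strengthened assumption $\gamma<1/2$ and the logarithmic loss that separates this Markov-chain-on-a-tree rate from the classical i.i.d.\ density estimation rate.
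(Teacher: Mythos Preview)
Your proposal is correct and follows essentially the same route as the paper: expand the squared error as a double sum, invoke Proposition~\ref{moment bound bis} termwise, then control the resulting combinatorial sum over pairs of nodes. The only noticeable difference is bookkeeping in the full tree case: the paper stratifies by the level $k=|u|$ and recycles the quantities $\mathcal{N}_k$ already computed in the proof of Proposition~\ref{moment lemma}, whereas you parametrize by the common-ancestor distances $(i,j)$ and exploit the clean factorization $\gamma^{\mathbb{D}(u,w)}\wedge h_n\gamma^{\mathbb{D}(u,a)\vee\mathbb{D}(w,a)}=\gamma^{i\vee j}\bigl(\gamma^{i\wedge j}\wedge h_n\bigr)$, which makes the cut-off at $i\wedge j\approx K$ transparent. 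Both organizations yield the same $O(h_n|\log h_n|)$ bound per node $u$ and hence the claimed rate; your version is arguably a bit more self-contained since it does not rely on the $\mathcal{N}_k$ computation from the previous proposition.
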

\begin{proof}
We have, with the notation of Proposition \ref{moment bound bis}
\begin{align}
& \E_{\mu}\big[\big(K_{h_n}\star \widehat \nu_n(y)-K_{h_n} \star \nu_B(y)\big)^2\big] \nonumber \\
 =\, &(nh_n)^{-2}\E_{\mu}\Big[\Big(\sum_{u\in\, {\mathcal U}_n}K\big(\tfrac{\xi_u-y}{h_n}\big)-\E_{\nu_B}\big[K\big(\tfrac{\xi_u-y}{h_n}\big)\big]\Big)^2\Big] \nonumber\\
 =\, &(nh_n)^{-2}\sum_{u,w\,\in {\mathcal U}_n}\mathbb{E}_{\mu}\big[\widetilde K\big(\tfrac{\xi_u-y}{h_n}\big) \widetilde K\big(\tfrac{\xi_w-y}{h_n}\big)\big] \nonumber\\
\lesssim\, &(nh_n)^{-2} \sum_{u,w \in\, {\mathcal U}_n}  \gamma^{\,\mathbb{D}(u,w)} \bigwedge h_n \label{estimation dist arbre}
\gamma^{\,\mathbb{D}(u,a(u,w)) \vee \,\mathbb{D}(w,a(u,w))} 
\end{align}
by applying \eqref{first covariance} of Proposition \ref{moment bound bis}.
It remains to estimate \eqref{estimation dist arbre}.
\begin{proof}[The sparse tree case] We have $a(u,w)=u$ if $|u|\leq |w|$ and $a(u,w)=w$ otherwise. It follows that 
$$
\E_{\mu}\big[\big(K_{h_n}\star \widehat \nu_n(y)-K_{h_n} \star \nu_B(y)\big)^2\big] 
\lesssim n^{-2}h_n^{-1}\sum_{u,w \in \,{\mathcal U}_n} \gamma^{\,\mathbb{D}(u,w)},$$
and since $\sum_{u,w \in \,{\mathcal U}_n} \gamma^{\,\mathbb{D}(u,w)}=\sum_{1 \leq |u|,|w| \leq n}\gamma^{||u|-|w||}$ is of order $n$
as soon as $\gamma <1$, we obtain the result.
\end{proof}
\begin{proof}[The full tree case] The computations are a bit more involved. 
Let us evaluate
$$\sum_{|u|=k}\sum_{w \in {\mathcal U}_n}\gamma^{\mathbb{D}(u,w)}\bigwedge h_n\gamma^{\,\mathbb{D}(u,a(u,w)) \vee \,\mathbb{D}(w,a(u,w))}.$$
We may repeat the argument displayed in \eqref{def Vk} in order to evaluate the contribution of the term involving $\gamma^{\,\mathbb{D}(u,a(u,w))}$. However, in the estimate ${\mathcal N}_k$, each term $\gamma^i+\gamma^{i+1}\phi_\gamma\big(N_n-k+(i-1)\big)$ in formula \eqref{def Vk} may be replaced  by $h_n\big(\gamma^i+\gamma\phi_\gamma\big(N_n-k+(i-1)\big)\big)$ up to constants. This corresponds to the correction given by $h_n\gamma^{\,\mathbb{D}(u,a(u,w)) \vee \,\mathbb{D}(w,a(u,w))}$. 
As a consequence, we obtain 
\begin{align*}
& \sum_{|u|=k}\sum_{w \in {\mathcal U}_n}\gamma^{\mathbb{D}(u,w)}\bigwedge h_n\gamma^{\,\mathbb{D}(u,a(u,w)) \vee \,\mathbb{D}(w,a(u,w))} \\
\lesssim &\; 2^k \sum_{i=1}^k h_n\big(\gamma^i+\gamma\phi_\gamma(N_n-k+i-1)\big) \bigwedge \gamma^i\big(1+\gamma \phi_\gamma(N_n-k+i-1)\big) \\
\lesssim &\; 2^k \sum_{i=1}^k h_n \wedge \gamma^{i}.
\end{align*}
Define $k_n^\star= \lfloor \tfrac{|\log h_n|}{|\log \gamma|}\rfloor$. We readily derive
$$2^k\sum_{i=1}^k h_n \wedge \gamma^{i} = 2^k\big(\sum_{i=1}^{k_n^\star}h_n+\sum_{i=k_n^\star+1}^k \gamma^i\big) \lesssim 2^k h_n|\log h_n|,$$
ignoring the second term if $k_n^\star+1 \geq k$.
Going back to \eqref{estimation dist arbre}, it follows that
\begin{align*}
&(nh_n)^{-2} \sum_{u,w \in\, {\mathcal U}_n}  \gamma^{\,\mathbb{D}(u,w)} \bigwedge h_n \gamma^{\,\mathbb{D}(u,a(u,w)) \vee \,\mathbb{D}(w,a(u,w))}  \\
=\, &(nh_n)^{-2}  \sum_{k=0}^{N_n}\sum_{|u|=k}\sum_{v \in {\mathcal U}_n}   \gamma^{\,\mathbb{D}(u,w)} \bigwedge h_n \gamma^{\,\mathbb{D}(u,a(u,w)) \vee \,\mathbb{D}(w,a(u,w))} \\
\lesssim\, & (nh_n)^{-2} \sum_{k=0}^{N_n} 2^k h_n|\log h_n| \lesssim |\log h_n| (nh_n)^{-1}
\end{align*}
and the conclusion follows in the full case.
\end{proof}
Putting together the sparse and  full tree cases, we obtain the proposition.
\end{proof}
\subsection{Proof of Theorem \ref{upper bound}} \label{final proof}
From
$$\widehat B_n(2y)= y\frac{n^{-1}\sum_{u \in {\mathcal U}_n}K_{h_n}(\xi_u-y)}{n^{-1}\sum_{u \in {\mathcal U}_n} \frac{1}{\tau_{u^-}}{\bf 1}_{\displaystyle \{\xi_{u^-}\leq 2y, \xi_u \geq y\}} \bigvee \varpi_n}$$
and
$$B(2y) =y \frac{\nu_B(y)}{ \E_{\nu_B}\big[\tfrac{1}{\tau_{u^-}}{\bf 1}_{\{\xi_{u^-}\leq 2y,\,\xi_u \geq y\}}\big]},$$
we plan to use the following decomposition
$$\widehat B_n(2y)-B(2y) = y(I+II+III),$$
with
\begin{align*}
I & =\frac{K_{h_n}\star \nu_B(y)-\nu_B(y)}{D(y)}, \\
II & =\frac{K_{h_n}\star \widehat \nu_n(y)-K_{h_n}\star \nu_B(y)}{D_n(y)_{\varpi_n}},\\
III & =\frac{K_{h_n}\star \nu_B(y)}{D_n(y)_{\varpi_n} D(y)}\big(D(y)-D_n(y)_{\varpi_n}\big), 
\end{align*}
where $D(y)$ and $D_n(y)_{\varpi}$ are defined in \eqref{def D} and \eqref{def D_n} respectively.
It follows that
\begin{align*}
\|\widehat B_n-B\|_{L^2({\mathcal D})}^2  & = 2\int_{\tfrac{1}{2}{\mathcal D}}\big(\widehat B_n(2y)-B(2y)\big)^2dy 
 \lesssim  IV+V+VI,
\end{align*}
with
\begin{align*}
IV & = \int_{\tfrac{1}{2}{\mathcal D}}\big(K_{h_n}\star \nu_B(y)-\nu_B(y)\big)^2\tfrac{y^2}{D(y)^2}dy\\
V & =  \int_{\tfrac{1}{2}{\mathcal D}}\big(K_{h_n}\star \widehat \nu_n(y)-K_{h_n}\star \nu_B(y)\big)^2D_n(y)_{\varpi}^{-2}y^2dy\\
VI & = \int_{\tfrac{1}{2}{\mathcal D}} \big(D_n(y)_\varpi-D(y)\big)^2\big(K_{h_n}\star \nu_B(y)\big)^2\big(D_n(y)_\varpi D(y)\big)^{-2}y^2dy.
\end{align*}
\begin{proof}[The term IV] We get rid of the term $\tfrac{y^2}{D(y)^2}$ by Lemma \ref{minoration D} and the fact that ${\mathcal D}$ is bounded. By Assumption \ref{prop K} and classical kernel approximation, we have for every $0 < s \leq n_0$
\begin{equation} \label{biais mesure invariante}
IV \lesssim \|K_{h_n}\star \nu_B-\nu_B\big\|_{L^2(2^{-1}{\mathcal D})}^2 \lesssim |\nu_B|_{{\mathcal H}^s(2^{-1}{\mathcal D})}^2h_n^{2s}.
\end{equation}
\begin{lemma} \label{reg transfer} Let ${\mathcal D}\subset (0,\infty)$ be a compact interval. Let $B\in {\mathcal F}^\lambda(\mathfrak{c})$ for some ${\mathfrak{c}}$ satisfying Assumption \ref{contrainte constante}. We have
$$\|\nu_B\|_{{\mathcal H}^s(2^{-1}\mathcal{D})} \leq \psi\big(e_{\min},e_{\max}, {\mathcal D},\|B\|_{{\mathcal H}^s({\mathcal D})}\big)$$
for some continuous function $\psi$. 
\end{lemma}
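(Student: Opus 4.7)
The plan is to exploit the explicit integral representation of the invariant marginal obtained in the proof of Proposition~\ref{rep B via inv}: after integrating in $v'$,
\begin{equation}\label{rep nu B plan}
\nu_B(y) \;=\; \frac{B(2y)}{y}\,\Phi_B(y), \qquad
\Phi_B(y)\;=\;\int_{\mathcal{E}}\int_0^{2y}\frac{\nu_B(x,dv)}{v}\exp\Big(-\int_{x/2}^{y}\frac{B(2s)}{vs}\,ds\Big)dx.
\end{equation}
The prefactor $y\mapsto B(2y)/y$ already belongs to $\mathcal{H}^s(2^{-1}\mathcal{D})$ with norm controlled by $\|B\|_{\mathcal{H}^s(\mathcal{D})}$, $e_{\min}$ and $\inf\mathcal{D}$. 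So the task reduces to controlling $\|\Phi_B\|_{\mathcal{H}^s(2^{-1}\mathcal{D})}$ in terms of the same quantities, using Lemma~\ref{borne sup nu} to bound $\nu_B$ in sup-norm on the compacts of $(0,\infty)$ that will arise.

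I would first dispatch the case $s\in(0,1]$. The exponential kernel in \eqref{rep nu B plan} has $y$-derivative equal to itself times $-B(2y)/(vy)$, uniformly bounded on $2^{-1}\mathcal{D}\times(0,2y)\times\mathcal{E}$ by $\|B\|_{L^{\infty}(\mathcal{D})}/(e_{\min}\inf\mathcal{D})$. The upper limit $x=2y$ contributes the boundary term $\bar\nu_B(2y):=\int \nu_B(2y,dv)/v$, whose sup-norm on $2y\in\mathcal{D}$ is bounded by $e_{\min}^{-1}\sup_{x\in\mathcal{D}}\nu_B(x)$ via Lemma~\ref{borne sup nu}. Assembling these facts, differencing in $y$ and applying the product rule for Hölder semi-norms yields the conclusion with $\psi$ a polynomial in $\|B\|_{\mathcal{H}^s(\mathcal{D})}$.

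For $s>1$, I would proceed by induction on $k=\lfloor s\rfloor$. Differentiating \eqref{rep nu B plan} $k$ times through Leibniz's rule produces a finite linear combination of (i) derivatives of $B(2\cdot)/(\cdot)$ up to order $k$, controlled by $\|B\|_{\mathcal{H}^s(\mathcal{D})}$, and (ii) derivatives of $\Phi_B$ up to order $k$, which by iterated Leibniz reduce to integrated-kernel terms involving $B,B',\ldots$ on $\mathcal{D}$ plus boundary contributions of the form $\bar\nu_B^{(j)}(2y)$ for $0\le j\le k-1$. Since $y\in 2^{-1}\mathcal{D}\Rightarrow 2y\in\mathcal{D}$, these boundary terms require $\mathcal{H}^{s-1}$ control of $\bar\nu_B$ on $\mathcal{D}$, which is obtained by applying the same decomposition \eqref{rep nu B plan} one scale up (writing $\bar\nu_B(z)=(B(2z)/z)\widetilde\Phi_B(z)$ for the auxiliary kernel $\widetilde\Phi_B$) and invoking the inductive hypothesis. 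The recursion through dyadic shells $\mathcal{D},2\mathcal{D},4\mathcal{D},\ldots$ terminates effectively because the kernel in $\Phi_B$ carries the exponential damping produced by the integrated $B(2s)/(vs)$, while the polynomial lower bound \eqref{poly control} on $B$ forces $\nu_B$ itself to decay like the Lyapunov weight $\mathbb{V}$ of Proposition~\ref{prop transition}; consequently the contribution of the $j$-th dyadic shell is bounded in sup-norm independently of $j$ and one closes the recursion after finitely many steps with constants depending only on $\mathfrak{c},e_{\min},e_{\max}$ and the endpoints of $\mathcal{D}$.

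The main obstacle is the bookkeeping of this dyadic cascade of boundary terms and verifying that the successive bounds depend polynomially on $\|B\|_{\mathcal{H}^s(\mathcal{D})}$ only, rather than on higher Hölder norms of $B$ on the enlarged intervals $2^j\mathcal{D}$: this is ensured by the fact that only boundedness of $B$ on $2^j\mathcal{D}$ (which is built into $B\in\mathcal{F}^\lambda(\mathfrak{c})$) enters the sup-norm bounds on $\bar\nu_B^{(j)}$, the genuine Hölder regularity being needed only at the final outermost layer $x\in\mathcal{D}$. Once this combinatorics is carried out, the continuous function $\psi$ is extracted as a polynomial in $\|B\|_{\mathcal{H}^s(\mathcal{D})}$ whose coefficients depend continuously on $(e_{\min},e_{\max},\mathcal{D})$.
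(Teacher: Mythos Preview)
Your approach --- factoring out the prefactor $B(2y)/y$ and controlling the remaining integral via the explicit kernel --- is exactly the paper's route. The paper writes the inner integral as $\int_0^{2x}\Lambda_B(x,y)\,dy$ with $\Lambda_B(x,y)=\int_{\mathcal E}\tfrac{\nu_B(y,dv')}{v'}\exp\bigl(-\int_{y/2}^x\tfrac{B(2s)}{v's}\,ds\bigr)$, observes that $x\mapsto\Lambda_B(x,y)$ lies in $\mathcal H^s(2^{-1}\mathcal D)$ with norm bounded by a continuous function of $y$ and $\|B\|_{\mathcal H^s(\mathcal D)}$, and then simply appeals to the product representation. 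For $s\le 1$ your argument and the paper's coincide and are complete.

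For $s>1$ you correctly flag a boundary-term cascade that the paper's (very brief) proof does not spell out: differentiating the upper limit $2x$ produces terms involving $\bar\nu_B(2x)=\int_{\mathcal E}\nu_B(2x,dv)/v$, whose further regularity must then be established on $\mathcal D$ rather than on $2^{-1}\mathcal D$. However, your proposed resolution has a concrete gap. You assert that ``only boundedness of $B$ on $2^j\mathcal D$ (which is built into $B\in\mathcal F^\lambda(\mathfrak c)$) enters the sup-norm bounds on $\bar\nu_B^{(j)}$''. This fails on two counts. First, $\mathcal F^\lambda(\mathfrak c)$ imposes \emph{no} upper bound on $B$: the defining constraints are the integral conditions \eqref{loc control} near the origin and the polynomial \emph{lower} bound \eqref{poly control}; nothing in $\mathfrak c$ controls $\sup_{2^j\mathcal D}B$. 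Second, boundedness alone would not suffice even if it were available: already for $1<s\le 2$ one needs $\bar\nu_B\in\mathcal H^{\{s\}}(\mathcal D)$, and since $\bar\nu_B(z)=\tfrac{B(2z)}{z}\widetilde\Phi_B(z)$ with $2z\in 2\mathcal D$, this forces \emph{H\"older} regularity of $B$ on $2\mathcal D$, which is not majorised by $\|B\|_{\mathcal H^s(\mathcal D)}$. The exponential damping of the kernel and the Lyapunov decay of $\nu_B$ handle the integrated pieces but say nothing about the prefactors $B(2^{j+1}x)/(2^jx)$ that reappear at every level of the cascade. As written, your inductive step does not close with the dependence of $\psi$ announced in the lemma.
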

\begin{proof}[Proof of Lemma \ref{reg transfer}] 
Define 
$$\Lambda_B(x,y) = \int_{\mathcal E}\frac{\nu_B(y,dv')}{v'}\exp\big(-\int_{y/2}^x\tfrac{B(2s)}{v's}ds\big).$$
If $B\in{\mathcal H}^s({\mathcal D})$, then $x\leadsto \Lambda_B(x,y) \in {\mathcal H}^s(2^{-1}{\mathcal D})$ for every $y \in [0,\infty)$, and we have
$$\|\Lambda_B(\cdot,y)\|_{{\mathcal H}^s(2^{-1}{\mathcal D})} \leq \psi_1(y,\|B\|_{{\mathcal H}^s({\mathcal D})}, e_{\min}, e_{\max})$$
for some continous function $\psi_1$. The result is then a consequence of the representation
$\nu_B(x) = \frac{B(2x)}{x}\int_0^{2x} \Lambda_B(x,y)dy$.
\end{proof}
Going back to \eqref{biais mesure invariante} we infer from Lemma \ref{reg transfer} that $|\nu_B|_{{\mathcal H}^s(2^{-1}\mathcal{D})}$ is bounded above by
a constant that depends on $e_{\min}$, $e_{\max}$, ${\mathcal D}$ and $\|B\|_{{\mathcal H}^s({\mathcal D})}$ only. It follows that 
\begin{equation} \label{controle IV}
IV \lesssim h_n^{2s}
\end{equation}
uniformly in $B \in {\mathcal H}^s({\mathcal D},M)$.
\end{proof}
\begin{proof}[The term V] We have
$$\E_{\mu}[V] \leq 
\varpi_n^{-2}|{\mathcal D}| \sup_{y \in 2^{-1}{\mathcal D}} y^2\E_{\mu}\big[\big(K_{h_n}\star \widehat \nu_n(y)-K_{h_n} \star \nu_B(y)\big)^2\big].$$
By \eqref{moment mes inv} of Proposition \ref{moment lemma bis} we derive
\begin{equation} \label{controle V}
\E_{\mu}[V] \lesssim \varpi_n^{-2}|\log h_n|(nh_n)^{-1}
\end{equation}
uniformly in $B \in {\mathcal F}^\lambda(\mathfrak{c})$.
\end{proof}
\begin{proof}[The term VI] First, by Lemma \ref{minoration D}, the estimate
$$\inf_{B \in {\mathcal F}^\lambda(\mathfrak{c})}\inf_{y\in 2^{-1}{\mathcal D}}D_n(y)_\varpi D(y) \gtrsim \varpi_n$$
holds. Next, 
\begin{align}
\sup_{y \in  2^{-1}{\mathcal D}} |K_{h_n}\star \nu_B(y)| & = \sup_{y \in  2^{-1}{\mathcal D}}\big|\int_{[0,\infty)}K_{h_n}(z-y)\nu_B(z)dz\big| \nonumber \\
& \leq \sup_{y \in 2^{-1}{\mathcal D}_{h_n}}\nu_B(y)\|K\|_{L^1([0,\infty))} \label{borne phi l1}
\end{align}
where $2^{-1}{\mathcal D}_{h_n} = \{y+z,\;y\in  2^{-1}{\mathcal D},\;z\in \text{supp}(K_{h_n})\} \subset \widetilde {\mathcal D}$, for some compact interval $\widetilde {\mathcal D}$ since $K$ has compact support by Assumption \ref{prop K}. By Lemma \ref{borne sup nu}, we infer that \eqref{borne phi l1} holds uniformly in $B \in {\mathcal F}^\lambda(\mathfrak{c})$. We derive
$$\E_{\mu}\big[VI\big] \lesssim \varpi_n^{-2}\sup_{y \in 2^{-1}{\mathcal D}}\E_{\mu}\big[\big(D_n(y)_{\varpi_n}-D(y)\big)^2\big].$$
Applying \eqref{convergence D_n} of Proposition \ref{moment lemma}, we conclude
\begin{equation} \label{controle VI}
\E_{\mu}\big[VI\big] \lesssim \varpi_n^{-2}n^{-1}
\end{equation}
uniformly in $B \in {\mathcal F}^\lambda(\mathfrak{c})$.
\end{proof}
\begin{proof}[Completion of proof of Theorem \ref{upper bound}] 
We put together the three estimates \eqref{controle IV}, \eqref{controle V} and \eqref{controle VI}. We obtain
\begin{align*}
\E_{\mu}\big[\|\widehat B_n-B\|_{L^2({\mathcal D})}^2\big] \lesssim  h_n^{2s}+\varpi_n^{-2}|\log h_n|(nh_n)^{-1}+ \varpi_n^{-2}n^{-1}
\end{align*}
uniformly in $B \in {\mathcal F}^\lambda(\mathfrak{c})\,\cap \,{\mathcal H}^s({\mathcal D},M)$. The choice $h_n \sim n^{-1/(2s+1)}$ and the fact that $\varpi_n^{-2}$ grows logarithmically in $n$ yields the rate $n^{-s/(2s+1)}$ up to log terms and the inessential supplementary multiplicative error factor $\varpi_n^{-1}$. The proof is complete.
\end{proof}
\section{Appendix} \label{appendix}
\subsection{Construction of the discrete model} \label{construction of the chain}
Fix an initial condition $\boldsymbol{x} = (x,v)\in {\mathcal S}$.
On a rich enough probability space, we consider a Markov chain on the binary tree $(\tau_u, u \in {\mathcal U})$ with transition $\rho(v,dv')$ and initial condition $v$:
if $u = (u_1,\ldots, u_k) \in {\mathcal U}$, we write $ui = (u_1,\ldots, u_k, i)$, $i=0,1$ for the two offsprings of $u$; we set $\tau_\emptyset=v$ and
$$\tau_{u0}\sim \rho(\tau_{u}, dv')\;\;\text{and}\;\;\tau_{u1}\sim  \rho(\tau_{u}, dv')$$ 
so that conditional on $\tau_{u}$, the two random variables $\tau_{u0}$ and $\tau_{u1}$ are independent.
We also pick a sequence of independent standard exponential random variables $\big({\bf e}_u, u \in {\mathcal U}\big)$, independent of $(\tau_u, u \in {\mathcal U})$.
The model  $\big((\xi_u, \tau_u), u \in {\mathcal U}\big)$ is then constructed recursively. We set
$$\xi_\emptyset=x,\;\;b_\emptyset=0,\;\;\tau_\emptyset = v\;\;\text{and}\;\;\zeta_\emptyset=F_{x,v}^{-1}({\bf e}_\emptyset)$$ 
where $F_{x,v}(t) = \int_0^t B\big(x\exp(vs)\big)ds$. For $u \in {\mathcal U}$ and $i=0,1$, we put
$$\xi_{u0}=\xi_{u1}=e^{\tau_u \zeta_u}\frac{\xi_u}{2},\;\;b_{u0}=b_{u1}=b_u+\zeta_u,\;\;\zeta_{ui}=F^{-1}_{\xi_{ui}, \tau_{ui}}({\bf e}_{ui}).$$
To each node $u \in {\mathcal U}$, we then associate the mark $(\xi_i, b_u, \zeta_u, \tau_u)$ of the size, date of birth, lifetime and growth rate respectively  of the individual labeled by $u$. One easily checks that Assumption \ref{basic assumption} guarantees that the model is well defined. In particular, since $B$ is locally bounded, we see that there is no accumulation of jumps almost-surely.
\subsection{Proof of Lemma \ref{counting property}} \label{A1}
Note first that 
$$\{C_{t+h}-C_t \geq 1\} = \{t < b_{\vartheta_{C_t}}+\zeta_{\vartheta_{C_t}}\leq t+h\}.$$
Since moreover $\xi_{\vartheta_{C_t}} = 
x\exp\big(\overline{{\mathcal V}}(b_{\vartheta_{C_t}})\big)2^{-C_t}$, it follows by \eqref{def div rate} that 
\begin{align*}
&\PP(C_{t+h}-C_t \geq 1\,|\,{\mathcal F}_t) \\
=& \int_{t-b_{\vartheta_{C_t}}}^{t+h-b_{\vartheta_{C_t}}}B\Big(\tfrac{xe^{\overline{{\mathcal V}}(b_{\vartheta_{C_t}})+s{\mathcal V}(s)}}{2^{C_t}}\Big)\exp\Big(-\int_0^s B\Big(\tfrac{xe^{\overline{{\mathcal V}}(b_{\vartheta_{C_t}})+s'{\mathcal V}(s')}}{2^{C_t}}
\Big)ds'\Big)ds.
\end{align*}
Introduce the quantity $B\big(xe^{\overline{{\mathcal V}}(b_{\vartheta_{C_t}})+{\mathcal V}(t)(t-b_{\vartheta_{C_t}})}2^{-C_t}\big)$ within the integral. Noting that $\overline{{\mathcal V}}(b_{\vartheta_{C_t}})+{\mathcal V}(t)(t-b_{\vartheta_{C_t}}) = \overline{{\mathcal V}}(t)$ we obtain the first part of the lemma thanks to the representation \eqref{rep chi} and the uniform continuity of $B$ over compact sets. For the second part, introduce the $({\mathcal F}_t)$-stopping time
$$\Upsilon_t = \inf\{s>t, C_s - C_t \geq 1\}$$
and note that $\{C_{t+h}-C_t \geq 1\}=\{\Upsilon_{t} \leq t+h\} \in {\mathcal F}_{\Upsilon_t}$. Writing
$$\{C_{t+h}-C_t \geq 2\} = \{\Upsilon_t < t+h,\;\Upsilon_{\Upsilon_{t}} \leq t+h\}$$
and conditioning with respect to ${\mathcal F}_{\Upsilon_t}$, we first have
\begin{align*}
& \PP(C_{t+h}-C_t \geq 2) \\
= &\; \E\Big[\int_t^{t+h-\Upsilon_t}B\Big(\tfrac{xe^{\overline{{\mathcal V}}(b_{\vartheta_{C_t}})+s{\mathcal V}(s)}}{2^{C_t}}\Big)e^{-\int_0^s B\big(\frac{xe^{\overline{{\mathcal V}}(b_{\vartheta_{C_t}})+s'{\mathcal V}(s')}}{2^{C_t}}
\big)ds'}ds\,{\bf 1}_{\{\Upsilon_t < t+h\}}
\Big] \\
\leq &\;
h \sup_{y \leq x \exp (2e_{\max} t)} 
B(y)\,\PP(\Upsilon_t < t+h).
\end{align*}
In the same way, $\PP(\Upsilon_t < t+h) \lesssim h$ and the conclusion follows.
\subsection*{Acknowledgements}  
{\small The research of M. Doumic is partly supported by the Agence Nationale de la Recherche, Grant No. ANR-09-BLAN-0218 TOPPAZ. The research of M. Hoffmann is partly supported by the Agence Nationale de la Recherche, (Blanc SIMI 1 2011 project CALIBRATION). We are grateful to  V. Bansaye, A. Dalalyan, A. Marguet, E. Moulines, A. Olivier, B. Perthame, V. Rivoirard  and V.C. Tran for helpful discussion and comments.}

\bibliographystyle{plain}       
\bibliography{biblio}           
\end{document}